\documentclass[12pt]{article}

\RequirePackage{kpfonts}
\RequirePackage[sf,bf,small,raggedright]{titlesec}
\usepackage{graphicx} 
\usepackage{amsmath,amssymb}
\usepackage{tikz} 
\usepackage{hyperref}
\usepackage{lipsum}
\usepackage{setspace}
\usepackage{bbm}
\usepackage{tikz}
\usetikzlibrary{shapes.geometric}
\usepackage{microtype}

\usetikzlibrary{shapes}
\usetikzlibrary{shapes,fit}
\usetikzlibrary{shapes.misc}
\usetikzlibrary{trees, shapes}
\usetikzlibrary{shapes, positioning}

\usepackage{amsthm}
\usepackage{diagbox}
\usepackage{dsfont}

\usepackage{listings}
\usepackage{bm}
\newtheorem{theorem}{Theorem}
\newtheorem{definition}[theorem]{Definition}

\newtheorem{corollary}[theorem]{Corollary}
\newtheorem{remark}{Remark}

\usepackage{graphicx} 
\newtheorem{lemma}[theorem]{Lemma}
\usepackage[backend=biber,style=numeric,sorting=nyt,maxnames=10]{biblatex} 
\usepackage{amsfonts} 
\usepackage{fancyhdr}

\newcommand{\Thref}[1]{\hyperref[#1]{\textbf{Th.~\ref*{#1}}}}
\newcommand{\corref}[1]{\hyperref[#1]{\textbf{Co.~\ref*{#1}}}}

\newcommand{\N}{\mathbb{N}}

\numberwithin{equation}{section}

\newcommand{\EXP}{\mathbb{E}}
\newcommand{\PROB}{\mathbb{P}}

\newcommand{\defeq}{\stackrel{\mathrm{def.}}{=}}

\begin{document}
\title{\bf{A study of centrality measures in random recursive trees}
 \thanks{Luc Devroye was supported by \textsc{nserc} grant \textsc{rgpin}-2024-04164. Richard Coll Josifov acknowledges the support of the Spanish Ministry of Science, Innovation and Universities through grant FPU23/01138. Gábor Lugosi acknowledges the support of the Spanish Ministry of Economy and Competitiveness grant PID2022138268NB-I00, financed by MCIN/AEI/10.13039/501100011033, FSE+MTM2015-67304-P, and FEDER, EU}

\author{Richard Coll Josifov \\ [-0.1em]
\small Department of Mathematics,\\ [-0.3em] \small Polytechnic University of Catalonia, Barcelona, Spain \\ [-0.3em] \small Department of Economics and Business,\\ [-0.3em] \small Pompeu Fabra University, Barcelona, Spain 
\and
  Luc Devroye \\ [-0.1em]
\small
  School of Computer Science, McGill University, \\ [-0.3em]
\small
  Montreal, Canada 
\and
G\'abor Lugosi \\ [-0.1em]
\small
Department of Economics and Business, \\ [-0.3em]
\small
Pompeu Fabra University, Barcelona, Spain \\ [-0.3em]
\small
ICREA, Pg. Lluís Companys 23, 08010 Barcelona, Spain \\ [-0.3em]
\small
Barcelona Graduate School of Economics
 }
}
\maketitle
\vspace{-2em}

\begin{abstract}
    We investigate the behaviour of five classical centrality measures—Jordan, rumor, betweenness, degree, and closeness centralities---in the setting of uniform random recursive trees. Motivated by applications in network archaeology, we focus on two fundamental questions: (i) the birth index (time of arrival) of the most central vertex, and (ii) the relative centrality of the root.

We quantify the probability that the root is the most central vertex, analyze its expected rank under each centrality measure, and determine the expected birth index of a central vertex. In addition, we characterize the typical size of the set of top-ranked vertices that contains the root with high probability. Finally, for each centrality notion, we study the persistence properties of the center and the asymptotic behaviour of the root’s rank.
\end{abstract}


\onehalfspacing

\footnotesize
\tableofcontents
\normalsize

\section{Introduction}

Centrality is a fundamental concept in network science, capturing the structural importance of vertices in complex networks. Over the years, a wide range of centrality measures has been proposed to quantify different aspects of vertex importance. For a comprehensive survey of centrality measures, see Saxena and Iyengar \cite{centralitiessurvey}.

This paper aims to provide a rigorous analysis of five prominent centrality measures in one of the most basic and canonical models of network growth: the \emph{uniform random recursive tree}. The measures we study are Jordan centrality, betweenness centrality, degree centrality, rumor centrality, and closeness centrality. Precise definitions of these notions are given below. Each of these measures induces a ranking of the vertices from most to least central; we refer to a vertex of maximum centrality as a \emph{center} of the network.

Our investigation is primarily motivated by questions arising in \emph{network archaeology} \cite{networkArchaeology}, a branch of combinatorial statistics concerned with inferring past properties of growing networks from present-day observations. A central problem in this area is \emph{root estimation}: identifying the initial vertex, or root, of a randomly grown network. This problem has been studied extensively for tree-valued models, including uniform attachment trees, preferential attachment trees, and related growth processes.

In this paper, we focus on the \emph{uniform random recursive tree} (\textsc{urrt}), also known as the \emph{uniform attachment tree}. This model generates a sequence of random labelled increasing trees ${T_n}$, where for each $n \ge 1$, the vertex set of $T_n$ is $V(T_n) = [n]$. The construction proceeds recursively: starting from a single vertex $1$, designated as the root, the tree $T_{n+1}$ is obtained from $T_n$ by adding a new vertex $n+1$ and connecting it by an edge ${i, n+1}$, where $i$ is chosen uniformly at random from $[n]$.

To define the centrality measures considered in the paper, we introduce some notation. Given a tree $T_n$ and  a vertex $v\in V(T_n)$, we write $(T_n,v)$ for the tree $T_n$ rooted at vertex $v$.
If $u\in V(T_n)$ is another vertex, then we write $(T_n,v)_{u\downarrow}$ for the subtree of $T_n$, rooted at vertex $u$, containing all vertices $w$ such that the path from $w$ to $v$ contains $u$.

    Given a tree $T_n$, and a vertex $v\in V(T_n)$, we define the following centrality measures:
    \begin{itemize}
    \renewcommand{\labelenumi}{(\roman{enumi})}
        \item \textsc{Jordan centrality}, first introduced by Jordan \cite{jordan}, ranks vertices according to the size of the largest component that results when the vertex is removed. Formally, vertices are ranked according to increasing values of the centrality measure $\psi_{T_n}(v) = \max_{u \in V(T_n)  \backslash v} |(T_n,v)_{u\downarrow}|$.
        \item \textsc{Betweenness centrality}, introduced by Freeman \cite{betweennessFreeman}, ranks vertices $v$ according to the number of vertex pairs for which the unique path between them passes through $v$.
More precisely, vertices are ranked according to decreasing values of the function
$\mathcal{B}_T(v) = \sum_{s,t \in V(T_n) : s,t \neq v}\mathbbm{1}_{v \in\sigma_{s,t}}$, where $\sigma_{s,t}$ is the path between vertices $s,t$ in $T_n$ excluding $s$ and $t$. 
        \item \textsc{Degree centrality} ranks vertices according to the degree $\deg_{T_n}(v)$ of vertex $v$ in $T_n$:
 $ d_{T_n}(v) = \deg_{T_n}(v)$ such that higher-degree vertices are more central.
        \item \textsc{Rumor centrality} was introduced by  Shah and Zaman  \cite{rumor} as the likelihood that a vertex is the source of a certain diffusion process on infinite regular trees. It quantifies the centrality of a vertex as the product
 $\varphi_{T_n}(v)= \prod_{u\in V(T_n) \backslash v}|(T_n,v)_{u\downarrow}|$. Smaller values of $\varphi_{T_n}(v)$ correspond to more central vertices.
        \item \textsc{Closeness centrality}, formally defined by Freeman \cite{freemanCloseness}, measures the centrality of a vertex by the sum of its distances to all other vertices: $\mathcal{C}_{T_n}(v)= \sum_{u \in V(T_n)  \backslash v} \text{dist}(v,u)$ where $\text{dist}(v,u)$ is the length of the path between $v$ and $u$ in $T_n$. Again, the smaller $\mathcal{C}_{T_n}(v)$, the more central $v$ is.
    \end{itemize}

 \begin{remark}
   For Jordan, rumor, and closeness centralities, smaller values of the centrality score correspond to higher centrality. In contrast, for degree and betweenness centralities, larger values indicate that a vertex is more central.
\end{remark}

\begin{remark}
The ranking of vertices induced by betweenness centrality $\mathcal{B}_T(v)$ is identical to the ranking produced by the measure:
\[ 
B'_T(v) = \sum_{u \in T_n : \{u,v\} \in E(T_n)} |(T_n,v)_{u\downarrow}|^2, 
\]
where a vertex is considered more central if $B'_T(v)$ is minimized. This measure can be viewed as a modification of Jordan centrality; rather than considering the size of the largest subtree, it computes the sum of squares of all subtree sizes.

This notion generalizes via the centrality measure $B^q_T(v)$ for a parameter $q \ge 2$:
\[ 
B^q_T(v) = \sum_{u \in T_n : \{u,v\} \in E(T_n)} |(T_n,v)_{u\downarrow}|^q. 
\]
The parameter $q$ allows for interpolation between betweenness centrality ($q=2$) and Jordan centrality ($q \to \infty$). While most results presented in this paper for betweenness centrality extend to general values of $q$, we focus on $q=2$ for the sake of readability.
\end{remark}

\subsection{Rank of the root and index of the center}

For any given centrality measure, one may order the vertices from most to least central.
In this paper, we focus on the behavior of two random variables: the \emph{rank of the root}
and the \emph{index of the center}. The rank of the root is the position of the root vertex in the
ranking. This is an important quantity for root finding methods: if the rank of the root is guaranteed to be
small with high probability, then the set of a few of the most central vertices contains the root with
high probability, giving a small \emph{confidence set} for root finding. An equally interesting---albeit less studied---notion for network archaeology
is the index (i.e., label) of the most central vertex. If the index of the center is small with high probability, then  
the centrality measure is effective in finding an early-arriving vertex.

To define these random variables rigorously, consider a centrality measure 
$\psi_{T_n}: V(T_n) \to \mathbb{N}$.
This defines a ranking of the vertices $\gamma_n:V(T_n) \to \{1,\ldots,n\}$ such that $\gamma_n(u) = 1 + |\{v\in T_n : v \text{ is more central than } u\}|$. We recall that vertices with the same centrality value have the same rank. 

When the centrality measure $\psi_{T_n}$ assigns equal values to different vertices, such vertices have the same
rank according to the definition above. To avoid technicalities arising from such ties,
we break ties so that each vertex is guaranteed to have a unique rank. Even though the tie-breaking
method has no consequence to our results, for concreteness, we consider the \emph{pessimistic} tie-breaking
rule. This rule orders vertices with equal centrality in reverse order of their insertion time.
Hence, if $\psi_{T_n}(u)=\psi_{T_n}(v)$ for some $u,v\in [n]$,
then $\gamma_n(u) < \gamma_n(v)$ if and only if
$u>v$.  Note that this rule ensures that $\left(\gamma_n(1),\ldots \gamma_n(n)\right)$ is a permutation of $[n]$.
In the rest of the paper, we implicitly assume this tie-breaking rule.

\begin{definition}
Given a centrality measure $\psi_{T_n}$, the \emph{center}  (or $\psi_{T_n}$-center) is the vertex $v\in [n]$ for which
$\gamma_n(v) =1$.

The \emph{index of the center} (i.e., the value of the integer $v$) is denoted by $I_n$.
Note that, thanks to tie-breaking, the center is always unique and well-defined.

The \emph{rank of the root} is the random variable $R_n=\gamma_n(1)$.
\end{definition}

\begin{remark}
  A standard notion is the \emph{centroid} of a tree $T_n$, defined as a vertex whose removal decomposes
  the tree into a forest whose largest component has size at most $n/2$. It is well known that
  every tree has at least one and at most two centroids. It is easy to see that a centroid is most central
  according to Jordan, closeness, and rumor centralities (see Section \ref{sec:prelim} for details).
  In other words, the Jordan center, closeness center, and rumor center coincide. In particular,
  the random variable $I_n$ is the same for these three centrality measures. At the same time, $R_n$
  behaves quite differently in these three cases.
\end{remark}

In this paper, we study various properties of the random variables $I_n$ and $R_n$.
In particular, we investigate the following quantities and properties:
\begin{itemize}
\item
  The probability $\PROB(R_n=1)=\PROB(I_n=1)$ that the root is the center. It was known since Moon \cite{MoonJordan} that the centroid (i.e., the Jordan, closeness, and rumor centers) is the root vertex with probability
  bounded away from zero. We show that this also holds for the betweenness center.
\item
  The expected values $\EXP R_n$ and $\EXP I_n$. We prove that in the case of rumor centrality, both
  quantities are of constant order. Interestingly, in all other cases, $\EXP I_n$ is of a smaller order of magnitude
  than $\EXP R_n$, indicating that, even though some centrality measures fail to assign a small rank to the root vertex,
  the most central vertex has a small expected index.
\item
  We say that a centrality measure is \emph{good for root estimation} if the sequence of random variables
  $\{R_n\}$ is tight, that is,  for every $\epsilon>0$ there
  exists a positive integer $K$ such that for all $n$, $\PROB\{R_n \le K\} \ge 1-\epsilon$.
  If a centrality measure is good for root estimation, then the set of vertices $\{v\in V(T_n): \gamma_n(v) \le K\}$
  is a \emph{confidence set} that contains the root with probability at least $1-\epsilon$ and has bounded size,
  independently of the size $n$ of the tree.  We show that Jordan, rumor, and betweenness centralities are good
  for root estimation, while closeness and degree centralities are not.
  When a centrality measure is good for root estimation, understanding the magnitude of $K$ that guarantees $\PROB\{R_n \le K\} \ge 1-\epsilon$ offers a way of comparing such centrality measures. For each of these centrality measures, we establish matching upper and lower bounds for the order of magnitude of $K$, as a function of $\epsilon$.
  Analogously, we may study the tightness of the sequence $\{I_n\}$. It follows from the boundedness of $\EXP I_n$ that
  the index of the center is tight for all centrality measures, with the only exception of degree centrality.
\item
  We also study \emph{persistence of the center }and \emph{persistence of the rank of the root}.
  Persistence means that $R_n$ and $I_n$ stabilize eventually, almost surely.
  Formally, for a given centrality measure, we say that the center is persistent if there exist integer-valued random
  variables $N,K$ with $\PROB\{N<\infty\}=\PROB\{K<\infty\}=1$, such that $I_n = K$ for all $n\ge N$.
  Similarly, the rank of the root is persistent if there exist integer-valued random
  variables $N,K$ with $\PROB\{N<\infty\}=\PROB\{K<\infty\}=1$, such that $R_n = K$ for all $n\ge N$.
  For each of the five centrality measures, we determine whether $I_n$ and $R_n$ are persistent or not.
\end{itemize}

The results of the paper are summarized in the tables of Section \ref{sec:results}. Naturally, not all of
these results are new. Previous work is surveyed in Section \ref{sec:related}.

\subsection{Related work}
\label{sec:related}


The study of root estimation in uniform random recursive trees (\textsc{urrt}) was pioneered by Haigh \cite{firstpapernetworkarcheology}, who demonstrated that the maximum likelihood estimator (\textsc{mle}) of the root is the tree's centroid. Haigh further showed that the probability of the \textsc{mle} correctly identifying the root converges to $1-\ln 2$ as the tree size $n \to \infty$. A parallel result was established by Shah and Zaman \cite{rumor} for rumor diffusion processes on infinite regular trees.

Building on these foundations, Bubeck et al. \cite{Bubeck2017} introduced the concept of \emph{confidence sets}---sets of vertices guaranteed to contain the root with high probability. They proved that for any $\epsilon > 0$, there exists a confidence set whose size depends solely on $\epsilon$ (independent of $n$) that contains the root with probability at least $1-\epsilon$. Crucially, they showed that such sets can be constructed using the most central vertices according to Jordan or rumor centralities—two of the five measures analyzed in the present work.

Recent advancements have further refined these bounds. Addario-Berry et al. \cite{addario2025leaf, addarioRumour} utilized a ``leaf-stripping'' method to provide bounded-size confidence sets and obtained upper bounds of the optimal order for both uniform attachment trees and diffusions in regular trees. For preferential attachment models, optimal upper bounds were established by Contat et al. \cite{contatEveAdam}.

\medskip\noindent\textsc{Properties of the centroid and betweenness.}
The statistical properties of the centroid were first investigated by Moon \cite{MoonJordan}, who derived the limiting probability $\lim_{n \to \infty} \PROB(I_n=1) = 1-\ln 2$ and provided exact formulas for the expected index $\EXP I_n$ and the expected distance between the centroid and the root. These results were later extended by Durant and Wagner \cite{durant2019centroid}, who obtained distributional results for the index and depth of the centroid in ``very simple increasing trees.'' In a separate comprehensive study, Durant and Wagner \cite{distributionBetweenness} explored betweenness centrality in \textsc{urrt}s and simply generated trees, characterizing its $k$-th moments, limiting distributions, and the convergence behavior of the betweenness center.

\medskip\noindent\textsc{Center coincidence and persistence.}
The relationship between different centrality measures is also well-documented; for instance, Zelinka \cite{propertiesCloseness} proved that Jordan and closeness centers coincide in any tree. A significant body of work has focused on the \emph{persistence} of these centers—the property that the ranking of the most central nodes stabilizes as the tree grows. Jog and Loh \cite{persistanceJogLoh, sublinearPA} and Banerjee and Bhamidi \cite{BanerjeeBhamidi2022} established strong persistence for Jordan centers across various models, including uniform, sublinear, and general attachment trees. Similarly, the persistence of the degree center has been studied for preferential attachment models \cite{Dereich2009, Galashin2016, BanerjeeBhamidiHubs, banerjee2023degree, contatEveAdam} and Crump-Mode-Jagers branching processes \cite{tyerCMJPersistance}.

\medskip\noindent\textsc{Alternative models and variations.}
Beyond simple trees, root estimation has been explored from various vantage points:
\begin{itemize}
    \item \textsc{Complex Networks:} Galton-Watson trees \cite{brandenberger2022root}, random recursive \textsc{dag}s and Cooper-Frieze networks \cite{briend2022archaeology}, and random nearest neighbor trees \cite{randomnearestneighbour}.
    \item \textsc{Computational Complexity:} Work by Borgs et al. \cite{borgs2012local} and Frieze and Pegden \cite{frieze_pegden_2017} shifted the focus toward the algorithmic efficiency of root identification.
    \item \textsc{Arrival Order:} Rather than identifying only the root, Crane and Xu \cite{crane2023} and Briend et al. \cite{briendhistory} developed methods to estimate the full arrival order of vertices in attachment models.
    \item \textsc{Seeded Models:} Investigations into finding the initial ``seed'' tree in evolving \textsc{urrt}s can be found in \cite{curien2015scaling, Bubeck2017, Reddad2019, lugosi2021finding}.
\end{itemize}

\subsection{Notation}\label{notation}
We write $N_n(v)=\{u\in V(T_n) \backslash v: \{u,v\} \in E(T_n)\}$ for the neighborhood of a vertex and write $T\backslash v$ to denote the forest obtained by removing vertex $v$ from $T$. $V(T)\backslash v$ denotes the vertex set after removing vertex $v$. 

Throughout this paper, we label the vertices in two ways. One is based on their arrival times in the tree (the root is vertex $1$, the next vertex is $2$, and so on). The other is based on an embedding in an \emph{Ulam-Harris tree} $\mathcal{U}$ (see \cite{Neveu}). This tree is the infinite ordered rooted tree with node set $\cup_{j=0}^{\infty}\mathbb{N}^j$, with the root as $\emptyset$ and vertices on level $j$ as $n_1\ldots n_j$ and edges from $n_1\ldots n_j$ to $n_1\ldots n_jn$ for all $j\ge 0$ and $n\in\mathbb{N}$. We define the \emph{zone} of a vertex $v=n_1\ldots n_j$, $v\neq \emptyset$ as $z(v)=\sum_{i=1}^j n_i$. 

We write $X\sim D$ to denote that the random variable $X$ is distributed as $D$.

\subsection{Results}
\label{sec:results}
The following tables summarize the known and new results. The first table presents the results for the rank of the root, whereas the second presents the results for the center's index.

\begin{table}[ht!]
    \centering
    \small
    \begin{tabular}{|c|c|c|c|c|}
    \hline
        &&&& \\
          & \begin{tabular}{@{}c@{}}$\displaystyle\lim_{n\to\infty}\PROB(R_n=1)$ \\$= \displaystyle \lim_{n\to\infty}\PROB(I_n=1)$\end{tabular}  & $\EXP R_n$  & \begin{tabular}{@{}c@{}}Good for root \\ estimation\end{tabular}  & \begin{tabular}{@{}c@{}}Persistence \\ of rank of\\ root\end{tabular}   \\ 
          &&&& \\
          \hline
          &&&& \\
         Jordan &  $1-\log(2)$ (\cite{firstpapernetworkarcheology}, \cite{MoonJordan})  & $\Theta (\log(n))$ (Cor. \ref{JordanExpected}) & $\checkmark$ (\cite{findingadam}) & yes (Th. \ref{persistencerankrootJordan}) \\ 
         &&&& \\
         \hline
         &&&& \\
         closeness & $1-\log(2)$ (\cite{firstpapernetworkarcheology},\cite{MoonJordan})  &$ e^{\Omega\left(\log^{1 \over 3} (n)\right)}$ (Th. \ref{ClosenessExpected}) & $\times$ (Th. \ref{ClosenessRootEstimation})  &  no (Th. \ref{ClosenessPersistence}) \\ 
         &&&& \\
         \hline
         &&&& \\
          rumor & $1-\log(2)$ (\cite{firstpapernetworkarcheology},\cite{MoonJordan}) & $\mathcal{O} (1)$ (\cite{findingadam}) & $\checkmark$ (\cite{findingadam}) &  yes (Th. \ref{rumorPersistence}) \\ 
          &&&& \\
         \hline
         &&&& \\
         betweenness & $> 0.07$ (Th. \ref{probabilityBetweenness}) & $\Theta (\log(n))$ (Th. \ref{BetweennessExpected})  & $\checkmark$ (Cor. \ref{betweennessconfidenceroot}) &  yes (Th. \ref{BetweennessPersistenceRoot}) \\ 
         &&&& \\
        \hline
        &&&& \\
        degree &  o(1) (\cite{eslavanumverticesdegree}) & $\Omega(n^{1-\log(2)})$ (\cite{eslavanumverticesdegree}) & $\times$ (\cite{eslavanumverticesdegree}) & no (\cite{eslavanumverticesdegree}) \\ 
        &&&& \\\hline
    \end{tabular}
    \label{mainresults}
\end{table}

\begin{table}[ht!]
    \centering
    \small
    \begin{tabular}{|c|c|c|c|}
    \hline
        &&& \\
           & $\EXP I_n$  & Tightness of $I_n$ & \begin{tabular}{@{}c@{}}Persistence \\ of center\end{tabular}    \\ 
          &&& \\
          \hline &&& \\
         \begin{tabular}
         {@{}c@{}}Jordan, \\ closeness \end{tabular}  & $<5/2+\mathcal{O} (n^{-1})$ (\cite{MoonJordan}) & yes (\cite{MoonJordan})  & yes (\cite{persistanceJogLoh})  \\ 
         &&& \\
         \hline
         &&& \\
         rumor  & $<5/2+\mathcal{O} (n^{-1})$ (\cite{MoonJordan})  & yes (\cite{MoonJordan}) & yes (\cite{persistanceJogLoh}) \\ 
         &&& \\
         \hline
         &&& \\
        betweenness  & $< 20$ (\cite{distributionBetweenness})  &  yes (\cite{distributionBetweenness})  &  yes (Th. \ref{BetweennessPersistenceCentroid})  \\
        &&& \\
        \hline
        &&& \\
         degree  & $\Omega \left(n^{1-{1 \over 2\log(2)}}\right)$ (\cite{BanerjeeBhamidiHubs}) & no (\cite{eslavadegree}) & no (\cite{eslavadegree})\\ 
         &&& \\
         \hline
    \end{tabular}
    \label{mainresults2}
\end{table}

In Section \ref{sec:prelim} we present some basic properties of the centrality measures. In Sections \ref{sec:jordan}--\ref{sec:closeness}, we prove the main results regarding the five centrality measures.

 \section{Preliminaries}
 \label{sec:prelim}
 
 In this section, we discuss some basic properties of the centrality measures. We denote the centrality measure by $\psi_T(v)$, where the specific centrality measure is clear from context.
 The first lemma gathers some properties of Jordan centrality. See Jordan \cite{jordan},
 Jog and Loh \cite[Lemma 13]{sublinearPA} and
 \cite[Lemma 2.1]{persistanceJogLoh}.
 
\begin{lemma} \label{propertiesJordan}
    Let $T_n$ be a tree on n vertices. Then Jordan centrality satisfies the following properties.
    \begin{enumerate}\renewcommand{\labelenumi}{(\roman{enumi})}
        \item for any two vertices $v,u\in T_n$ we have 
        \begin{equation*}
            \psi_T(u) \le \psi_T(v) \iff |(T_n,v)_{u\downarrow} | \geq |(T_n,u)_{v\downarrow} |~.
        \end{equation*}

        \item  $T_n$ can have at most two Jordan centers. If there are two, they must be neighbors. 
        \item If $v^*$ is a Jordan center, then $\psi_T(v^*) \leq \frac{n}{2}$.

    \end{enumerate}
  \end{lemma}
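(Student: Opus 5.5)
The plan is to reduce everything to one structural observation about a pair of vertices and the path between them. Fix $u\neq v$ and let $v=w_0,w_1,\dots,w_k=u$ be the path from $v$ to $u$. Removing the edge $\{v,w_1\}$ splits $T_n$ into the component $(T_n,v)_{w_1\downarrow}$, which contains $u$, and its complement $(T_n,w_1)_{v\downarrow}$. For every $x$ on the path, I will first show the monotonicity fact
\[
|(T_n,v)_{w_1\downarrow}|\ \ge\ |(T_n,v)_{u\downarrow}| \quad\text{and}\quad |(T_n,u)_{v\downarrow}| \ \ge\ |(T_n,w_1)_{v\downarrow}|,
\]
together with the complementarity identity $|(T_n,v)_{w\downarrow}|+|(T_n,w)_{v\downarrow}|=n$ for adjacent $v,w$.

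For (i), I first treat adjacent $u,v$. Set $a=|(T_n,v)_{u\downarrow}|$ and $b=|(T_n,u)_{v\downarrow}|=n-a$. Every component of $T_n\setminus v$ other than the one containing $u$ lies inside the $v$-side of the edge $\{u,v\}$, so it has size at most $b-1<b$. The component containing $u$ has size $a$. Hence $\psi(v)=\max(a,c)$ with $c<b$, and symmetrically $\psi(u)=\max(b,c')$ with $c'<a$.

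\begin{itemize}
\item If $a\ge b$, then $\psi(v)=a$ and $\psi(u)\le\max(b,a-1)\le a$.
\item If $a<b$, then $\psi(u)=b>a$, while $\psi(v)\le\max(a,b-1)<b$. So $\psi(u)>\psi(v)$.
\end{itemize}

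This gives the equivalence for neighbours. For general $u,v$ the subtree sizes no longer sum to $n$, so the comparison needs more care. I will handle it by comparing $\psi(v)$ with $|(T_n,v)_{u\downarrow}|$ through the branch at $w_1$, and then using the adjacent case step by step along the path, exploiting the monotonicity of the branch sizes. This is the main obstacle. One might also read (i) as an iff only for adjacent vertices (that is how it is used in Jog--Loh), and I would check that reading. I will try to prove the general version by showing that $\psi$ restricted to a path is unimodal: along the path, the branch size toward $u$ decreases and the branch size toward $v$ increases.

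For (iii), I take a vertex $v$ whose largest component has size greater than $n/2$ and move to its neighbour $w$ in that component. By the adjacent analysis with $a>n/2>b$, $\psi(w)\le\max(b,a-1)<a=\psi(v)$. So a minimiser cannot have $\psi>n/2$, and since a minimiser exists, $\psi(v^*)\le n/2$.

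For (ii), suppose $v^*,u^*$ are both centres. Let $w$ be the neighbour of $v^*$ on the path toward $u^*$. Since $u^*$ lies in $(T_n,v^*)_{w\downarrow}$, we have $n-\psi(v^*)\le |(T_n,u^*)_{v^*\downarrow}|\le \psi(u^*)$. Because $\psi(u^*)=\psi(v^*)$, this gives $\psi(v^*)\ge n/2$, and combined with (iii) both values equal exactly $n/2$. The inequalities are then tight. Tightness forces $(T_n,u^*)_{v^*\downarrow}=(T_n,w)_{v^*\downarrow}$, which means $u^*=w$ is adjacent to $v^*$. Finally, any third centre would have to be adjacent to both, which is impossible in a tree. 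So there are at most two centres, and if there are two they are neighbours.
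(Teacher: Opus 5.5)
There is a genuine gap in part (i), and a smaller error in (ii). Your adjacent-pair analysis, your proof of (iii), and the bound $\psi(v^*)\ge n/2$ in (ii) are correct. The gap is that (i) is only proved for neighbours; for general $u,v$ you give a plan, and that plan would not close. The statement covers arbitrary pairs, and the paper relies on this: the persistence section uses $\psi_n(v)\le\psi_n(1)\iff|(T_n,1)_{v\downarrow}|\ge|(T_n,v)_{1\downarrow}|$ for an arbitrary vertex $v$. So falling back to the adjacent reading is not an option.

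Chaining the adjacent case along the path $v=w_0,\dots,w_k=u$ only shows that $\psi$ on the path is non-increasing and then increasing. Such a valley shape says nothing about which endpoint has the larger value. The paper gives no proof of this lemma (it cites Jordan and Jog--Loh), but a direct argument is short, because your adjacent computation goes through verbatim once you use the right quantities.

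Let $\alpha=|(T_n,v)_{u\downarrow}|$, $\beta=|(T_n,u)_{v\downarrow}|$ and $m=n-\alpha-\beta\ge 0$. Here $m$ counts the vertices whose closest path vertex is one of $w_1,\dots,w_{k-1}$, so $m=0$ if and only if $u,v$ are neighbours. The component of $T_n\setminus v$ containing $u$ is $(T_n,v)_{w_1\downarrow}$, of size $\alpha+m$. All other components lie in $(T_n,u)_{v\downarrow}\setminus\{v\}$. Hence $\psi(v)=\max(\alpha+m,c)$ with $c\le\beta-1$, and symmetrically $\psi(u)=\max(\beta+m,c')$ with $c'\le\alpha-1$.
\begin{itemize}
\item If $\alpha\ge\beta$, then $\psi(u)\le\max(\beta+m,\alpha-1)\le\alpha+m\le\psi(v)$.
\item If $\alpha<\beta$, then $\psi(v)\le\max(\alpha+m,\beta-1)<\beta+m\le\psi(u)$.
\end{itemize}
This is exactly your computation with $a,b$ replaced by $\alpha+m,\beta+m$; no unimodality is needed.

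The error in (ii) is in how you get adjacency from tightness. Under the paper's definition, $(T_n,u^*)_{v^*\downarrow}$ is the set of vertices whose path to $u^*$ passes through $v^*$. This set always equals $(T_n,w)_{v^*\downarrow}$, where $w$ is the neighbour of $v^*$ toward $u^*$. Likewise, the second inequality of your ``monotonicity fact'' is always an equality. So the identity you extract from tightness does not force $u^*=w$. You are conflating $(T_n,u^*)_{v^*\downarrow}$ with the component of $T_n\setminus u^*$ containing $v^*$, and these agree only for neighbours.

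The strictness actually comes from your second inequality. If $u^*$ is not adjacent to $v^*$, that component also contains $w$, so $|(T_n,u^*)_{v^*\downarrow}|<\psi(u^*)$. This gives $\psi(v^*)>n/2$, contradicting (iii). Equivalently, in the notation above, $\psi(v^*)\ge n-\beta$, $\psi(u^*)\ge n-\alpha$ and (iii) give $\alpha,\beta\ge n/2$, hence $m=0$. Your step ruling out a third centre is fine.
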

  
  Zelinka \cite{propertiesCloseness} observed that the Jordan and closeness centers coincide with the centroid. We show that the rumour center is also the centroid of the tree.
  Some of the following properties were established for the closeness centrality by Zelinka \cite{propertiesCloseness}.

\begin{lemma}\label{propertiesClosenessrumor}
    Let $T_n$ be a tree on $n$ vertices, and let $\psi_T(v)$ denote either closeness or rumor centrality. Then the following holds.
    \begin{enumerate}\renewcommand{\labelenumi}{(\roman{enumi})}
        \item For any two vertices $v,u \in T_n$ such that $\{u,v\} \in E(T_n)$, 
    \begin{equation*}
        \psi_T(u) \le \psi_T(v) \iff |(T_n,v)_{u\downarrow} | \geq |(T_n,u)_{v\downarrow} |~.
    \end{equation*}
    \item The vertex or pair of vertices most central according to the Jordan centrality, rumor centrality and closeness centrality coincide. 
    \item (Monotonicity) In any path rooted at a centroid $v_1$, of the form $(v_1,v_2,\ldots,v_{\ell})$, we have $\psi_T(v_i) < \psi_T(v_{i+1})$ for $2\leq i \leq \ell-1$ and $\psi_T(v_1) \leq \psi_T(v_{2})$ with equality only when $v_1,v_2$ are the two unique centroids. 
    \end{enumerate}
\end{lemma}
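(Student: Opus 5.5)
The whole argument rests on comparing the centrality values of two adjacent vertices $u,v$. Write $a=|(T_n,v)_{u\downarrow}|$ and $b=|(T_n,u)_{v\downarrow}|$. Removing the edge $\{u,v\}$ splits $T_n$ into the component containing $u$ (size $a$) and the component containing $v$ (size $b$), so $a+b=n$.

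\textbf{Part (i), closeness.} Every vertex on $u$'s side is one step closer to $u$ than to $v$. Every vertex on $v$'s side is one step closer to $v$. Hence
\[
\mathcal{C}_{T_n}(v)-\mathcal{C}_{T_n}(u)=a-b ,
\]
and $\mathcal{C}_{T_n}(u)\le \mathcal{C}_{T_n}(v)$ if and only if $a\ge b$.

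\textbf{Part (i), rumor.} Compare the subtree sizes when rooting at $v$ versus at $u$. For every vertex $w\notin\{u,v\}$, the subtree $(T_n,v)_{w\downarrow}$ coincides with $(T_n,u)_{w\downarrow}$, because the path from $w$ to $u$ and the path from $w$ to $v$ agree up to the edge $\{u,v\}$. The only factors that differ are the following. In $\varphi_{T_n}(v)$ the vertex $u$ contributes the factor $a$. In $\varphi_{T_n}(u)$ the vertex $v$ contributes the factor $b$. Both products omit their own root. Therefore
\[
\frac{\varphi_{T_n}(v)}{\varphi_{T_n}(u)}=\frac{a}{b},
\]
which gives the same equivalence.

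\textbf{Part (iii), monotonicity.} Take a path $v_1,\dots,v_\ell$ starting at a centroid $v_1$. For $i\ge 1$ the component containing $v_{i+1}$ after deleting the edge $\{v_i,v_{i+1}\}$ is $(T_n,v_i)_{v_{i+1}\downarrow}$. This is a component of $T_n\setminus v_1$, or is contained in one. Since $v_1$ is a centroid, it has size $b_i\le n/2$, so the complementary side has size $a_i=n-b_i\ge n/2$.

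For $i\ge 2$ the inequality is strict. The components along the path are nested and shrink, so
\[
b_i=|(T_n,v_i)_{v_{i+1}\downarrow}|<|(T_n,v_1)_{v_2\downarrow}|\le n/2 .
\]
Hence $a_i>b_i$, and the strict form of the computation in (i) gives $\psi_T(v_i)<\psi_T(v_{i+1})$.

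For $i=1$ we only get $a_1\ge b_1$, so $\psi_T(v_1)\le\psi_T(v_2)$. Equality holds exactly when $b_1=n/2$. In that case $v_2$ is also a centroid, and by Lemma~\ref{propertiesJordan}(ii) the two centroids are $v_1,v_2$.

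\textbf{Part (ii).} Any vertex is joined to a centroid by a path. Applying (iii) along that path shows that every non-centroid is strictly less central, for both closeness and rumor, than the centroid(s). When there are two centroids, they have equal value by (iii). So the most central vertex or vertices are exactly the centroid(s).

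It remains to see that the centroids are also the Jordan centers. Lemma~\ref{propertiesJordan}(iii) gives $\psi_T(v^*)\le n/2$ for a Jordan center $v^*$, so every Jordan center is a centroid. Conversely, a centroid has $\psi\le n/2$. Any other vertex $w$ has a component containing the centroid of size $\ge n/2$, with equality only when $w$ is the second centroid. Hence the minimizers coincide.

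The only delicate step is the tie and equality bookkeeping: strictness for $i\ge 2$, and the equality case in (iii). This is handled by the nested strict decrease of component sizes along the path.
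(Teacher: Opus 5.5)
Your proof is correct and follows essentially the paper's route: (i) rests on the observation that for adjacent $u,v$ only the single term or factor $|(T_n,v)_{u\downarrow}|$ versus $|(T_n,u)_{v\downarrow}|$ differs, and (iii) on the nesting of the subtrees along a path from the centroid. The paper obtains the closeness case of (i) from the identity $\mathcal{C}_{T_n}(v)=\sum_{u\neq v}|(T_n,v)_{u\downarrow}|$ rather than your distance-shift count. The only structural difference is the order: you prove (iii) directly and deduce (ii) from it, which gives both inclusions and the tie between two centroids cleanly, whereas the paper proves (ii) first (a closeness or rumor center must be a centroid) and uses it for the equality case in (iii).
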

\begin{proof}
    (i) follows directly from the definition of the centrality measures. For the closeness centrality, note that  
    \begin{equation*}
        \mathcal{C}_{T_n}(v) = \sum_{u \in V(T_n)  \backslash v} \text{dist}(v,u) =  \sum_{u \in V(T_n)  \backslash v} |(T_n,v)_{u\downarrow}|~,
    \end{equation*}
    implying property (i). 
    (ii) By (i) and the definition of centroid, if $v$ is a closeness or rumor center, then $|(T_n,v)_{u\downarrow}| \le \frac{n}{2}$ for all $u \in N_n(v)$, and there can be at most two vertices that fulfill this condition, which by Lemma \ref{propertiesJordan}  are the Jordan centers. Thus, the Jordan, closeness and rumor centers coincide. 
    
    (iii) Consider a path $(v_1,v_2,\ldots,v_{\ell})$ away from a centroid. The first vertex must have, by the definition of centroid, $\mathcal{C}_{T_n}(v_1) \leq \mathcal{C}_{T_n}(v_{2})$, and by (ii), this is strict unless they are the pair of centroids. Consider now a vertex $v_{i+1}$ in the path. Assuming the induction hypothesis, $|(T_n,v_{i-1})_{v_i\downarrow}| < |(T_n,v_{i})_{v_{i-1}\downarrow}|$ and as $(T_n,v_{i})_{v_{i+1}\downarrow} \subset (T_n,v_{i-1})_{v_i\downarrow} $ as well as $(T_n,v_{i})_{v_{i-1}\downarrow} \subset (T_n,v_{i+1})_{v_{i}\downarrow}$, we have $\mathcal{C}_{T_n}(v_i)<\mathcal{C}_{T_n}(v_{i+1})$ for all $2\leq i \leq \ell-1$. 
\end{proof}

We recall a result by Jog and Loh \cite{persistanceJogLoh}.
\begin{lemma} {(Lemma 3.2 in \cite{persistanceJogLoh})} \label{twoverticesJordan}
For any two distinct vertices $u,v\in \cup_{n=1}^{\infty} V(T_n)$, let $\mathcal{S}_{\psi}=\{n : \psi_{T_n}(v) = \psi_{T_n} (u)\}$, for $\psi_{T_n}(v)$ the Jordan centrality of vertex $v$ in $T_n$. Then, $|\mathcal{S}_{\psi}| <\infty$ with probability 1. 
\end{lemma}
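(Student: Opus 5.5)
We reduce the statement to the behaviour of two subtree sizes and then use the Pólya urn description of the \textsc{urrt}. Fix distinct vertices $u,v$ and consider $n\ge \max(u,v)$. Let $w$ be the neighbour of $v$ on the path from $v$ to $u$, and $w'$ the neighbour of $u$ on the path from $u$ to $v$; possibly $w=u$ and $w'=v$. Deleting the edges of the path between $u$ and $v$ splits $T_n$ into pieces. Write $A_n=|(T_n,u)_{v\downarrow}|$ for the set of vertices closer to $v$ than to $u$, $B_n=|(T_n,v)_{u\downarrow}|$ for the set of vertices closer to $u$, and $M_n=n-A_n-B_n$ for the vertices hanging off interior path vertices. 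Both $\psi_{T_n}(u)$ and $\psi_{T_n}(v)$ are maxima over finitely many subtree sizes. The branches of $u$ other than the one containing $v$ are the branches of $u$ inside the $B_n$-part, and the branch of $u$ containing $v$ has size $A_n+M_n$. The analogous description holds for $v$. The event $\psi_{T_n}(u)=\psi_{T_n}(v)$ therefore forces one of a small number of coincidences.

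The first step is to show that, almost surely for all large $n$, both maxima are attained by the branch pointing toward the other vertex, so that $\psi_{T_n}(u)=A_n+M_n$ and $\psi_{T_n}(v)=B_n+M_n$. By the urn (Pólya) representation, the vector of sizes of the finitely many components of $T_m\setminus\{\text{path between }u,v\}$, with each fixed subtree split at its time of creation, has a Dirichlet a.s.\ limit after normalisation by $n$. A branch of $u$ not containing $v$, however, may be created at any later time. To control these, I would instead compare the branch of $u$ toward $v$ with the whole remaining mass on $u$'s side. If $A_n+M_n>B_n$ then $\psi(u)=A_n+M_n$, since every other branch of $u$ is contained in the $B_n$-part. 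The limits $A_n/n\to\alpha$, $B_n/n\to\beta$, $M_n/n\to\mu$ exist a.s. and have a continuous (Dirichlet-type) joint law. Hence a.s.\ $\alpha\neq\beta$; say $\alpha>\beta$.

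The second step uses this ordering. If $\alpha>\beta$, then for large $n$ we have $A_n+M_n>B_n+M_n$. Also $\psi(u)\ge A_n+M_n$, while every branch of $v$ has size at most $\max(B_n+M_n,A_n)$. The inequality $A_n<A_n+M_n$ is strict whenever $M_n\ge1$; when $u,v$ are adjacent, $M_n=0$ and we instead need $A_n<B_n$ ultimately, which fails here. So I would treat this more carefully by simply noting $\psi(v)\le\max(B_n+M_n,\max\text{branch of }v\text{ inside }A\text{-part})$. The branch of $v$ inside the $A$-part has size strictly less than $A_n$ (it excludes $v$ itself), so it is below $A_n+M_n$. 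Together with $B_n+M_n<A_n+M_n$, this gives $\psi(v)<\psi(u)$ for all large $n$, and hence $\psi(u)\ne\psi(v)$ eventually.

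The main obstacle is justifying that the joint limit $(\alpha,\beta)$ satisfies $\alpha\neq\beta$ a.s. I would argue by conditioning on $T_N$ for $N=\max(u,v)$. From then on, growth of the three groups $A,B,M$ is a three-colour Pólya urn with initial counts $(a,b,c)$, where $a,b\ge1$. Its normalised limit is therefore $\mathrm{Dirichlet}(a,b,c)$ (Beta when $c=0$), which is absolutely continuous, so the event $\{\alpha=\beta\}$ has probability zero. Summing over the countably many possible $T_N$ completes the argument.
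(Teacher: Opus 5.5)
Your argument is correct and follows essentially the route the paper relies on. The paper cites Jog and Loh for this lemma and runs the same argument for adjacent vertices in Lemma~\ref{finitechangeneighbors}. The comparison of Lemma~\ref{propertiesJordan}(i), which you effectively re-derive, shows that $\psi_{T_n}(u)=\psi_{T_n}(v)$ exactly when $|(T_n,u)_{v\downarrow}|=|(T_n,v)_{u\downarrow}|$. The three-colour Pólya urn started from $T_{\max(u,v)}$, with its absolutely continuous Dirichlet limit, then rules this out for all large $n$.

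One correction: the claim in your first step is false in general. You say both maxima are eventually attained by the branches pointing toward each other, but the vertex on the heavier side can, with positive probability, have its largest branch inside its own side. Your second step never uses that claim, so the proof stands.
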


We now prove that, for both rumor and closeness centrality, the number of times at which two neighboring vertices have equal centrality is almost surely finite.

\begin{lemma}\label{finitechangeneighbors}
  For any two distinct neighbouring vertices $u,v\in \cup_{n=1}^{\infty} V(T_n)$, let $\mathcal{S}_{\mathcal{C}}=\{n : \mathcal{C}_{T_n}(v) = \mathcal{C}_{T_n} (u)\}$, where $\mathcal{C}_{T_n}(v)$ is the closeness centrality of vertex $v$ in $T_n$.
  Also, let $\mathcal{S}_{\varphi}=\{n : \varphi_{T_n}(v) = \varphi_{T_n} (u)\}$, where $\varphi_{T_n}(v)$ is the rumor centrality of vertex $v$ in $T_n$. Then, $|\mathcal{S}_{\mathcal{C}}| <\infty$ with probability 1, and $|\mathcal{S}_{\varphi}| <\infty$ with probability 1. 
\end{lemma}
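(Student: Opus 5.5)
For neighbouring $u,v$, removing the edge $\{u,v\}$ splits $T_n$ into two components. Write $A_n=|(T_n,v)_{u\downarrow}|$ for the size of the side containing $u$ and $B_n=|(T_n,u)_{v\downarrow}|=n-A_n$ for the side containing $v$. The plan has two steps. First, reduce both equalities $\mathcal{C}_{T_n}(u)=\mathcal{C}_{T_n}(v)$ and $\varphi_{T_n}(u)=\varphi_{T_n}(v)$ to the single event $A_n=B_n$. Second, show that $A_n=B_n$ happens only finitely often almost surely, using the Pólya urn structure of the uniform random recursive tree.

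For the reduction, note that for every $w\neq u,v$ the subtree $(T_n,u)_{w\downarrow}$ coincides with $(T_n,v)_{w\downarrow}$. For closeness, use the identity $\mathcal{C}_{T_n}(v)=\sum_{w\neq v}|(T_n,v)_{w\downarrow}|$ from the proof of Lemma~\ref{propertiesClosenessrumor}. Subtracting the two sums, all terms with $w\neq u,v$ cancel, leaving
\[
\mathcal{C}_{T_n}(v)-\mathcal{C}_{T_n}(u)=A_n-B_n .
\]
For rumor centrality the same cancellation in the product gives
\[
\frac{\varphi_{T_n}(v)}{\varphi_{T_n}(u)}=\frac{A_n}{B_n}.
\]
Hence $\mathcal{S}_{\mathcal{C}}=\mathcal{S}_{\varphi}=\{n: A_n=B_n\}=\{n : A_n=n/2\}$. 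This is just Lemma~\ref{propertiesClosenessrumor}(i) made quantitative.

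For the second step, fix the time $m=\max(u,v)$ at which the edge $\{u,v\}$ appears. After time $m$, each new vertex attaches to the $u$-side with probability $A_n/n$ and to the $v$-side otherwise. So $(A_n,B_n)_{n\ge m}$ is a standard Pólya urn started from $(A_m,B_m)$ with $A_m+B_m=m$, where at time $m$ the smaller-labelled vertex's side has size $m-1$ and the other side has size $1$. Consequently $A_n/n\to\beta$ almost surely, where $\beta\sim\mathrm{Beta}(A_m,B_m)$. Since this limit law is absolutely continuous, $\PROB(\beta=1/2)=0$. On the event $\beta\neq 1/2$, we have $|A_n/n-1/2|$ bounded away from $0$ for all large $n$, so $A_n=n/2$ holds only finitely often. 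Therefore $|\mathcal{S}_{\mathcal{C}}|=|\mathcal{S}_\varphi|<\infty$ almost surely.

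The main point to handle carefully is that $u,v$ are fixed labels but the edge $\{u,v\}$ may or may not exist. The statement only concerns pairs that are neighbours, and since no edges are ever removed, neighbourhood is decided at time $m$. We condition on that event and on $\mathcal{F}_m$, apply the urn argument conditionally, and then integrate out. Everything else is routine: the convergence of Pólya urns and the Beta limit, or alternatively Lemma~\ref{twoverticesJordan}-style reasoning, are classical.
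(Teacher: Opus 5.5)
Your proposal is correct and follows the paper's own route. You reduce equality of the closeness or rumor centralities of the two neighbours to equality $A_n=B_n$ of the two sides of the edge $\{u,v\}$. You then use that these sides evolve as a two-colour P\'olya urn started from $(m-1,1)$, whose Beta limit is absolutely continuous, so $A_n=n/2$ holds only finitely often. Your explicit identities $\mathcal{C}_{T_n}(v)-\mathcal{C}_{T_n}(u)=A_n-B_n$ and $\varphi_{T_n}(v)/\varphi_{T_n}(u)=A_n/B_n$ make the reduction more direct than the paper's detour through changes in the ordering, but the argument is the same.
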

\begin{proof}
Let $i$ and $j$ be neighboring vertices indexed by their arrival time. By Lemma  \ref{propertiesClosenessrumor}, for $n\ge i,j$,
$\psi_{T_n}(i) \le \psi_{T_n}(j) \iff |(T_n,i)_{j\downarrow} | \le |(T_n,j)_{i\downarrow} |$. The size of one of these subtrees increases by $1$ at step $n+1$, whereas the other remains the same size. This implies that if at time $n+1$,  $\psi_{T_{n+1}}(i) \ge \psi_{T_{n+1}}(j) \iff |(T_{n+1},i)_{j\downarrow} | \ge |(T_{n+1},j)_{i\downarrow} |$ then either $|(T_{n},i)_{j\downarrow} | = |(T_{n},j)_{i\downarrow} |$ or $|(T_{n+1},i)_{j\downarrow} | = |(T_{n+1},j)_{i\downarrow} |$. Thus, for every change in the ordering of the more central of neighboring vertices $i$ and $j$, at some time, the subtrees $(T,i)_{j\downarrow}$ and $(T,j)_{i\downarrow}$ must have been of equal size.

Without loss of generality, assume $i<j$ such that $(i,j)$ is an edge. The subtree sizes $|(T_n,j)_{i\downarrow} |, |(T_n,i)_{j\downarrow} |$ evolve over time $n\ge j$ as a standard (two-color) Pólya-Eggenberger urn with starting state $(j-1,1)$ at time $n=j$. By well-known almost surely convergence results (see, e.g., \cite{janson2004functional} \cite{mahmoud}), the proportion of the size of the subtree to the size of the vertex set converges to a limit 
\begin{equation*}
    \frac{|(T_n,j)_{i\downarrow}|}{n} \xrightarrow[n \to \infty]{a.s.} \beta \sim \text{Beta}(j-1,1).
\end{equation*}

By absolute continuity of the beta distribution, $\PROB(\text{Beta}(j-1,1)=1/2)=0$, so with probability one $|(T_n,j)_{i\downarrow}|/n$ equals $1/2$ finitely many times. This implies that almost surely, the subtree sizes are equal only finitely many times. 
\end{proof}
Jog and Loh \cite{persistanceJogLoh} proved that the centroid is persistent, a property that extends to the Jordan, rumor, and closeness centers:
\begin{theorem}{(Theorem 1 in \cite{persistanceJogLoh})} \label{JordanPersistence}
    In a \textsc{urrt}, with probability one, there exists a time $N$ and a node $ v^* \in T_N$ such that $ v^*$ is the unique centroid of $T_n$ for all $ n \ge N$. 
\end{theorem}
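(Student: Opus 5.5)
The plan is to prove persistence of the centroid in two stages. First, with probability one, only finitely many distinct vertices are ever a centroid of some $T_n$. Second, among those finitely many candidates the identity of the centroid changes only finitely often. Together these give a random time $N$ and a vertex $v^*$ that is the unique centroid of $T_n$ for all $n\ge N$. Since Lemma~\ref{propertiesClosenessrumor}(ii) identifies the Jordan, closeness and rumor centers with the centroid, the same conclusion transfers to all three.

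\emph{Step 1 (only finitely many vertices are ever centroids).} For $j\ge 2$, let $p_j$ be the parent of $j$. The subtree $(T_n,p_j)_{j\downarrow}$ evolves as the Pólya urn from the proof of Lemma~\ref{finitechangeneighbors}, started from $(j-1,1)$. Its proportion $X_n^{(j)}=|(T_n,p_j)_{j\downarrow}|/n$ is a martingale converging to a Beta$(1,j-1)$ limit.

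If $j$ is a centroid of $T_n$, then the subtree rooted at $j$ must have size at least $n/2$. Indeed, otherwise the component of $T_n\setminus j$ containing $p_j$ has more than $n/2$ vertices. So it suffices to bound $\PROB(\sup_{n\ge j}X_n^{(j)}\ge 1/2)$. By Doob's maximal inequality applied to a suitable moment of the martingale, for instance the $k$-th power or the exponential martingale of the urn, this probability should be at most $C\,2^{-cj}$ for some $c>0$. A simpler route is to use $\EXP X_n^{(j)}=1/j$ and a fourth moment $\EXP (X_n^{(j)})^4=O(j^{-4})$, which yields a summable bound $O(j^{-4})\cdot 2^4$.

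Since the resulting bounds are summable in $j$, Borel--Cantelli shows that almost surely only finitely many $j$ are ever centroids. This is the main technical step: the real work is controlling the supremum over all times $n$, not just a fixed $n$. I would try the submartingale $(X_n^{(j)})^4$ with explicit urn moments.

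\emph{Step 2 (finitely many changes).} Fix the finite random set $F$ of vertices that are ever centroids. By Lemma~\ref{propertiesJordan}(ii), the centroid can switch from $u$ to $v$ only through moments where two neighbouring vertices are both centroids or have equal Jordan values. Indeed, the centroid moves along edges by at most one step per time step, since only one subtree grows by one at each step. A transition between adjacent candidates $u,v$ requires $|(T_n,u)_{v\downarrow}|=|(T_n,v)_{u\downarrow}|$ at some time $n$, so $n=2|(T_n,u)_{v\downarrow}|$.

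By Lemma~\ref{finitechangeneighbors} (or Lemma~\ref{twoverticesJordan}), for each of the finitely many adjacent pairs in $F$ this occurs only finitely often almost surely. Hence there are finitely many changes overall, and after the last one the centroid is a fixed vertex $v^*$. Moreover, by the same a.s. finiteness of equality times, $v^*$ is eventually the unique centroid, since a second centroid would require such an equality.
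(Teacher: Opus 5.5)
Your proof is correct. The paper gives no proof of this statement; it is quoted from Jog and Loh \cite{persistanceJogLoh}. The closest in-paper comparison is the proof for the betweenness center in Theorem~\ref{BetweennessPersistenceCentroid}, and your two-step architecture matches it:
a necessary subtree-size condition for being the center, then a summable bound on the probability that the corresponding two-colour urn ever reaches that level, so almost surely only finitely many vertices are ever candidates, and finally finiteness of ties.

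The difference lies in the tools.
\begin{itemize}
\item For summability, the cited route uses exponential hitting estimates: either Galashin's random-walk bound (Lemma~\ref{galashinLemma}), roughly $P(j)2^{-j}$ for vertex $j$, or the Beta/Gamma Chernoff argument of Lemma~\ref{diagonalRW}. You instead apply Doob's maximal inequality to the bounded submartingale $(X_n^{(j)})^4$. Your version is entirely elementary and self-contained. The price is a polynomial rather than exponential bound. That suffices for almost sure persistence, but it only gives $O(J^{-3})$ for the probability that some vertex with index at least $J$ is ever a centroid, whereas Galashin's lemma gives an exponentially small bound.
\item For the last step, the paper's analogue uses stabilization of the pairwise order of any two candidates (Lemma~\ref{twoverticesJordan}, resp.\ Lemma~\ref{betweenPersistanceTwoVertices}). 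You instead use the structural fact that the centroid can only move across an edge at a time when both endpoints are centroids. So you only need ties across edges, which is the short Beta-limit argument of Lemma~\ref{finitechangeneighbors}.
\end{itemize}

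Two details should be made explicit.
\begin{itemize}
\item In Step~1 the moment bound must be uniform in $n$. Since $X_n^{(j)}=\EXP[X_\infty^{(j)}\mid\mathcal{F}_n]$ with $X_\infty^{(j)}\sim\mathrm{Beta}(1,j-1)$, conditional Jensen gives $\EXP (X_n^{(j)})^4\le 24/(j(j+1)(j+2)(j+3))$. Hence $\PROB(\sup_{n\ge j}X_n^{(j)}\ge 1/2)\le 384\,j^{-4}$. The second moment would already suffice, and the unproved $C2^{-cj}$ remark can be dropped.
\item In Step~2, $F$ is random. You should first intersect the countably many almost sure events that $|(T_n,p_j)_{j\downarrow}|=n/2$ for only finitely many $n$, one for each $j\ge 2$. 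Only then restrict to the finitely many edges with both endpoints in $F$. The transition claim is best stated precisely: a unique centroid of $T_n$ is still a centroid of $T_{n+1}$, and if $T_n$ has two centroids, exactly one of them is the unique centroid of $T_{n+1}$. Hence every change of centroid, and every time with two centroids, is a tie across an edge inside $F$.
\end{itemize}
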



We often refer to the max-Dickman-Goncharov distribution, as described by Molchanov and Panov \cite{dickman}. 
\begin{definition} {(The max-Dickman-Goncharov distribution.)}
Let $U_1,U_2,U_3,\ldots$ be a sequence of independent uniform random variables on $[0,1]$.
  A random variable $D$, distributed as
\begin{equation*}
    D = \max \{U_1,(1-U_1)U_2,(1-U_1)(1-U_2)U_3,\ldots\},
\end{equation*}
has the max-Dickman-Goncharov distribution.
\end{definition}
 The probability density function $g(x)$ of $D$ is unimodal, has maximum value $2$ at $x=1/2$, remains bounded away from $0$ on its support,  and equals $1/x$ for $x\in[1/2,1]$. For $x\in [0,1]$, the density function can be found by solving 
\begin{equation*}
    xg(x)=\int_0^{x/(1-x)}g(u)du.
\end{equation*}

The max-Dickman-Goncharov distribution is useful as it is the limiting distribution of the size of the largest subtree rooted at a neighbor of the root, after division by $n$. 
\begin{lemma}\label{maxSubtreeDickman}
In a \textsc{urrt},
    \begin{equation*}
        \frac{\max_{v\in N_n(1)}|(T_n,1)_{v\downarrow}|}{n} \xrightarrow[n \to \infty]{a.s.} \mathcal{X}\sim D~,
    \end{equation*}
    
    where $D$ is the max-Dickman-Goncharov distribution. 
\end{lemma}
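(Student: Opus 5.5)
We will prove Lemma \ref{maxSubtreeDickman} by making the subtrees hanging from the root into a stick-breaking process whose pieces are the fractions $U_1,(1-U_1)U_2,\ldots$. The natural tool is the Chinese restaurant process, equivalently a sequence of Pólya urns, as in the proof of Lemma \ref{finitechangeneighbors}.

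\emph{Step 1: the subtree of the first child.} Label the neighbours of the root in order of arrival, $c_1=2<c_2<c_3<\cdots$, and let $S_k(n)=|(T_n,1)_{c_k\downarrow}|$. A new vertex joins the subtree of $c_k$ exactly when its parent lies in that subtree, so it does so with probability $S_k(n)/n$. Consider the pair $\bigl(S_1(n),\, n-S_1(n)\bigr)$ for $n\ge 2$. It evolves as a two-colour Pólya urn started from $(1,1)$. Hence $S_1(n)/n\to U_1$ almost surely, with $U_1\sim\mathrm{Beta}(1,1)$ uniform on $[0,1]$.

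\emph{Step 2: the remaining children.} Condition on the evolution of the first subtree and look only at the times when a vertex is added outside it. At those times the remaining tree, consisting of the root together with the subtrees of $c_2,c_3,\ldots$, grows as an independent copy of a \textsc{urrt}, observed along a subsequence. Applying Step 1 to this copy gives
\[
S_2(n)\big/\bigl(n-S_1(n)\bigr)\to U_2\sim\mathrm{Unif}[0,1],
\]
with $U_2$ independent of $U_1$. Iterating, we get for each fixed $k$ that $S_k(n)/n\to (1-U_1)\cdots(1-U_{k-1})U_k=:P_k$ almost surely, where the $U_i$ are i.i.d.\ uniform. This is exactly the Chinese restaurant process (GEM$(1)$) description of the root's subtrees.

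\emph{Step 3: passing to the maximum.} This is the main obstacle, because we must pass from convergence of each fixed coordinate to convergence of the maximum over infinitely many coordinates. Fix $K$. Since the subtree sizes plus the root sum to $n$,
\[
\sup_{k>K} \frac{S_k(n)}{n}\le 1-\frac1n-\sum_{k\le K}\frac{S_k(n)}{n}\to \prod_{i\le K}(1-U_i)\quad\text{a.s.}
\]
The right-hand limit tends to $0$ almost surely as $K\to\infty$. Therefore
\[
\max_{k\le K}P_k\le \liminf_n \frac{\max_k S_k(n)}{n}\le \limsup_n\frac{\max_k S_k(n)}{n}\le \max\Bigl(\max_{k\le K}P_k,\ \prod_{i\le K}(1-U_i)\Bigr).
\]
Letting $K\to\infty$ gives almost sure convergence to $\max_k P_k$, which has the max-Dickman-Goncharov distribution $D$ by definition.

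The one point needing care is the independence and correct initial state of the successive urns in Step 2. I would justify it by the standard decomposition: the sequence of "which subtree receives the next vertex" decisions is a Chinese restaurant process with parameter $1$, and for such a process the limiting frequencies are GEM$(1)$. This can be cited from \cite{janson2004functional, mahmoud}, or verified directly through exchangeability.
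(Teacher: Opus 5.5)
Your proof is correct and follows the same route as the paper: first establish the almost sure stick-breaking (GEM$(1)$) limits $(1-U_1)\cdots(1-U_{k-1})U_k$ for the root's subtrees (the paper cites Pitman, while you derive them from nested P\'olya urns), then pass to the maximum. Your Step 3 is a genuine gain in rigor. The paper only invokes the continuous mapping theorem, but $\sup_k$ is not continuous under coordinatewise convergence, and your mass-conservation bound $\sup_{k>K} S_k(n)/n \le 1-\frac{1}{n}-\sum_{k\le K} S_k(n)/n \to \prod_{i\le K}(1-U_i)$ is exactly the tail control needed to make that step valid.
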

\begin{proof}
    It is well-known (see, e.g., Pitman \cite{pitman}) that for all $v_i \in N_n(1)$ the $i^{th}$ neighbor of the root,
    \begin{equation*}
        \frac{|(T_n,1)_{v_i\downarrow}|}{n} \xrightarrow{\text{a.s.}}\mathcal{X}_i \sim\prod_{k=1}^{i-1} (1-U_k)U_i \quad\text{ as }\; n \to \infty~,
    \end{equation*}
    
    for $U_1,U_2,\ldots,U_i$ independent uniform random variables on $[0,1]$. By the definition of the max-Dickman-Goncharov distribution and the continuous mapping theorem, the size of the largest subtree of a neighbor of the root, after division by $n$,
    converges almost surely to a limit with max-Dickman-Goncharov distribution.
\end{proof}

\section{Jordan centrality}
\label{sec:jordan}

In this section, we discuss properties of the Jordan centrality rank of the root and the index of the Jordan center.
Recall that the Jordan center coincides with the tree's centroid and with the closeness and rumor centers.
Moon \cite{MoonJordan} noted that the centroid is unique with high probability.
Haigh \cite{firstpapernetworkarcheology} showed that
$\PROB(R_n=1)=\PROB(I_n=1) \to 1 -\log(2) \text{ as }n\to\infty$. See also Moon \cite{MoonJordan}.
Moon \cite{MoonJordan} also found an explicit formula for the expected value of $I_n$.
In particular, we have $\EXP I_n \to 5/2$ as $n\to \infty$.
Since $\EXP I_n$ is bounded, this implies tightness of the sequence $\{I_n\}$ via Markov's inequality.
Jog and Loh \cite{persistanceJogLoh} showed that the centroid $I_n$ is persistent.
Our results concern tail bounds for the rank $R_n$ of the root (Theorem \ref{JordanUpperBoundProbability})
and the persistence of $R_n$ (Theorem \ref{persistencerankrootJordan}).





\subsection{Root estimation and expected rank of the root}

Bubeck, Devroye, and Lugosi \cite{findingadam} showed that Jordan centrality is good for
root estimation (i.e., $\{R_n\}$ is a tight sequence). In particular, they showed that
for all positive integers $x$,
$\PROB(R_n>x)\leq \frac{11\log(1/x)}{x}$. They also noted that $\PROB(R_n>x)\geq \frac{1}{3x}$.
The main result of this section improves the upper bound, obtaining optimal bounds up to a constant factor.

\begin{theorem} \label{JordanUpperBoundProbability}
There exists a constant $c>0$ such that for all $n\ge 1$ and for all $x\in[n],
$\begin{equation*}
        \PROB(R_n>x)\leq \frac{c}{x}~,
      \end{equation*}
\end{theorem}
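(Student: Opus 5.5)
We first reduce the event $\{R_n>x\}$ to a statement about subtree sizes. By Lemma~\ref{propertiesJordan}(i), a vertex $v\neq 1$ is more central than the root, that is $\psi(v)\le\psi(1)$ up to tie-breaking, only if $|(T_n,1)_{v\downarrow}|\ge n-|(T_n,1)_{v\downarrow}|$ fails to be dominated. More usefully, $v$ beats the root only if $\psi(1)\ge n-|(T_n,1)_{v\downarrow}|$. We write $M=\psi_{T_n}(1)=\max_{u\in N_n(1)}|(T_n,1)_{u\downarrow}|$ and $S_v=|(T_n,1)_{v\downarrow}|$. Then every vertex ranked above the root satisfies $S_v\ge n-M$. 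Hence
\[
R_n-1\le \#\{v\ne 1: S_v\ge n-M\}.
\]
If $M<n/2$, this set is empty. If $M\ge n/2$, the vertices with $S_v\ge n-M$ form a subtree containing the root. This subtree lies entirely inside the largest root branch, apart from the root itself, since the other branches have size at most $n-M$. Moreover, each vertex in it has at most one child $w$ with $S_w\ge n-M$ unless $S_v\ge 2(n-M)$. So the count is controlled by the number of vertices $v$ in the big branch whose subtree has size at least $k:=n-M$.

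The key step is a bound on the expected number of vertices with large subtrees. For a fixed threshold $k$, the expected number of vertices $v\ge 2$ with $S_v\ge k$ in a \textsc{urrt} is
\[
\sum_{j\ge 2}\PROB(S_j\ge k)\approx \sum_j \PROB\bigl(\text{Beta}(1,j-1)\ge k/n\bigr)=\sum_j (1-k/n)^{j-1}\lesssim n/k,
\]
and the exact Pólya-urn computation gives the same bound up to a constant. This alone loses a logarithm once we integrate over the random $M$, which is what produced the $\log x/x$ bound of \cite{findingadam}. So we will instead condition on the structure of the large branch. Split according to $k=n-M$: the event $\{R_n>x\}$ requires that at least $x$ vertices have $S_v\ge k$.

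We use the fact that the number of vertices with $S_v\ge k$ is of order $n/k$ only in a cumulative sense. We decompose the set $\{v: S_v\ge k\}$ as a union over dyadic levels $S_v\in[2^ik,2^{i+1}k)$. Vertices in one level, after removing descendants, are near-disjoint, so a branching (``spine'') structure appears. For the tail of the count we therefore aim for an exponential-type bound
\[
\PROB\bigl(\#\{v:S_v\ge k\}>x\bigr)\le C e^{-c x k/n},
\]
proved by exposing the tree as a continuous-time Yule process. In that picture a vertex born at time $t$ has subtree proportion about $e^{-t}\cdot\mathrm{Exp}(1)$, and the count of vertices with proportion above $\delta$ behaves like a Poisson-ish variable with mean $\asymp 1/\delta$ and good concentration. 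Combining this with the density of $1-M/n$ near $0$, which comes from Lemma~\ref{maxSubtreeDickman} and the fact that $g(x)=1/x$ near $1$ gives $\PROB(n-M\le \delta n)\asymp \delta$, yields
\[
\PROB(R_n>x)\lesssim \int_0^{1/2} e^{-cx\delta}\,d\delta+\PROB(n-M\le n/x)\lesssim \frac1x .
\]

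The main obstacle is making this non-asymptotic and uniform in $n$. Two things are required. First, the concentration bound on the number of large-subtree vertices must be proved at finite $n$, with the correlation between $M$ and these counts handled. Second, we need $\PROB(n-M\le \delta n)\le C\delta$ for all $n$ rather than just in the limit. For the latter we plan to use exact Pólya-urn/beta-binomial formulas: conditionally on the arrival of each root child, the branch sizes are beta-binomial, and a union bound over root children $j$ weighted by $(1/j)$-type factors gives $O(\delta)$. For the former we would first try to condition on the big branch, which is itself a \textsc{urrt} of size $M$, and apply a second-moment or exponential-moment bound to the count there.
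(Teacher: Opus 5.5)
Your reduction is correct and coincides with the paper's. Every vertex ranked above the root lies in $\{v:S_v\ge n-M\}$. This set is empty when $M<n/2$ and otherwise is a subtree of the largest root branch (the paper's $\mathcal{S}_n$). You also correctly diagnose why Markov's inequality on its size loses a logarithm. Conditioning on the root-branch sizes, given which the big branch is a \textsc{urrt} on $M$ vertices, does legitimately decouple the count from $M$. The bound $\PROB(n-M\le\delta n)\le C\delta$ uniformly in $n$ is routine from the beta-binomial laws of the root branches.

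The gap is the one estimate that carries the whole improvement from $\log x/x$ to $1/x$. You need a tail bound, at scale $m/k$, for the number $N_k$ of vertices with subtree size at least $k$ in a \textsc{urrt} on $m$ vertices, and you only state it as a target. The Yule-process/``Poisson-ish'' picture does not justify it. The indicators $\mathbbm{1}[S_v\ge k]$ are strongly dependent, since every ancestor of a counted vertex is counted. The real danger is long chains of counted vertices each having exactly one counted child. Neither the dyadic-level ``spine'' remark nor the observation that two large children force $S_v\ge 2k$ bounds the number of such chain vertices.

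The missing idea, which is the core of the paper's proof, is to split the counted subtree into three parts: its leaves $\mathcal{L}$, the vertices $\mathcal{B}$ with at least two counted children, and the vertices $\mathcal{O}$ with exactly one. Leaves have disjoint subtrees of size at least $k$, so $|\mathcal{L}|\le m/k$, and $|\mathcal{B}|<|\mathcal{L}|$. For $v\in\mathcal{O}$, the second-largest child subtree is smaller than $k$ by definition. By the stick-breaking law of the children's masses, it has size at least $k/2$ with probability bounded below, conditionally on the rest of the exploration. These shed subtrees are pairwise disjoint, so the number of such successes is at most $2m/k$ while dominating a $\mathrm{Bin}(|\mathcal{O}|,p)$ variable. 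This forces $|\mathcal{O}|=O(m/k)$ with a binomial tail; the paper uses Cantelli, and Chernoff would give your exponential bound.

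The paper also combines the pieces differently. Rather than integrating over $M$, it splits on whether $(n-M)/n<x$, which has probability at most $2x$ by the bounded max-Dickman--Goncharov density. Otherwise it uses monotonicity of the counted set in the threshold, so with $x\asymp 1/t$ it only needs the number of vertices of mass at least $x$ to exceed $t$ with probability $O(1/t)$, at a single scale. The paper runs this argument on the limiting masses in the Ulam--Harris tree and concludes a bound on $\limsup_{n\to\infty}\PROB(R_n\ge t)$, so your uniformity-in-$n$ concern is not addressed there either.

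In your integration scheme, even a uniform bound $\EXP N_k^2\le C(m/k)^2$ would suffice. Some such estimate must actually be proved, and the $\mathcal{L}/\mathcal{B}/\mathcal{O}$ decomposition is the natural route to it.
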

\begin{proof}
Let $\mathcal{S}_n$ be the set of nodes in the tree whose Jordan centrality is at most that of the root, and let $D_n^*=\max_{v\in N_n(1)}|(T_n,1)_{v\downarrow}|$. Then,
if $D^*_n/n<1/2$,  we have $\mathcal{S}_n =\{1\}$. If $D^*_n/n \ge 1/2$, however, then 
$\mathcal{S}_n$ is a subtree rooted at $1$ that contains all nodes $v$ with
    \begin{equation*}
            1-\frac{|(T_n,1)_{v\downarrow}|}{n} < \frac{D_n^*}{n}~.
    \end{equation*}
By Lemma \ref{propertiesJordan}, if $\frac{D_n^*}{n}<\frac{1}{2}$, then the root is the centroid. If $\frac{D_n^*}{n}\ge\frac{1}{2}$, then the root is not the centroid, and by monotonicity the vertices in $\mathcal{S}_n$ form a subtree.


Let $D^{\dagger}_n=\frac{D_n^*}{n}$ and let $D^{\dagger}=\lim_{n\to\infty}D^{\dagger}_n$.  By Lemma \ref{maxSubtreeDickman}, $D^{\dagger}$ exists almost surely.
Then, for any $x\in(0,1)$ and any $t\in \mathbb{N}$,
\begin{align} \label{equationRank}
   \limsup_{n\to\infty}\PROB(R_n\ge t)\le \limsup_{n\to\infty}\PROB(\mathcal{S}_n\ge t,D^{\dagger}_n\le 1-x) + \limsup_{n\to\infty}\PROB(D^{\dagger}_n>1-x)~. 
\end{align}
We embed the \textsc{urrt} process in the Ulam-Harris tree defined in section \ref{notation}. Write $T_{\infty}$ for the Ulam-Harris tree. The \textsc{urrt} at time $n$, $T_n$, is a subtree of $T_{\infty}$, and $T_1\subset T_2\subset \cdots\subset T_{\infty}$. 

Define the value of a node $v$ by $\text{Value}(v)=
\lim_{n\to\infty}|(T_n,1)_{v\downarrow}|/n$. The evolution of $(|(T_n,1)_{v\downarrow}|,n-|(T_n,1)_{v\downarrow}|)$ is equivalent to that of a two colour Pólya-Eggenberger urn, and by standard results, the limit of $|(T_n,1)_{v\downarrow}|/n$ exists almost surely.

For a fixed vertex $v\in T_{\infty}$ with value $X_v = \text{Value}(v)$, the joint distribution of the values of its children $v_1,v_2,\ldots$, is 
\[
\left(\text{Value}(v_1), \text{Value}(v_2), \ldots \right) 
\sim \left( X_vU_1,X_v(1-U_1)U_2,\ldots \right)~,
\]

where $\{U_i\}_{i=1}^{\infty}$ are independent uniform $[0,1]$ random variables. The  Jordan centrality of vertex $v$ follows the limit law given by
$$
\lim_{n\to\infty}\frac{\psi_n(v)}{n}=\max\left( 1-X_v, X_{v_2}, X_{v_3},\ldots\right) ~.
$$
Let $\mathcal{S}$ be the subset of vertices of $T_{\infty}$ that are as central as the root in $T_{\infty}$, that is, vertices $v$ with $X_v\ge 1-D^\dagger$.
The set $\mathcal{S}$ forms a subtree of $T_{\infty}$. By Theorem \ref{persistencerankrootJordan} below, the set of vertices that ever become more central than the root is almost surely finite. Thus, after some random finite time, no new vertex ever becomes more central than the root. This implies that $|\mathcal{S}|<\infty$. For a fixed vertex $v$ and for sufficiently large $n$, $v$ is more central than the root in $T_n$ if and only if it is more central than the root in $T_{\infty}$. 

If  $D^{\dagger}\le 1-x$, the subtree $\mathcal{S}$ of $T_{\infty}$ includes all vertices with $\lim_{n\to\infty}|(T_n,1)_{v \downarrow}|/n\ge x$.
Let us divide the vertices in $\mathcal{S}$ into three sets: the set $\mathcal{L}$ of leaves of the subtree, 
the set $\mathcal{B}$ of vertices with more than two children in the subtree, and the set $\mathcal{O}$ of vertices with one single child in the subtree. It is easy to see that $|\mathcal{B}|<|\mathcal{L}|$. Define similarly, for the partition of the set  $\mathcal{S}_n$, the set $\mathcal{L}_n$ of leaves of the subtree, the set $\mathcal{B}_n$ of vertices with more than two children in the subtree, and the set $\mathcal{O}_n$ of vertices with one single child in the subtree.


Consider a vertex $v$ in the Ulam-Harris tree. Let $u_1,u_2,\ldots $ be the children of $v$, and let $u^{(k)}$ be the vertex among $\{u_i\}_{i=1}^{\infty}$ whose value is the $k$-th largest. (Since the value has an absolutely continuous distribution, ties occur with probability zero.)
If $u^{(1)}=u_1$, then
\begin{align}
    \text{Value}(u^{(2)}) &\overset{d}{=} X_v  \max\{(1-X_1)U_2,(1-X_1)(1-U_2)U_3, \ldots\}
    \nonumber\\&  \overset{d}{=} X_v (1-X_1) \max\{U_2,(1-U_2)U_3, \ldots\}
    \nonumber\\&  \overset{d}{=} X_v (1-X_1) D_2~, \label{secondlargest}
\end{align}
where $X_1= \text{Value}(u_1)/ \text{Value}(v)$  and $D_2$ is max-Dickman-Goncharov distributed, independent of $X_1$.

Let $\mathcal{O}'$ denote the set of siblings of nodes in $\mathcal{O}$ in the Ulam-Harris tree.
Fix $x\in (0,1)$. If $D^\dagger \le 1-x$, then for any $u\in \mathcal{O}$,
$\text{Value}(u) \ge x$, and
there exists $v\in\mathcal{O}'$,
a  sibling of $u$, such that
$X_v<x$. To see this, observe that $v$ is less central than the root.

Let $x\in(0,1/100]$ and $x' = x/X_v$. Then
\begin{align}
    \PROB&\left(\text{Value}(u^{(2)})\ge \frac{x}{2} \middle| u^{(1)}\in \mathcal{O},u^{(2)}\in \mathcal{O}', D^\dagger \le 1-x, u^{(1)}=u_1\right) \nonumber\\
     &= \PROB\left(\text{Value}(u^{(2)})\ge \frac{x}{2} \middle| \text{Value}(u^{(1)})\ge x,\text{Value}(u^{(2)}) < x, D^\dagger \le 1-x, u^{(1)}=u_1\right)
     \nonumber\\
     &= \PROB\left((1-D_1)D_2\ge \frac{x'}{2} \middle| D_1\ge x',(1-D_1)D_2 < x', D^\dagger \le 1-x, u^{(1)}=u_1\right)
    \nonumber\\&\ge \frac{1}{2}~. \label{lowerboundhalf}
\end{align}
where in the second equality we used (\ref{secondlargest}). Note that this bound holds for any conditioning  $u^{(1)}=u_k$ for fixed $k>0$. 

By the lower-bound (\ref{lowerboundhalf}), the sizes of the subtrees rooted at vertices in $\mathcal{O}'$ are lower-bounded by a random variable distributed as $\frac{x}{2}\text{Bin}(|\mathcal{O}|,1/2)$.
Hence, 
\begin{align*}
   \limsup_{n\to\infty}&\PROB(R_n\ge t,D^{\dagger}_n\le 1-x) = \limsup_{n\to\infty}\PROB( |\mathcal{L}_n| + |\mathcal{B}_n| + |\mathcal{O}_n| \ge t,D^{\dagger}_n\le 1-x) \\ &\le \PROB( 2|\mathcal{L}| +  |\mathcal{O}'| \ge t,D^{\dagger}\le 1-x)\\ &\le \PROB( |\mathcal{L}|  \ge t/3,D^{\dagger}\le 1-x) + \PROB( |\mathcal{O}'|  \ge t/3,D^{\dagger}\le 1-x) \\ &\le \mathbbm{1}_{[t \le 3/x]} + \PROB\left( \frac{x}{2}\text{Bin}(t/3,1/2)  \le1 \right) \\&= \mathbbm{1}_{[t \le 3/x]} + \PROB\left(\text{Bin}(t/3,1/2)-\frac{t}{6}  \le \frac{2}{x}-\frac{t}{6}\right) \\ & \le \mathbbm{1}_{[t \le 3/x]} + \mathbbm{1}_{[t \le 24/x]} +\mathbbm{1}_{[t > 24/x]}\PROB\left(\text{Bin}(t/3,1/2)-\frac{t}{6}  \le -\frac{t}{12}\right)  \\&\le \mathbbm{1}_{[t \le 3/x]} + \mathbbm{1}_{[t \le 24/x]} +\mathbbm{1}_{[t > 24/x]}\frac{t/12}{t/12+t^2/144} ~,
\end{align*}
where the last inequality follows from the Chebyshev-Cantelli inequality. The third inequality follows after observing that any vertex $v$ that is a leaf in $\mathcal{S}$---and thus belongs to $\mathcal{L}$---is more central than the root, and is in the largest subtree of the root, $D^*$. 
By Lemma \ref{propertiesJordan}, the value of $v$ satisfies $\text{Value}(v) \ge (1-D^\dagger)$. For two distinct vertices, $u,v\in \mathcal{L}$ the subtrees $(T,1)_{v\downarrow},(T,1)_{u\downarrow}$ are disjoint. As there are $|\mathcal{L}|$ such vertices, and the sum of values for vertices with disjoint descendants is at most one, we have $|\mathcal{L}|(1-D^{\dagger})\le 1$. Therefore, if $D^{\dagger}\le 1-x$, we require $|\mathcal{L}|x \le 1$.

Thus,
\begin{equation*}
    \limsup_{n\to\infty}\PROB( |\mathcal{L}_n| + |\mathcal{B}_n| + |\mathcal{O}_n| \ge t,D^{\dagger}_n\le 1-x) = \begin{cases}
        1 & \text{ if } t\le 24/x~ \\ \frac{12}{12+t}
    & \text{ if } t > 24/x~.
    \end{cases} 
\end{equation*}

Together with equation (\ref{equationRank}), 
\begin{align*}
\limsup_{n\to\infty}\PROB(R_n\ge t) &\le \limsup_{n\to\infty}\PROB(R_n\ge t,D^{\dagger}_n\le 1-x) + \limsup_{n\to\infty}\PROB(D^{\dagger}_n>1-x) \\&\le \limsup_{n\to\infty}\PROB( |\mathcal{L}_n| + |\mathcal{B}_n| + |\mathcal{O}_n| \ge t,D^{\dagger}_n\le 1-x) + 2x\\& \le  \mathbbm{1}_{[t \le 24/x]} +\mathbbm{1}_{[t > 24/x]}\frac{12}{12+t} +2x~.
\end{align*}In the second inequality, we used the fact that the limit of $D^{\dagger}_n$ is max-Dickman-Goncharov distributed, and the density of the max-Dickman-Goncharov distribution is bounded by $2$. Thus, for $t>24/x$,
\begin{equation*}
    \limsup_{n\to\infty} \PROB(R_n\ge t) \le  \frac{12}{12+t} +2x~.
\end{equation*}
By setting $x=25/t$, for all $t\ge 1$,
\begin{equation*}
    \limsup_{n\to\infty} \PROB(R_n\ge t) \le  \frac{12}{12+t} + \frac{50}{t}~.
\end{equation*}
\end{proof}

By integrating the bounds of Theorem \ref{JordanUpperBoundProbability} and noting that the lower bound follows
from the above-mentioned observation of \cite{findingadam}, we obtain

\begin{corollary} \label{JordanExpected}
    The Jordan rank of the root satisfies $\EXP R_n = \Theta(\log(n))$. 
  \end{corollary}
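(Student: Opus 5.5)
We will use the tail-sum formula $\EXP R_n=\sum_{t=1}^{n}\PROB(R_n\ge t)$, since $R_n\in[n]$, and bound the tail from above and below.

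\emph{Upper bound.} Theorem \ref{JordanUpperBoundProbability} gives a constant $c$ with $\PROB(R_n>x)\le c/x$ for every $x\in[n]$, uniformly in $n$. Hence
\[
\EXP R_n \le 1+\sum_{t=2}^{n}\PROB(R_n>t-1)\le 1+\sum_{t=2}^{n}\frac{c}{t-1}\le 1+c(1+\log n)=\mathcal{O}(\log n).
\]
One point needs care. The displayed argument in the proof of Theorem \ref{JordanUpperBoundProbability} controls only $\limsup_{n}\PROB(R_n\ge t)$, whereas here we need the bound for every fixed $n$. We take the theorem as stated, i.e.\ uniform in $n$ with $x$ up to $n$. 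If only the limsup version were available, we would pass to a fixed $n$ using the fact that $\PROB(R_n\ge t)$ is eventually within a factor $2$ of its limsup for each fixed $t$. That is not uniform in $t\le n$, so the uniform statement really is what we rely on.

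\emph{Lower bound.} We use the observation of \cite{findingadam} that $\PROB(R_n>x)\ge \frac{1}{3x}$, valid for all $n$ and all $x$ in a range proportional to $n$, say $x\le n/C$ for some fixed constant $C$. The heuristic behind it is this: with probability of order $1/x$ the root's largest branch has relative size at least about $1-1/x$. In that event the root is worse than a path of roughly $x$ vertices descending into that branch, each carrying a complementary mass below $1/x$. Summing over the range gives
\[
\EXP R_n\ge\sum_{t=1}^{\lfloor n/C\rfloor}\PROB(R_n>t)\ge\sum_{t=1}^{\lfloor n/C\rfloor}\frac{1}{3t}\ge \frac13\log(n/C)=\Omega(\log n).
\]

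The main obstacle is confirming that the lower bound $\PROB(R_n>x)\ge 1/(3x)$ holds for $x$ up to a constant fraction of $n$ and not merely for fixed $x$ as $n\to\infty$. If needed, we would reprove it directly. First, the event $\{D_n^*\ge n-n/(2x)\}$ has probability at least $c'/x$ for $x\le n/C$; this follows by computing the chance that vertex $2$'s subtree swallows nearly everything, a uniform-type Pólya urn event. Second, on that event the heavy branch contains a chain of at least $x$ vertices each having at least $n/2x$ descendants in the complement direction. This makes the root strictly less central than each of them, via Lemma \ref{propertiesJordan}(i).
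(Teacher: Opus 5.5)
This is the paper's argument: the corollary is obtained by summing the $c/x$ tail of Theorem~\ref{JordanUpperBoundProbability} for the upper bound and the $1/(3x)$ tail from \cite{findingadam} for the lower bound. Your remark that both tails must hold uniformly in $n$ is fair, since the limsup-based proof of the theorem does not literally deliver that, and the paper passes over it.

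One caution on your fallback. The vertices beating the root form a subtree, not a chain, and a path of $x$ vertices each with subtree size at least $n/(2x)$ is very unlikely, because subtree fractions shrink geometrically along paths; the same problem affects your heuristic. A clean finite-$n$ replacement is as follows. Condition on the root being a leaf of $T_{x+1}$, which has probability $1/x$. For every $n>x$, each $v\in\{2,\dots,x+1\}$ then satisfies $|(T_n,1)_{v\downarrow}|\ge|(T_n,v)_{1\downarrow}|$ with probability at least $1/2$, by a symmetric P\'olya-urn comparison. A reverse Markov bound then gives $\PROB(R_n>x/4)\ge 1/(3x)$.
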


We also obtain the following bound for the size of the confidence set of the root, obtained by the $K$ most Jordan-central vertices.   

\begin{corollary}
    Let $H_{K}$ be the set of the top $K$ most central vertices according to the Jordan centrality. For every $\varepsilon>0$, if $K\ge 66/\varepsilon$,
then $\liminf_{n\to\infty} \PROB(1 \in H_{K})\ge 1-\varepsilon$. 
\end{corollary}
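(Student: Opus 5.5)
This corollary should follow directly from the tail bound obtained at the end of the proof of Theorem \ref{JordanUpperBoundProbability}. The key observation is that $1\in H_K$ is exactly the event $R_n\le K$. Because of the pessimistic tie-breaking rule, the ranks $\gamma_n(1),\dots,\gamma_n(n)$ form a permutation of $[n]$. The set $H_K$ therefore consists of the vertices of rank at most $K$, and the root belongs to it if and only if $R_n\le K$. Hence
\[
\liminf_{n\to\infty}\PROB(1\in H_K)=1-\limsup_{n\to\infty}\PROB(R_n\ge K+1).
\]

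We then apply the explicit limiting bound from that proof, valid for all $t\ge 1$:
\[
\limsup_{n\to\infty}\PROB(R_n\ge t)\le \frac{12}{12+t}+\frac{50}{t}\le \frac{62}{t}.
\]
Taking $t=K+1$ gives
\[
\limsup_{n\to\infty}\PROB(R_n\ge K+1)\le \frac{62}{K+1}<\frac{62}{K}.
\]
If $K\ge 66/\varepsilon$, this is at most $62\varepsilon/66<\varepsilon$, which yields $\liminf_{n}\PROB(1\in H_K)\ge 1-\varepsilon$.

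The only delicate point is the validity of that final inequality. The choice $x=25/t$ requires $x\in(0,1/100]$, which holds only for $t\ge 2500$, so the bound as written needs a short check for small $t$. I would handle this without extra work. When $K<2500$ and $\varepsilon\ge 66/K>66/2500$, I would check the claim directly. When $62/K\ge 1$, the conclusion $1-\varepsilon\le 0$ holds trivially. In the remaining range, I would instead use the cruder form $\frac{12}{12+t}+2x$ with $x=1/100$, and verify that it still falls below $66/K$.

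If this bookkeeping does not close cleanly, I would fall back on a slightly larger margin. The slack between $62$ and $66$ is intended to absorb these constant-level issues, including the passage from $R_n>K$ to $R_n\ge K+1$. I expect this small-$K$ verification to be the main obstacle; the rest of the argument is immediate.
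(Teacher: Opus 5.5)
Your main line is the paper's argument. The event $1\in H_K$ is exactly $R_n\le K$, and the bound $\limsup_{n}\PROB(R_n\ge t)\le \frac{12}{12+t}+\frac{50}{t}\le\frac{62}{t}$ from the end of the proof of Theorem~\ref{JordanUpperBoundProbability}, applied at $t=K+1$, gives $\limsup_n\PROB(R_n>K)<62/K\le 62\varepsilon/66<\varepsilon$. If you take that bound as the paper states it (``for all $t\ge 1$''), nothing more is needed.

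Your caveat is a fair criticism of the theorem's proof. The choice $x=25/t$ violates the assumption $x\in(0,1/100]$, under which \eqref{lowerboundhalf} is claimed, as soon as $t<2500$. However, the repair you sketch does not work:
\begin{itemize}
\item The estimate $\limsup_n\PROB(R_n\ge t)\le\frac{12}{12+t}+2x$ is only derived for $t>24/x$. For $t\le 24/x$ the argument gives only the trivial bound $1$.
\item With $x=1/100$ it therefore says nothing for $t\le 2400$. That is precisely the range $66<K\le 2399$ (i.e.\ $\varepsilon$ roughly between $0.03$ and $1$) you need to cover, and there is nothing to ``check directly'' against there.
\item A slightly larger margin does not help either, because the obstruction is the range of $t$, not a constant factor. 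Honoring $x\le 1/100$, the argument yields the corollary only with $66$ replaced by a constant of order $2500$; then $K+1>2500$ for every $\varepsilon<1$, and $62/(K+1)<\varepsilon$.
\end{itemize}
To keep $66$, you would have to re-establish \eqref{lowerboundhalf} (possibly with a constant smaller than $1/2$) for $x$ well beyond $1/100$, or import an independent tail bound for moderate $K$. Otherwise, simply rely on the theorem's final inequality as stated, which is what the paper does.
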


\subsection{Persistence}
To establish the persistence of the root's Jordan rank $R_n$, we first observe that the evolution of the subtrees adjacent to the root is stochastically equivalent to Hoppe’s urn model \cite{hoppeUrn}. This process begins with a single black ball; at each discrete time step, a ball is drawn uniformly at random. If the black ball is selected, it is returned to the urn alongside a ball of a new, unique color. If a colored (non-black) ball is drawn, it is returned with an additional ball of the same color. 

In the context of the tree, the black ball represents the root, and each color corresponds to a specific subtree rooted at a neighbor of the root. An attachment to the root (which creates a new subtree) is thus modeled by drawing the black ball. Our proof begins by showing that in Hoppe’s urn, a single color almost surely emerges as the unique, permanent leader in size.

\begin{theorem}\label{Hoppelargest}
    There exists a random variable $\tau < \infty$ such that for all $n\ge\tau$, the color with the largest number of balls in Hoppe's urn remains unchanged.
\end{theorem}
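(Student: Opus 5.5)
The plan is to reduce the statement to almost sure properties of the limiting proportions in Hoppe's urn. Label colors in order of creation. After the $i$-th color appears, the counts of color $i$ and of all other balls evolve as a two-colour P\'olya--Eggenberger urn, exactly as in the proof of Lemma \ref{maxSubtreeDickman}. Hence the proportion of color $i$ converges almost surely to a limit $P_i$, and jointly
\[
(P_1,P_2,\ldots)\overset{d}{=}\bigl(U_1,(1-U_1)U_2,(1-U_1)(1-U_2)U_3,\ldots\bigr),
\]
the GEM$(1)$ stick-breaking law (Pitman \cite{pitman}). I will use two properties of this limit. First, $\sum_i P_i=1$ almost surely, since $\prod_k(1-U_k)\to0$. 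Second, the $P_i$ are almost surely pairwise distinct, because each $P_i/P_j$ has a conditional distribution that is absolutely continuous. Therefore there is a unique index $i^*$ maximizing $P_i$, and the gap $\delta=P_{i^*}-\sup_{i\ne i^*}P_i$ is strictly positive. This holds because only finitely many $P_i$ exceed $P_{i^*}/2$.

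Next I show that color $i^*$ is eventually the strict leader. Write $N_i(n)$ for the number of balls of color $i$ at time $n$. Choose a finite $M$, random, with $\sum_{i>M}P_i<\delta/4$. For the finitely many colors $i\le M$, almost sure convergence gives a time after which $|N_i(n)/n-P_i|<\delta/4$. This places $N_{i^*}(n)/n$ above $P_{i^*}-\delta/4$ and every other $N_i(n)/n$ with $i\le M$ below $P_{i^*}-3\delta/4$.

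The colors $i>M$ are the main obstacle, because there are infinitely many of them and convergence is not uniform. I handle them by controlling their total mass. Let $S_M(n)$ be the number of balls of colors $>M$, together with the black ball. Then
\[
\frac{S_M(n)}{n}=1-\sum_{i\le M}\frac{N_i(n)}{n}\;\longrightarrow\;1-\sum_{i\le M}P_i=\sum_{i>M}P_i<\frac{\delta}{4}\quad\text{a.s.}
\]
So eventually every color $i>M$ has $N_i(n)/n<\delta/4+o(1)$, which is far below $N_{i^*}(n)/n$. Here I need $\sum_i P_i=1$ a.s., so that mass does not escape to colors created later. This step also uses the a.s. convergence of the finitely many proportions, not just convergence in distribution.

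Combining these bounds, there is an almost surely finite time $\tau$ such that for all $n\ge\tau$ color $i^*$ strictly has the most balls, so no ties occur and the leader never changes. Choosing $M$, $\delta$ and the convergence times measurably makes $\tau$ a random variable.
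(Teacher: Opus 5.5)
Your proof is correct, but it takes a different route from the paper's. The paper follows Th\"ornblad's argument. It lets $\mathcal{H}_n$ be the event that the colour born at time $n$ ever catches up with the colour leading at time $n$. It applies Galashin's bound (Lemma \ref{galashinLemma}) on the event that the leader eventually has at least $n^{1/2}$ balls, which gives a summable series, and concludes by Borel--Cantelli that only finitely many colours ever lead. It then uses the fact that two colours of a two-colour P\'olya urn tie only finitely often to single out the eventual leader.

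You argue instead directly from the almost sure limit. Since the limiting frequencies are GEM$(1)$, they are pairwise distinct and have total mass one, so there is a strict gap $\delta>0$. The infinitely many late colours are then controlled all at once through their aggregate count $S_M(n)$, whose normalized limit $\sum_{i>M}P_i$ is below $\delta/4$. This also forces $i^*\le M$, since $P_{i^*}\ge\delta$, a point you use implicitly. Your route avoids the diagonal-hitting estimate, the $\sqrt{n}$ growth event and the separate tie argument, and it yields a strict leader directly.

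The price is that your argument relies on the exact structure of the limit: no mass escapes to later colours, and the atoms are distinct. That structure is natural when the competitors partition the total mass. The paper's quantitative Borel--Cantelli scheme needs only a growth lower bound for the leader and the exponential hitting estimate. It is also the same machinery the paper reuses in Theorem \ref{persistencerankrootJordan}, where the quantities being compared are subtree sizes that do not partition the tree, so a total-mass argument is less immediate.
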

The proof relies on the following lemma, stated as in \cite{persistanceJogLoh}, first shown by Galashin \cite{Galashin2016}. 

\begin{lemma} \label{galashinLemma}
    Consider a $2$-dimensional random walk on the lattice $\mathbb{N}\times \mathbb{N}$. Denote by $W_n$ its position at time $n$, and assume
    \begin{align*}
        \PROB(W_{n+1}=(i+1,j)| W_n = (i,j) ) &\propto i~, \\ 
        \PROB(W_{n+1}=(i,j+1)| W_n = (i,j) ) &\propto j~,
    \end{align*}
(where  $f \propto g$ denotes that there exists a constant $c>0$ such that $f=cg$).
    For $M>2$, let $D_M$ be the event that the random walk starting at $W_0 = (M,1)$ reaches the diagonal at some future time. Then there exists a fixed polynomial $P$ such that 
    \begin{equation*}
        \PROB(D_M) \le \frac{P(M)}{2^M}.
    \end{equation*}
\end{lemma}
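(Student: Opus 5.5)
We plan to avoid any path counting and instead combine the strong Markov property at the first hitting time of the diagonal with the almost sure limit of a Pólya urn. The walk $W_n=(i_n,j_n)$ is exactly a two-colour Pólya--Eggenberger urn: at each step a ball is drawn with probability proportional to the current counts and returned together with one extra ball of the same colour. Hence, started from any state $(a,b)$, the proportion $j_n/(i_n+j_n)$ converges almost surely to a limit $\Lambda$ with $\Lambda\sim\text{Beta}(b,a)$. These are the same standard results already used in the proof of Lemma \ref{finitechangeneighbors}.

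\textbf{Key steps.} We would carry out the following in order.

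\begin{enumerate}
\item Define $\tau=\inf\{n\ge 0: i_n=j_n\}$, so that $D_M=\{\tau<\infty\}$. Define $\Lambda=\lim_n j_n/(i_n+j_n)$, which exists almost surely.
\item Started from $W_0=(M,1)$, we have $\Lambda\sim\text{Beta}(1,M)$. Its density is $M(1-y)^{M-1}$, which gives
\[
\PROB(\Lambda\ge 1/2)=\int_{1/2}^1 M(1-y)^{M-1}\,dy=2^{-M}.
\]
\item On the event $\{\tau<\infty\}$, the walk sits at some diagonal point $W_\tau=(a,a)$. By the strong Markov property, the future evolution is again a Pólya urn, now started from $(a,a)$. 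The limit $\Lambda$ is a tail quantity, unaffected by the finitely many steps before $\tau$. Conditionally on $\{\tau<\infty,\,W_\tau=(a,a)\}$, we therefore get $\Lambda\sim\text{Beta}(a,a)$, which is symmetric about $1/2$. Consequently
\[
\PROB(\Lambda\ge 1/2\mid \tau<\infty,\,W_\tau=(a,a))\ge 1/2.
\]
\item Summing over $a$ yields
\[
2^{-M}=\PROB(\Lambda\ge 1/2)\ge \PROB(\Lambda\ge 1/2,\,\tau<\infty)\ge \tfrac12\PROB(D_M).
\]
Hence $\PROB(D_M)\le 2\cdot 2^{-M}$, which is the claim with the constant polynomial $P\equiv 2$.
\end{enumerate}

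\textbf{Main obstacle.} The step that needs care is the justification in step 3 that the conditional law of the limit, given the stopped history, is $\text{Beta}(a,a)$. We would argue this via the strong Markov property of the discrete chain at the stopping time $\tau$, together with the fact that the urn's limiting proportion depends only on the restarted state. This can be made explicit through the de Finetti / exchangeability representation of the Pólya urn started from $(a,a)$.

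We first considered a direct union bound over diagonal points, summing $\PROB(W \text{ visits } (a,a))$ via the Pólya formula. Each term is of order $M/(a\,2^{M})$, and the sum over $a$ diverges logarithmically. This is why the first-hitting argument above is needed rather than a naive summation.
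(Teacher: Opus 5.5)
Your argument is correct, and it takes a genuinely different route from the paper. The paper gives no proof of this lemma at all. It imports the statement from \cite{Galashin2016}, as restated in \cite{persistanceJogLoh}, together with its unspecified polynomial loss $P(M)$.

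You replace that with a self-contained reflection argument for the Pólya urn. The tail of the $\mathrm{Beta}(1,M)$ limit gives $\PROB(\Lambda\ge 1/2)=2^{-M}$. The symmetry of the $\mathrm{Beta}(a,a)$ limit at the first diagonal hitting time then supplies the factor $1/2$. The step you flag as the main obstacle is routine and needs no de Finetti representation. $\Lambda$ is a measurable function of the post-$\tau$ trajectory. By the strong Markov property, that trajectory, conditionally on $\mathcal{F}_\tau$ on $\{W_\tau=(a,a)\}$, is simply an urn started at $(a,a)$.

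Your approach also gives a sharper result. On $\{\tau=\infty\}$ one has $j_n<i_n$ for all $n$, hence $\Lambda\le 1/2$ there. Since $\PROB(\Lambda=1/2)=0$, your inequality is in fact the identity $\PROB(D_M)=2\,\PROB(\Lambda>1/2)=2^{1-M}$. So the polynomial can be taken to be the constant $2$, and no polynomial slack is needed. The same argument shows that the walk started at $(a,b)$ with $a>b$ hits the diagonal with probability exactly $2\,\PROB(\mathrm{Beta}(b,a)>1/2)$.

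The only cost is that your argument relies on the exact Pólya structure, through the explicit Beta law of the limit. It therefore does not transfer verbatim to non-linear attachment weights. The paper only uses the linear case, so your version is a drop-in replacement. It also simplifies the later summations of the form $\sum_n P(n^{1/2})2^{-n^{1/2}}$ to $\sum_n 2^{1-n^{1/2}}$.
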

Recall that by the definition of Jordan centrality,
\begin{align*}
    \psi_n (v) \le \psi_n(1) \iff |(T_n,1)_{v\downarrow}| \ge |(T_n,v)_{1\downarrow}|~.
\end{align*}
The relative Jordan ranking of a vertex $n$ compared with the root evolves as a $2$-dimensional random walk on $\mathbb{N}\times \mathbb{N}$ with the transition probabilities as in Lemma \ref{galashinLemma} and with starting state $W_0 = (|(T_n,n)_{1\downarrow}|,1)$.

The next lemma follows from the almost sure convergence of the relative sizes of the subtrees to a Dirichlet-distributed random variable---see Jog and Loh \cite{persistanceJogLoh}. 
\begin{lemma} \label{lowerboundsubtreesize}
    For every $v\in \cup_{n=1}^{\infty}V_n$ there exists a random variable with absolutely continuous distribution $\xi_v$  such that 
    \begin{equation*}
        |(T_n,1)_{v\downarrow}| \ge \frac{n}{\xi_v}~,
    \end{equation*}
    for all $n\ge v$. 
\end{lemma}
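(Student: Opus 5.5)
The plan is to build $\xi_v$ directly from the Pólya-urn description of the subtree of $v$, as in the proof of Lemma \ref{finitechangeneighbors}. Fix $v$ and write $S_n=|(T_n,1)_{v\downarrow}|$ for $n\ge v$, so $S_v=1$.

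\textbf{Step 1: the urn limit.} The pair $(S_n,n-S_n)$ evolves for $n\ge v$ as a two-colour Pólya--Eggenberger urn started from $(1,v-1)$. A new vertex joins the subtree of $v$ with probability $S_n/n$. By the standard almost sure convergence results already invoked in the paper, $S_n/n\to\beta_v$ almost surely, where $\beta_v\sim\text{Beta}(1,v-1)$. In particular $\beta_v>0$ almost surely.

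\textbf{Step 2: a finite random bound.} Define
\[
\zeta_v=\sup_{n\ge v}\frac{n}{S_n}.
\]
Each term is finite because $S_n\ge 1$. Moreover $n/S_n\to 1/\beta_v<\infty$ almost surely, so only finitely many terms exceed $2/\beta_v$. Hence $\zeta_v<\infty$ almost surely, and by construction $S_n\ge n/\zeta_v$ for every $n\ge v$. This already gives the inequality in the statement. Note that $\zeta_v\ge v$ from the term $n=v$, which is consistent with $S_v=1$.

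\textbf{Step 3: absolute continuity, the main obstacle.} The supremum $\zeta_v$ need not be absolutely continuous. With positive probability the supremum is attained at $n=v$, since the event $\{S_n\ge n/v \text{ for all } n\ge v\}$ has positive probability. On that event $\zeta_v=v$, so $\zeta_v$ has an atom at $v$. Because the lemma only asserts the existence of some random variable dominating $n/S_n$, I would smooth this away. Enlarge the probability space by an independent $U\sim\text{Uniform}[0,1]$ and set $\xi_v=\zeta_v+U$. Since $\xi_v\ge\zeta_v$, the bound $S_n\ge n/\xi_v$ still holds for all $n\ge v$. The sum of a random variable and an independent absolutely continuous random variable is absolutely continuous; its density is $\EXP f_U(y-\zeta_v)$. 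Finally, $\xi_v<\infty$ almost surely.

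If one prefers not to enlarge the space, the first thing I would try is to replace $U$ by a continuous functional of the tree that is independent of the subtree sizes of $v$. An example is the limiting fraction $\lim_n |(T_n,1)_{w\downarrow}|/n$ of some fixed vertex $w$ outside the subtree of $v$, suitably normalised; this is Beta-distributed. However, this needs a check of independence and is more delicate, so the auxiliary uniform is the cleaner route.
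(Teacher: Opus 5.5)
Your argument is correct and follows essentially the paper's route: the paper only asserts that the lemma follows from the almost sure Pólya-urn (Dirichlet, here $\text{Beta}(1,v-1)$) convergence of $|(T_n,1)_{v\downarrow}|/n$, citing Jog and Loh, which is exactly your Steps 1--2. Your Step 3 handles a point the paper leaves implicit, namely that $\sup_{n\ge v} n/S_n$ is atomic, and independent smoothing fixes it; your tree-measurable alternative is also easy and needs no independence check, e.g.\ $\lceil\zeta_v\rceil+\lim_n |(T_n,1)_{2\downarrow}|/n$ works, because a countably-valued variable plus an absolutely continuous one is absolutely continuous under arbitrary dependence.
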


\begin{proof}{(of Theorem \ref{Hoppelargest})}
    The proof follows the argument of Thörnbald \cite{dominatingcolour}.
Let $\mathcal{H}_n$ be the event that the color ``born'' at time $n$ ever becomes as large as the leading color at time $n$. Let $M_n$ be the size of the leading color at time $n$. We define the event $\mathcal{C}_m = \cap_{n\ge m} [M_n \ge n^{1/2}]$. 

Then, one may consider the events $[\mathcal{H}_n \cap \mathcal{C}_m]$, 
\begin{align*}
    \sum_{n=1}^{\infty} \PROB (\mathcal{H}_n \cap \mathcal{C}_m) &\le  \sum_{n=1}^{\infty} \frac{P(n^{1/2})}{2^{n^{1/2}}} < \infty,
\end{align*}
where $P$ is as in Lemma \ref{galashinLemma}. Hence, the events $[\mathcal{H}_n \cap \mathcal{C}_m]$ occur finitely often, almost surely. By Lemma \ref{lowerboundsubtreesize}, $\PROB(\mathcal{C}_m) \to 1$ as $m\to \infty$.  This implies that the events $\mathcal{H}_n$ occur finitely often almost surely. Hence, only finitely many colors can ever be the largest. 
In a Pólya urn of two colors, the size of the two colors are equal at most finitely many times almost surely (see proof of Lemma \ref{finitechangeneighbors}), hence among the finitely many colors that become the largest at some point in time, there is one that after some random but almost surely finite time is the only one that is the largest.
\end{proof}

We may rephrase  Theorem \ref{Hoppelargest} as follows:

\begin{corollary}\label{alternateformHoppeDominating}
  Let $\mathcal{I}_n=\arg \displaystyle \max_{v\in N_n(1)} |(T_n,1)_{v\downarrow}|$. There exists an integer-valued random variable
$\mathcal{I}>1$,  such that, almost surely,
    \begin{gather*}
        \lim_{n \to \infty} \mathcal{I}_n = \mathcal{I}~.
    \end{gather*}
\end{corollary}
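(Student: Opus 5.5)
The corollary is a translation of Theorem \ref{Hoppelargest} into tree language, and the proof consists of making that translation precise.

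The first step is to set up the coupling between the subtrees hanging off the root and Hoppe's urn. At time $n$ the urn holds one black ball together with, for each neighbor $v\in N_n(1)$, exactly $|(T_n,1)_{v\downarrow}|$ balls of a colour labelled by $v$. The total is $n$ balls.

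We check that the transitions agree. In the tree, vertex $n+1$ attaches to a uniformly chosen vertex of $[n]$. If it attaches to the root, a new neighbor $n+1$ appears with a subtree of size one; this is drawing the black ball and adding a new colour. If it attaches to a vertex of $(T_n,1)_{v\downarrow}$, that subtree grows by one; this is drawing a ball of colour $v$, which happens with probability $|(T_n,1)_{v\downarrow}|/n$. Hence the processes $\big(|(T_n,1)_{v\downarrow}|\big)_{v\in N_n(1)}$ and the colour counts of Hoppe's urn have the same law, and we may build both on one probability space. Under this coupling, colours are identified with neighbors of the root, and the label of each colour is the arrival time of the neighbor that created it.

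Next we apply Theorem \ref{Hoppelargest}. It gives an almost surely finite random time $\tau$ after which the leading colour never changes. Let $\mathcal{I}$ denote the neighbor of the root corresponding to that colour. For $n\ge\tau$, the argmax $\mathcal{I}_n$ is then constant and equal to $\mathcal{I}$. Its uniqueness, which matters because ties would make the argmax ambiguous, is already part of the conclusion of Theorem \ref{Hoppelargest}: after $\tau$ a single colour is strictly largest, by the finitely-many-equalities argument from Lemma \ref{finitechangeneighbors}. Thus $\mathcal{I}_n\to\mathcal{I}$ almost surely, and $\mathcal{I}$ is integer-valued, being a vertex label.

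Finally, $\mathcal{I}>1$ holds trivially, since every neighbor of the root is a vertex other than $1$. I expect no real obstacle here. The only point needing care is that the labels are consistent over time: the colour created at time $m$ must always correspond to the same vertex $m$. This is true because a colour is born exactly when a vertex attaches to the root, and that vertex keeps its label forever.
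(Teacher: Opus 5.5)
Your proposal is correct and takes the same approach as the paper. The paper obtains this corollary as a direct rephrasing of Theorem \ref{Hoppelargest}, using the identification of the root with the black ball and of each neighbor's subtree with a colour in Hoppe's urn; you simply make that coupling and the label bookkeeping explicit, and you correctly trace the eventual uniqueness of the leader to the finitely-many-ties step at the end of the proof of Theorem \ref{Hoppelargest}.
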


To prove persistence, we need the following additional result. 
\begin{lemma}
    Let $D_n^*= \max_{v\in N_n(1) }|(T_n,1)_{v\downarrow}|$. There exists almost surely a finite time $M$ such that $n-|D_n^*| \ge \xi' n$ for all $n\ge M$, where $\xi'$  is a positive random variable with $\PROB(\xi'<\infty)=1$.
\end{lemma}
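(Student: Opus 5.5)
We will deduce the statement from Lemma \ref{maxSubtreeDickman}, and only need a little care to turn an almost sure limit into an eventual linear lower bound. By Lemma \ref{maxSubtreeDickman}, $D_n^*/n \to D^\dagger$ almost surely, where $D^\dagger$ has the max-Dickman-Goncharov distribution. The first step is to check that $D^\dagger<1$ almost surely. Since $D^\dagger=\max\{U_1,(1-U_1)U_2,\ldots\}$, on the event $D^\dagger=1$ some term must equal one. This forces $U_1=1$, or $U_k=1$ together with $U_1=\cdots=U_{k-1}=0$, and each of these countably many events has probability zero. Equivalently, the law of $D^\dagger$ has density $g$ with $g(x)=1/x$ on $[1/2,1]$, so it has no atom at $1$. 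Hence $1-D^\dagger>0$ almost surely.

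Next we would set $\xi' := (1-D^\dagger)/2$. This is a positive random variable, and it is bounded by $1/2$, so certainly $\PROB(\xi'<\infty)=1$. From the almost sure convergence
\[
\frac{n-D_n^*}{n} \longrightarrow 1-D^\dagger = 2\xi' > 0,
\]
there is, on the probability-one event where the convergence holds and $D^\dagger<1$, a finite (random) $M$ such that for all $n\ge M$ we have $(n-D_n^*)/n \ge \xi'$. Concretely, we take
\[
M=\inf\{m : |D_k^*/k - D^\dagger| \le \xi' \text{ for all } k\ge m\}.
\]
This gives $n-D_n^*\ge \xi' n$ for all $n\ge M$, which is the claim. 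Here $|D_n^*|$ in the statement is read as the number $D_n^*$.

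No step is a genuine obstacle. The only point to verify carefully is that the limit $D^\dagger$ is strictly less than $1$ almost surely rather than merely in distribution. This matters because $M$ must be finite on an almost sure event, and that requires the pathwise limit to be bounded away from $1$. Lemma \ref{maxSubtreeDickman} gives almost sure convergence to a random variable whose law is max-Dickman-Goncharov, and that law has a density, so this is immediate.

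An alternative route, if one preferred a uniform-in-$n$ statement from time $1$, would use Lemma \ref{lowerboundsubtreesize}. One would apply it to the vertex $1$ side of the edge between the root and the eventual leading neighbour $\mathcal{I}$ from Corollary \ref{alternateformHoppeDominating}. For $n$ large, the complement of the largest subtree equals $|(T_n,\mathcal{I})_{1\downarrow}|$, and Lemma \ref{lowerboundsubtreesize}, applied to that side, bounds this below by a linear quantity. The first argument is simpler, and we would use it.
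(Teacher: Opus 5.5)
Your proof is correct, but it takes a different route from the paper. The paper's proof is a single sentence appealing to Lemma \ref{lowerboundsubtreesize}: for a fixed vertex, the P\'olya-urn limit of its subtree fraction is strictly less than $1$, so that subtree eventually has size at most $Wn$ with $W<1$.

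Taken literally, that per-vertex bound does not control $D_n^*$. This is a maximum over a growing and changing set of neighbours of the root. To complete the paper's route one needs an extra ingredient that identifies which neighbour attains the maximum. One option is the eventual leader $\mathcal{I}$ of Corollary \ref{alternateformHoppeDominating}, which rests on Galashin's estimate through Theorem \ref{Hoppelargest}. One then applies the urn argument to the root side $|(T_n,\mathcal{I})_{1\downarrow}|$. This is essentially your sketched alternative.

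Your main argument avoids this step. It uses the almost sure convergence of the normalised maximum itself (Lemma \ref{maxSubtreeDickman}) and the fact that the max-Dickman-Goncharov law has no atom at $1$. As a result it needs only the preliminary results, handles the maximum directly, and gives the explicit choice $\xi'=(1-D^\dagger)/2\le 1/2$.

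The paper's formulation instead ties the bound to the eventual leading neighbour $\mathcal{I}$, which is the object used in the proof of Theorem \ref{persistencerankrootJordan}. For the lemma as stated, your version is the cleaner and more self-contained one.
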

\begin{proof}
    This follows from Lemma \ref{lowerboundsubtreesize} as one can upper-bound the size of the subtree by a random variable $W<1$ times the tree size.
\end{proof}

\begin{theorem}\label{persistencerankrootJordan}
Let $R_n$ be the Jordan rank of the root at time $n$. Then, there exists an integer-valued random variable $R$ such that with probability one, $\lim_{n\to \infty} R_n = R$. 
\end{theorem}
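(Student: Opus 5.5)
Our plan is to show that, almost surely, the set $\mathcal{S}_n$ of vertices at least as Jordan-central as the root is eventually constant. Then $R_n=|\mathcal{S}_n|$ (with the pessimistic tie-breaking, up to the finitely many tie times) is eventually constant. By Lemma \ref{propertiesJordan}(i), a vertex $v\neq 1$ is at least as central as the root in $T_n$ if and only if $|(T_n,1)_{v\downarrow}|\ge |(T_n,v)_{1\downarrow}|$. If $v$ is not in the largest root subtree, then $|(T_n,1)_{v\downarrow}|\le n-D_n^*$, while $\psi_n(v)\ge D_n^*$ and $\psi_n(1)=D_n^*$. Ties aside, such a $v$ can only beat the root when $D_n^*\le n/2$, in which case the root is the centroid and no vertex beats it.

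\textbf{Step 1 (reduction to one branch).} By Corollary \ref{alternateformHoppeDominating}, after a finite random time $\tau$ the largest root subtree is always the one rooted at the fixed neighbor $\mathcal{I}$. From then on, only vertices of $(T_n,1)_{\mathcal{I}\downarrow}$ can be more central than the root. The ties between the root and any fixed vertex occur finitely often by Lemma \ref{twoverticesJordan}.

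\textbf{Step 2 (each fixed vertex eventually settles).} For a fixed vertex $v$ in the $\mathcal{I}$-branch, the pair $(|(T_n,1)_{v\downarrow}|,\,n-|(T_n,1)_{v\downarrow}|)$ is a Pólya urn. Hence $|(T_n,1)_{v\downarrow}|/n\to X_v$, an absolutely continuous limit. Meanwhile $\psi_n(1)/n=D_n^*/n\to D^\dagger$. Comparing $1-X_v$ with $D^\dagger$, which differ almost surely, shows that for $n$ large the status of $v$ (more central than the root or not) is constant. Since there are countably many vertices, this holds simultaneously for every fixed $v$.

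\textbf{Step 3 (only finitely many vertices ever beat the root).} This is the main obstacle. A vertex $v$ becomes more central than the root at time $n$ only if $|(T_n,1)_{v\downarrow}|\ge n-D_n^*+\ldots$, roughly $|(T_n,1)_{v\downarrow}|\ge n-D_n^*$. By the lemma just above, $n-D_n^*\ge\xi' n$ for $n\ge M$.

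We plan to adapt the argument of Theorem \ref{Hoppelargest}. Let $\mathcal{H}_m$ be the event that vertex $m$ ever satisfies $|(T_n,1)_{m\downarrow}|\ge |(T_n,m)_{1\downarrow}|$ for some $n\ge m$. The pair $(|(T_n,m)_{1\downarrow}|,|(T_n,1)_{m\downarrow}|)$ is exactly the walk of Lemma \ref{galashinLemma}, started from $(m-1,1)$. Hence $\PROB(\mathcal{H}_m)\le P(m-1)/2^{m-1}$, which is summable. By Borel–Cantelli, only finitely many vertices $m$ ever become at least as central as the root. This avoids the need to intersect with a good event, because the starting point $m-1$ is deterministic.

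Combining the three steps, only a finite random set of vertices is ever a candidate. Each of them eventually has a fixed status relative to the root, and all ties are finitely many. Therefore $\mathcal{S}_n$, and hence $R_n$, is eventually constant; call its limit $R$. The delicate point to check is the treatment of ties in the tie-breaking rule, which is handled by Lemma \ref{twoverticesJordan} applied to the finitely many candidates.
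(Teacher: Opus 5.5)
Your overall plan matches the paper's: Galashin's lemma plus Borel--Cantelli over newly arriving vertices, then finitely many ties per pair via Lemma \ref{twoverticesJordan}. However, Step 3 rests on a false claim. When $m$ arrives, the first coordinate of the walk is $|(T_m,m)_{1\downarrow}|$, the number of vertices whose path to $m$ passes through the root. This equals $m-|(T_m,1)_{u_1\downarrow}|$, where $u_1$ is the neighbour of the root on the path from $1$ to $m$.

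\begin{itemize}
\item It equals $m-1$ only when $m$ attaches directly to the root.
\item For a vertex arriving in the dominant root branch it is about $(1-D^\dagger)m$. This is random and can be an arbitrarily small fraction of $m$.
\item The walk is also lazy: attachments inside $(T_n,1)_{u_1\downarrow}$ but outside $(T_n,1)_{m\downarrow}$ move neither coordinate. This part is harmless, since only the jump chain matters.
\end{itemize}

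Consequently the bound $\PROB(\mathcal{H}_m)\le P(m-1)/2^{m-1}$ is wrong, and in fact $\sum_m\PROB(\mathcal{H}_m)=\infty$. With the pessimistic tie-breaking, $R_n\le 1+\sum_{m\le n}\mathbbm{1}_{\mathcal{H}_m}$. So summability would give $\sup_n\EXP R_n<\infty$, contradicting $\EXP R_n=\Theta(\log n)$ (Corollary \ref{JordanExpected}). Heuristically, $D^\dagger$ has density bounded away from $0$ near $1$, so averaging $2^{-(1-D^\dagger)m}$ over $D^\dagger$ makes $\PROB(\mathcal{H}_m)$ of order $1/m$. The first Borel--Cantelli lemma therefore cannot be applied unconditionally.

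The missing ingredient is exactly the ``good event'' you dismissed; the paper supplies it through the events $M_m$. Let $b_m=|(T_m,m)_{1\downarrow}|$.

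\begin{itemize}
\item Conditionally on $T_m$, the future of the pair is a P\'olya walk started from $(b_m,1)$, and the event $\{b_m\ge\sqrt m\}$ is determined by $T_m$.
\item Hence $\sum_m\PROB(\mathcal{H}_m\cap\{b_m\ge\sqrt m\})\le\sum_m\sup_{b\ge\sqrt m}P(b)2^{-b}<\infty$.
\item By the lemma you quote at the start of Step 3, almost surely $b_m\ge m-D_m^*\ge\xi' m\ge\sqrt m$ for all large $m$. So only finitely many $\mathcal{H}_m$ occur.
\end{itemize}

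With this correction the rest of your argument goes through. For the remaining steps it suffices to note two facts. First, $|(T_n,1)_{v\downarrow}|-|(T_n,v)_{1\downarrow}|$ changes by at most $1$ per step, so every change of status passes through equality, which is a tie. Second, ties with any fixed vertex occur only finitely often.
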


\begin{proof}
    Define the events
    \begin{align*}
    E_n &= \left\{\cup_t \left[ |(T_t,1)_{n\downarrow}| \ge |(T_t,n)_{1\downarrow}| \right] \right\}~,  \\H_n &= \{n \in (T_n,1)_{\mathcal{I}\downarrow}  \}~, \\ M_m &= \left\{\cap_{n\ge m} \left[|(T_n,n)_{1\downarrow}| \ge n^{1/2} \right] \right\}~,
    \end{align*}
where $\mathcal{I}$ is defined in Corollary \ref{alternateformHoppeDominating}.
    By Theorem \ref{Hoppelargest}, only finitely many vertices not in the eventually largest subtree can become more central than the root almost surely. Hence, if one can prove that among the vertices that attach to the eventually largest subtree only finitely many become the most central, then this implies persistence (by combining the fact that for any two vertices they change order in the ranking only finitely many times almost surely, which follows from Lemma \ref{twoverticesJordan}). If $P$ is as in Lemma \ref{galashinLemma}, then 
    \begin{align*}
        \sum_{n=1}^{\infty} \PROB(E_n\cap H_n\cap M_m) \le \sum_{n=1}^{\infty}\frac{P(n^{1/2})}{2^{n^{1/2}}} < \infty~. 
    \end{align*}
    By the Borel-Cantelli lemma, only finitely many of the events $[E_n\cap H_n\cap M_m]$ occur over $n$ almost surely. Note that this holds for all $m$ and given that $\PROB(M_m)\to 1$ as $m \to \infty$, it follows that the number of vertices in the eventually largest subtree that become more central than the root is finite almost surely. This implies that the number of vertices that become more central than the root is finite almost surely. 
\end{proof}

\section{Betweenness centrality}
\label{sec:betweenness}

In this section, we discuss properties of betweenness centrality in uniform attachment trees.

Durant and Wagner \cite{distributionBetweenness} proved the tightness of the index of the betweenness center.
In particular, they showed that, for every $k\in [n]$,
\[
        \PROB(I_n \ge k) < 16\left(\frac{k}{3}+1\right)\left(\frac{3}{4}\right)^k~.
\]
In particular, this bound implies that $\EXP I_n < 20$.

To the best of our knowledge, betweenness centrality has not been studied in the context of root estimation or persistence.

\subsection{Probability that the root is the centroid}

We start by providing a lower bound on the probability that the betweenness center equals the root.
We make use of the representation $\mathcal{B}'_{T_n}(v) = \sum_{u\in T_n:uv\in E} |(T_n,v)_{u\downarrow}|^2$ for the betweenness centrality of a vertex. Note that smaller values of $\mathcal{B}'_{T_n}(v)$ correspond to more central vertices.

\begin{theorem}\label{probabilityBetweenness}
    Let $R_n$ be the betweenness rank of the root at time $n$. Then, 
    \begin{equation*}
        \liminf_{n\to\infty}\PROB(R_n=1) \ge  3-\sqrt{5}-\log(2) \approx 0.070784~.
    \end{equation*}
\end{theorem}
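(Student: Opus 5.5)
The plan is to pass to the almost-sure limit of the normalized subtree sizes at the root, find a sufficient condition on these limits under which the root is the betweenness center, and then bound the probability of that condition from below. By Lemma~\ref{maxSubtreeDickman} and the stick-breaking representation in its proof, the limits $X_i=\lim_n |(T_n,1)_{v_i\downarrow}|/n$ for the neighbours $v_i$ of the root have the law $X_i=\prod_{k<i}(1-U_k)U_i$, that is, GEM$(1)$. We then have
\[
\mathcal{B}'_{T_n}(1)/n^2\to \sum_i X_i^2 ,\qquad
\mathcal{B}'_{T_n}(v_j)/n^2\to (1-X_j)^2+\sum_k Y_{jk}^2 ,
\]
where the $Y_{jk}$ are the limiting fractions of the subtrees hanging below the children of $v_j$. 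Every event below will be defined by strict inequalities between these limits. Since the limiting laws are absolutely continuous, boundary events have probability zero, and the portmanteau theorem (or Fatou along the a.s.\ convergence) turns a lower bound on the probability of a strict event for the limits into a lower bound on $\liminf_n \PROB(R_n=1)$.

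\textbf{Step 1: reduce to the neighbours of the root.} I will show that it suffices for the root to beat each of its neighbours. Take a vertex $w$ at distance at least $2$ from the root, and let $v_j$ be the neighbour of the root on the path to $w$. Then $w$ has a component of size at least $n-|(T_n,1)_{v_j\downarrow}|+1$ in $T\setminus w$. Hence
\[
\mathcal{B}'(w)\ge (1-X_j)^2n^2(1+o(1)) .
\]
So on the event
\[
\sum_i X_i^2<\min_j (1-X_j)^2 ,
\]
the root beats every vertex in the limit, not just its neighbours. Since $(1-X_j)^2$ is smallest for the largest piece, this event equals
\[
\mathcal{E}=\Big\{\textstyle\sum_i X_i^2<(1-D)^2\Big\},\qquad D=\max_i X_i ,
\]
with $D$ max-Dickman-Goncharov. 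This is a strict event, so the limiting argument above applies.

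\textbf{Step 2: lower-bound $\PROB(\mathcal{E})$.} Note that $\mathcal{E}$ forces $D<1/2$, and $\PROB(D<1/2)=1-\log 2$ because the density of $D$ is $1/x$ on $[1/2,1]$. So I write
\[
\PROB(\mathcal{E})\ge (1-\log 2)-\PROB\Big(D<1/2,\ \textstyle\sum_i X_i^2\ge (1-D)^2\Big).
\]
To control the subtracted term, I use $\sum_{i\ne \max}X_i^2\le D(1-D)$. Then failure of $\mathcal{E}$ requires $D^2+D(1-D)\ge(1-D)^2$, i.e.\ $D\ge a:=(3-\sqrt5)/2$. The subtracted term is therefore at most the probability that $D\in[a,1/2)$ and the crude bound is nearly tight. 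I will estimate this with the first-moment (intensity) formula for GEM$(1)$: the expected number of pieces in $[x,x+dx]$ is $dx/x$, and conditionally on a piece of size $x$ the remaining pieces are $(1-x)$ times an independent GEM$(1)$. This should give a one-dimensional integral over $x\in[a,1/2)$ whose value I expect to be at most $\sqrt5-2$, the constant needed to reach $3-\sqrt5-\log 2=(1-\log2)-(\sqrt5-2)$. One explicit candidate for that integral is obtained by bounding the other pieces' squares through $(1-x)^2\sum_k Z_k^2\le(1-x)^2\max_k Z_k$ with $Z$ a fresh GEM$(1)$ and using the density bound $2$ for the max-Dickman-Goncharov law.

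\textbf{Main obstacle.} The hardest step is this last integral. It has to be sharp enough to produce exactly $\sqrt5-2$, which involves $a=(3-\sqrt5)/2$. If the crude bound $\sum_{i\ne\max}X_i^2\le D(1-D)$ turns out too lossy, I will first try replacing it by $(1-D)^2\,\EXP\big[\sum_k Z_k^2\big]=(1-D)^2/2$ via Markov's inequality inside the integral, and then optimise the threshold.
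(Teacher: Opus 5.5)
Your argument is essentially the paper's: both compare $\mathcal{B}'_{T_n}(1)\le n\,D_n^*$ with the lower bound $(n-D_n^*)^2$, which holds for every non-root vertex, so the question reduces to the event $D<a=(3-\sqrt5)/2$; both then subtract $\PROB(a\le D<\tfrac12)$ from $\PROB(D<\tfrac12)=1-\log 2$. The step you flag as the main obstacle takes one line, and the paper closes it the same way: the max-Dickman-Goncharov density is bounded by $2$, so $\PROB(a\le D<\tfrac12)\le 2(\tfrac12-a)=\sqrt5-2$ and no intensity computation is needed (your refined candidate using $\sum_k Z_k^2\le\max_k Z_k$ would in fact give the slightly smaller value $2\log(3-\sqrt5)+3-\sqrt5\approx 0.225$).
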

\begin{proof}
    Suppose $D_n^\dagger\defeq \frac{1}{n}\max_{v\in N_n(1)}|(T_n,1)_{v\downarrow} |<\varepsilon$ for some $\varepsilon\in(0,1]$, thus implying
    \begin{equation*}
        \mathcal{B}'_{T_n}(1) = \sum_{u \in N_n(1) }|(T_n,1)_{u\downarrow}|^2 \le n D_n^{\dagger}\sum_{u \in N_n(1) }|(T_n,1)_{u\downarrow}| \le n^2\varepsilon~.
    \end{equation*}
    On the other hand, for any vertex $v\in V(T_n)\backslash1$,
    \begin{equation*}
        \mathcal{B}'_{T_n}(v) = \sum_{u \in N_n(v) }|(T_n,v)_{u\downarrow}|^2 \geq (n-|(T_n,1)_{v\downarrow}|)^2 \ge (n-n\varepsilon)^2 = n^2(1-\varepsilon)^2~.
    \end{equation*}
    Therefore, if $n^2(1-\varepsilon)^2 \geq n^2\varepsilon $  then $\mathcal{B}_{T_n}(v) \geq \mathcal{B}_{T_n}(1)$.
    In particular,
 $\varepsilon\le \frac{3- \sqrt{5}}{2}$ implies that  $\mathcal{B}'_{T_n}(v) \geq \mathcal{B}'_{T_n}(1)$.
 Hence, if $D_n^\dagger< \frac{3- \sqrt{5}}{2}$, then all vertices are less central than the root.
 Using the fact that $D^\dagger_n$ converges in distribution to a max-Dickman-Goncharov random variable, we obtain
\[
        \lim_{n\to \infty}\PROB\left(D_n^\dagger\leq\frac{3- \sqrt{5}}{2} \right) \ge 3-\sqrt5-\log(2)~.
\]
\end{proof}

\subsection{Tightness, expected rank of the root and root estimation}

In this section, we investigate the quality of ranking vertices by their betweenness centrality
for root estimation. We show that betweenness centrality has similar properties to those
of Jordan centrality in this regard. The key is the following result whose proof uses
similar arguments as those of the proof of Theorem \ref{JordanUpperBoundProbability}.

\begin{lemma} \label{BetweennessRootEstimation}
    Let $R_n$ be the betweenness rank of the root at time $n$. Then, there exists a constant $c>0$ such that for all $x\in [n]$,
    \begin{equation*}
        \PROB(R_n>x)\leq \frac{c}{x}~.
    \end{equation*}
\end{lemma}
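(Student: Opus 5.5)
We follow the proof of Theorem \ref{JordanUpperBoundProbability}: bound the number of vertices more betweenness-central than the root by a quantity controlled by the largest root subtree. Write $D_n^\dagger=\frac1n\max_{v\in N_n(1)}|(T_n,1)_{v\downarrow}|$ and let $\mathcal{S}_n$ be the set of vertices $v$ with $\mathcal{B}'_{T_n}(v)\le \mathcal{B}'_{T_n}(1)$, so that $R_n\le|\mathcal{S}_n|$. Fix $x\in(0,1/100]$ and split as in (\ref{equationRank}):
\[
\PROB(R_n>t)\le \PROB(|\mathcal{S}_n|>t,\ D_n^\dagger\le 1-x)+\PROB(D_n^\dagger>1-x).
\]
The second term is at most $2x+o(1)$, because $D_n^\dagger$ converges to a max-Dickman-Goncharov limit whose density is bounded by $2$ (Lemma \ref{maxSubtreeDickman}).

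\textbf{Step 1: a size condition for beating the root.} For $v\ne 1$ we have $\mathcal{B}'_{T_n}(v)\ge (n-|(T_n,1)_{v\downarrow}|)^2$. We also have $\mathcal{B}'_{T_n}(1)\le n\cdot nD_n^\dagger\le n^2(1-x)$, by the computation in the proof of Theorem \ref{probabilityBetweenness}. Hence every $v\in\mathcal{S}_n$ satisfies
\[
|(T_n,1)_{v\downarrow}|\ge n\bigl(1-\sqrt{1-x}\bigr)\ge nx/2 .
\]
So $\mathcal{S}_n$ is contained in the set $\mathcal{A}_n$ of vertices whose root-subtree has size at least $nx/2$. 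That set is a subtree containing the root, since subtree sizes decrease along paths away from $1$. (Betweenness need not be monotone along paths, so we do not try to show $\mathcal{S}_n$ itself is a subtree; containment in $\mathcal{A}_n$ is enough.)

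\textbf{Step 2: counting the subtree.} Pass to the Ulam--Harris embedding and the limiting values $X_v$, as in the proof of Theorem \ref{JordanUpperBoundProbability}. For fixed $t$, $\limsup_n\PROB(|\mathcal{A}_n|>t)\le \PROB(|\mathcal{A}|\ge t)$, where $\mathcal{A}=\{v: X_v\ge x/2\}$ is a.s. finite. This is the step where the Jordan proof needed persistence; here it only needs the a.s. convergence of finitely many vertices' values, since on $|\mathcal{A}_n|>t$ some $t$ vertices among the first $\lceil 2/x\rceil$-generation candidates have large values. Split $\mathcal{A}$ into leaves $\mathcal{L}$, branching vertices $\mathcal{B}$ and one-child vertices $\mathcal{O}$. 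Leaves have disjoint subtrees of value at least $x/2$, so $|\mathcal{L}|\le 2/x$ and $|\mathcal{B}|<|\mathcal{L}|$.

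For $\mathcal{O}$, reuse (\ref{secondlargest})--(\ref{lowerboundhalf}). Conditionally on a vertex having exactly one child of value at least $x/2$, its second-largest child has value at least $x/4$ with probability at least $1/2$. These second children have disjoint subtrees, so their number is at most $4/x$. The number of such children dominates $\text{Bin}(|\mathcal{O}|,1/2)$, up to the independence structure, which is handled exactly as in the Jordan proof. Chebyshev--Cantelli then gives, for $t>C/x$,
\[
\PROB(|\mathcal{A}|\ge t)\le \frac{C'}{t}.
\]

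\textbf{Step 3: optimize and extend to all $n$.} Combining the pieces gives $\limsup_n\PROB(R_n>t)\le C'/t+2x$. Taking $x=C''/t$ yields the $c/t$ bound asymptotically. For the uniform statement over all $n\ge1$ and $x\in[n]$, we either enlarge $c$ to cover small $n$, or run the same argument at finite $n$ directly using the Pólya-urn proportions. We view this as a formality, matching the Jordan case.

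\textbf{Main obstacle.} The delicate step is Step 2, in particular justifying the binomial domination for the one-child vertices. The conditioning events for different vertices of $\mathcal{O}$ are not independent, and the bound (\ref{lowerboundhalf}) must hold uniformly in the conditioning. We will first try to use the stick-breaking independence of the $U_i$ across distinct parents, revealing the tree top-down.
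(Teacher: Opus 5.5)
Your route is the paper's. Its proof also replaces $\mathcal{S}_n$ by the set cut out by the necessary condition $(n-|(T_n,1)_{v\downarrow}|)^2<n\max_{u\in N_n(1)}|(T_n,1)_{u\downarrow}|$ and then reruns the Jordan counting. Your Step 1, with a deterministic threshold $nx/2$ on the event $D_n^\dagger\le 1-x$, is a correct version of this.

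\textbf{The gap is the bound you import in Step 2.} It is not true that, conditionally on $v$ having exactly one child of value at least $x/2$, its second-largest child has value at least $x/4$ with probability at least $1/2$. The second-largest child has value at most $X_v-x/2$, so if $X_v<3x/4$ the conditional probability is exactly $0$. It also stays far below $1/2$ for somewhat larger $X_v$: with $a=x/(2X_v)=0.6$ it is under $0.05$. Such vertices are not exceptional. The expected number of one-child vertices with $X_v\in[x/2,3x/4)$ is of order $1/x$, the same order as $|\mathcal{L}|$, so the domination by $\text{Bin}(|\mathcal{O}|,1/2)$ is false. This is a failure of uniformity in the conditioning, not of independence, so revealing the tree top-down will not repair it. Note that (\ref{lowerboundhalf}) as stated has the same defect: its left side vanishes once $x'>2/3$.

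\textbf{Two ways to repair it.}
\begin{itemize}
\item Apply the second-child argument only to $v\in\mathcal{O}$ with $X_v\ge cx$ for a suitable constant $c$, where the conditional probability is bounded below by a positive constant. Bound the remaining near-threshold one-child vertices separately: they lie on at most $|\mathcal{L}|+|\mathcal{B}|<4/x$ chain segments whose lengths have geometric tails by the branching property.
\item More simply, skip the decomposition. Write $N(r)=|\{v:X_v\ge r\}|$. The branching property together with the GEM$(1)$ intensities $p^{-1}\,dp$ and $(pq)^{-1}$ on $\{p+q<1\}$ gives $\EXP N(r)=1/r$ and $\mathrm{Var}\,N(r)=(2\log 2-1)/r$ for $r\le 1/2$. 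Chebyshev then yields $\PROB(|\mathcal{A}|\ge 4/x)\le (2\log 2-1)x/2$, and taking $x=4/t$ gives $\limsup_n\PROB(R_n>t)=O(1/t)$ with no binomial domination needed.
\end{itemize}

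\textbf{Step 3 is also not a formality.} The limit argument bounds $\PROB(R_n>t)$ only for fixed $t$ as $n\to\infty$. Enlarging $c$ over finitely many $n$ therefore says nothing about $t$ growing with $n$. The statement for all $x\in[n]$ needs finite-$n$ versions of the estimates for $D_n^\dagger$ and $|\mathcal{A}_n|$; the paper's Jordan proof likewise only establishes the $\limsup$ form.
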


\begin{proof}
    The proof of Lemma \ref{BetweennessRootEstimation} is analogous to that of Theorem \ref{JordanUpperBoundProbability} and is omitted for brevity. The main difference is that, keeping the notation as in the proof of Theorem \ref{JordanUpperBoundProbability}, the set $S_n$ is now defined by $S_n=\{v\in T_n: (n-|(T_n,1)_{v\downarrow}|)^2<n\max_{v\in N_n(1)}|(T_n,1)_{v\downarrow}|\}\cup\{1\}$, and similarly $S=\{v\in T_{\infty}:(1-\text{Value}(v))^2<D^\dagger\}\cup\{1\}$. With these modifications, the proof proceeds along the same lines.
\end{proof}

To see that the upper bound is of optimal order, observe that leaves are the least central
among all vertices. Hence, the argument of \cite{findingadam} mentioned in the discussion on Jordan centrality
applies. In particular, with probability $1/(n-1)$, the root is a leaf in $T_n$.

\begin{theorem}\label{BetweennessExpected}
    Let $R_n$ be the betweenness rank of the root at time $n$. Then, $\EXP R_n = \Theta (\log (n))$. 
  \end{theorem}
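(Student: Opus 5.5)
The plan is to obtain both bounds from the tail estimate of Lemma \ref{BetweennessRootEstimation} together with the leaf observation made just before the statement, exactly as Corollary \ref{JordanExpected} was derived from Theorem \ref{JordanUpperBoundProbability}.

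\emph{Upper bound.} Since $R_n\in[n]$, write
\[
\EXP R_n=\sum_{x=0}^{n-1}\PROB(R_n>x)\le 1+\sum_{x=1}^{n-1}\frac{c}{x}\le 1+c(1+\log n),
\]
using Lemma \ref{BetweennessRootEstimation} for each $x\in[n]$. This gives $\EXP R_n=\mathcal{O}(\log n)$.

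The one point to check is that Lemma \ref{BetweennessRootEstimation} holds uniformly in $n$ with a single constant $c$. Its proof, like that of Theorem \ref{JordanUpperBoundProbability}, is phrased through $\limsup_{n\to\infty}$. If only the limsup version is available, I would argue as follows. For each fixed $x$ there is an $n_0(x)$ beyond which $\PROB(R_n>x)\le 2c/x$. That alone is not uniform, so I would instead redo the argument at finite $n$. The set of vertices at least as central as the root is a subtree $\mathcal{S}_n$ containing $1$. Its leaves have disjoint subtrees of size at least $n-\sqrt{nD_n^*}$, so there are at most $n/(n-\sqrt{nD^*_n})$ of them. The sibling argument giving the $\mathrm{Bin}(\cdot,1/2)$ domination uses only the exchangeable stick-breaking structure of the children's subtree sizes, and this has a finite-$n$ analogue via the Pólya urn. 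I expect this uniformity issue to be the main (and only real) obstacle. Realistically I would accept the lemma as stated, since it is stated for all $n$ and $x\in[n]$.

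\emph{Lower bound.} Leaves are the least central vertices for betweenness: a leaf $v$ has $\mathcal{B}'_{T_n}(v)=(n-1)^2$, which is the maximum possible value. By the pessimistic tie-breaking rule, a leaf root (index $1$, the smallest) is ranked after every other leaf. So if the root is a leaf, then $R_n\ge n-L_n+1$, where $L_n$ is the number of leaves, and in fact $R_n=n$ since all leaves tie at the maximum.

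The root is a leaf in $T_n$ exactly when no vertex $3,\dots,n$ attaches to it. This has probability
\[
\prod_{k=2}^{n-1}\Bigl(1-\frac1k\Bigr)=\frac{1}{n-1}.
\]
More usefully, for every $x$ we have $\PROB(R_n>x)\ge \frac{1}{3x}$, by the argument of \cite{findingadam} cited for Jordan centrality. Consider the event that vertex $x$ lies on the side of the root away from all but roughly $n/x$ vertices. More precisely, take the event that the root has degree one at time $x$ and only a fraction of order $1/x$ of the tree ever hangs off the root's side. In that case every vertex on the path to the big side, and every vertex in the large subtree, has a smaller $\mathcal{B}'$ than the root. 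This is checked by comparing $(n-|(T_n,1)_{v\downarrow}|)^2$ against the root's value of about $(n(1-1/x))^2$.

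Summing gives
\[
\EXP R_n\ge\sum_{x=1}^{n}\PROB(R_n>x)\ge\sum_{x}\frac{1}{3x}=\Omega(\log n).
\]
If the transfer of the $1/(3x)$ bound from \cite{findingadam} needs justification, I would verify the comparison of $\mathcal{B}'$ values directly on that event, which is elementary.
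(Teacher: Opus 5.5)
Your plan follows the paper's route. The upper bound sums the $c/x$ tail of Lemma~\ref{BetweennessRootEstimation}. Your uniformity worry is legitimate: the proof of Theorem~\ref{JordanUpperBoundProbability} only controls $\limsup_{n}$, but the paper takes the uniform statement for granted in the same way. The lower bound transfers the $\Omega(1/x)$ tail of \cite{findingadam} using the fact that leaves are least central. The upper bound is fine. The one place where you go beyond the citation, however, is incorrect, and that is the part carrying the lower bound.

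\textbf{The lower-bound justification is wrong.} On your event the root side $r=|(T_n,2)_{1\downarrow}|$ is of order $n/x$, so in particular $r>1$. Hence the root is not a leaf and $\mathcal{B}'_{T_n}(1)<(n-1)^2$. Every leaf $v$ of the large subtree has $\mathcal{B}'_{T_n}(v)=(n-1)^2$ and is therefore \emph{less} central than the root. So it is false that every vertex of the large subtree beats the root. If it were true, $R_n>n/2$ would hold with probability of order $1/x$ for fixed $x$, contradicting $\PROB(R_n>n/2)\le 2c/n$ from the tail bound you use for the upper bound.

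Vertices on the path toward the centre can also lose. If vertex $2$ has a single child and all other children of the root are leaves, then $\mathcal{B}'_{T_n}(2)-\mathcal{B}'_{T_n}(1)=r^2+r-2n+2>0$ once $r^2\ge 2n$.

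Your proposed check also points the wrong way. $(n-|(T_n,1)_{v\downarrow}|)^2$ is a \emph{lower} bound on $\mathcal{B}'_{T_n}(v)$, so comparing it with the root's value can only certify that $v$ is less central. Finally, the event that the root is a leaf of $T_n$ gives only $\EXP R_n\ge n/(n-1)$.

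\textbf{What is needed:} an upper bound on $\mathcal{B}'_{T_n}(v)$ plus a counting step.
\begin{itemize}
\item Set $s_v=|(T_n,1)_{v\downarrow}|$. You have $\mathcal{B}'_{T_n}(1)\ge (n-r)^2$, and for $s_v\le n/2$, $\mathcal{B}'_{T_n}(v)\le (n-s_v)^2+s_v^2\le n^2-ns_v$. So every $v$ with $2r\le s_v\le n/2$ is strictly more central than the root.
\item Since $|(T_n,1)_{2\downarrow}|$ is uniform on $\{1,\dots,n-1\}$, $\PROB(r\le n/x)\ge 1/(2x)$ for $x\le n$. You do not need the root to be a leaf at time $x$.
\item Given its size, the subtree of $2$ is a uniform recursive tree. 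Each of its first $\lfloor x/20\rfloor$ vertices has subtree size at least $2n/x$ with probability bounded below by an absolute constant.
\item A reverse Markov argument then gives $\Omega(x)$ such vertices with conditional probability bounded below. Only $O(1)$ vertices in expectation have $s_v>n/2$, so they do not spoil the count.
\end{itemize}
This gives $\PROB(R_n>cx)\ge c'/x$ for $x\le \epsilon n$, and summing yields $\EXP R_n=\Omega(\log n)$. The paper itself only points to \cite{findingadam} at this step, so this counting argument is exactly what your write-up would have to supply.
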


\begin{corollary} \label{betweennessconfidenceroot}
     Let $H_{K}$ be the set of the top $K$ most central vertices according to the betweenness centrality. Then, $\exists \;C>0$ such that if $K = {C}/{\varepsilon}$ one has $\liminf_{n\to\infty} \PROB(1 \in H_{K})\ge 1-\varepsilon$. 
\end{corollary}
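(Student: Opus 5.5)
We derive the corollary from the tail bound of Lemma \ref{BetweennessRootEstimation}. The event $1\in H_K$ is exactly the event $R_n\le K$, because $H_K$ is the set of vertices with $\gamma_n(v)\le K$ and the pessimistic tie-breaking makes $\gamma_n$ a permutation of $[n]$. Hence
\[
\PROB(1\notin H_K)=\PROB(R_n>K).
\]

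Let $c$ be the constant of Lemma \ref{BetweennessRootEstimation} and set $C=c$. Fix $\varepsilon\in(0,1]$ and put $K=\lceil C/\varepsilon\rceil$, so that $K\ge C/\varepsilon$. There are two cases.
\begin{itemize}
\item If $n\ge K$, then $K\in[n]$ and the lemma gives $\PROB(R_n>K)\le c/K\le \varepsilon$.
\item If $n<K$, then $R_n\le n<K$ deterministically, so $\PROB(R_n>K)=0$.
\end{itemize}
In both cases $\PROB(1\in H_K)\ge 1-\varepsilon$ for every $n$, which is stronger than the required $\liminf_{n\to\infty}\PROB(1\in H_K)\ge 1-\varepsilon$.

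The only point needing care is how the lemma's constant is obtained. If one traces the analogue of the proof of Theorem \ref{JordanUpperBoundProbability}, the bound comes out as a $\limsup_{n\to\infty}$ bound:
\[
\limsup_{n\to\infty}\PROB(R_n\ge t)\le \frac{12}{12+t}+\frac{c'}{t}.
\]
This is still enough for the corollary, since the statement only concerns the $\liminf$ of $\PROB(1\in H_K)$. Taking $t=K$ gives $\limsup_{n\to\infty}\PROB(R_n> K)\le (12+c')/K$, so $C=12+c'$ suffices.

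The constant $c'$ depends on the betweenness version of the density bound for $D^\dagger$. Here the set $\mathcal S$ is defined by the condition $(1-\mathrm{Value}(v))^2<D^\dagger$. The Jordan argument used $\PROB(D^\dagger>1-x)\le 2x$. In the betweenness case we need a bound of the form $\PROB(D^\dagger>(1-x)^2)\le 4x$, which again follows because the max-Dickman-Goncharov density is bounded by $2$. This translation of the threshold from $1-x$ to $(1-x)^2$ is the only step that needs checking, and it only changes the constant.
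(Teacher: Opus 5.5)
Your argument is correct and matches how the corollary follows in the paper: $1\in H_K$ is exactly the event $\{R_n\le K\}$, and the tail bound $\PROB(R_n>K)\le c/K$ of Lemma \ref{BetweennessRootEstimation} with $C=c$ gives the claim (rounding $K$ up to an integer is harmless). Your remark that the Jordan-style proof only yields a $\limsup_{n\to\infty}$ bound is accurate; so is your point that the cutoff on $D^\dagger$ moves from $1-x$ to $(1-x)^2$ and, by the density bound $2$, changes only the constant, and this is why the corollary is stated with a $\liminf$.
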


\subsection{Persistence}

In this section, we prove the persistence of the betweenness center and also of the rank of the root.
We first show the persistence of the relative ranking among two vertices.
Our main result requires the following lemmas. 

\begin{lemma}\label{betweenConvergence1}
    Let $i,j\in V(T_n)$ be two vertices with $i<j$. Then there exist random variables $X,Y,Z$ such that
    $$
        \frac{|(T_n,i)_{j\downarrow}|}{n} \xrightarrow[\substack{n \to \infty}]{\text{a.s.}} X~,
        $$
        $$
        \frac{|(T_n,j)_{i\downarrow}|}{n} \xrightarrow[\substack{n \to \infty}]{\text{a.s.}} Y~,
        $$
        and
        $$
        \frac{n-|(T_n,i)_{j\downarrow}|-|(T_n,j)_{i\downarrow}|}{n} \xrightarrow[\substack{n \to \infty}]{\text{a.s.}} Z~.
    $$
    Here $Y$ had $\text{Beta}(1,j-1)$ distribution.
    If $i$ is on the path from the root to $j$, then $X\sim \text{Beta}(K-1,1)$, where $K$ is a random variable with $i<K\le j$. Otherwise, $X\sim \text{Beta}(1,i-1)$.
    
  \end{lemma}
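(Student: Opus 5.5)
The plan is to reduce each of the three quantities to a Pólya--Eggenberger urn that starts from a deterministic or random initial state fixed at a finite time. Each limit then exists almost surely by the classical urn convergence already used in Lemma \ref{finitechangeneighbors}. Fix $i<j$ and work with $T_n$ for $n\ge j$. Removing the path between $i$ and $j$ splits $[n]$ into three disjoint parts: the branch hanging off $i$ away from $j$, namely $(T_n,j)_{i\downarrow}$; the branch hanging off $j$ away from $i$, namely $(T_n,i)_{j\downarrow}$; and the remainder, which consists of the interior vertices of the path together with everything attached to them. These three counts sum to $n$, so the third limit $Z$ exists as soon as $X$ and $Y$ do, with $Z=1-X-Y$. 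At every step a new vertex joins one of the parts with probability equal to that part's current size divided by $n$. Grouping any one part against the other two therefore gives a two-colour Pólya urn, and its proportion converges almost surely to a Beta law with parameters given by the sizes at the time the grouping is fixed.

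The first case is when $j$ is not a descendant of $i$, that is, $i$ is not on the root-to-$j$ path. Then the branch at $i$ away from $j$ is just the ordinary descendant subtree of $i$ with respect to the root. At time $i$ it has size $1$ and the rest of the tree has size $i-1$, and its growth is independent of where later vertices attach elsewhere. This gives $|(T_n,j)_{i\downarrow}|/n\to\text{Beta}(1,i-1)$. In the paper's notation, with the roles of $X$ and $Y$ assigned as stated, this is the quantity the lemma calls $X$ in the "otherwise" case.

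For the branch at $j$ away from $i$, I would first argue that it is always the descendant subtree of $j$. This holds because $j>i$ forces $i$ not to be a descendant of $j$, so the path from $j$ to $i$ leaves $j$ through its parent. Its size is therefore $1$ at time $j$ against $j-1$ other vertices, and the limit is $\text{Beta}(1,j-1)$. This is the variable $Y$.

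The remaining case is when $i$ is an ancestor of $j$. Let $K$ be the child of $i$ on the path to $j$, so that $i<K\le j$. The branch at $i$ away from $j$ is then the complement of the descendant subtree of $K$. At time $K$ that subtree has size $1$ and its complement has size $K-1$, so the complement's proportion converges to $\text{Beta}(K-1,1)$, conditionally on $K$. To make this rigorous, I would condition on $\mathcal{F}_j$, the $\sigma$-field generated by $T_1,\dots,T_j$. The label $K$ is $\mathcal{F}_j$-measurable, and the urn for the subtree of $K$ runs from time $K$ onward independently of which vertex it happens to be, which yields the mixed Beta representation.

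I expect the main obstacle to be bookkeeping rather than analysis. The points that need care are, first, matching which of $X,Y$ the statement attaches to which subtree; second, justifying that conditioning on the random index $K$ preserves the urn dynamics; and third, noting that the initial states are correct even though $j$ arrives after $K$, since the urn for the subtree of $K$ starts at time $K$ and not at time $j$.
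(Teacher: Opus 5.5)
Your decomposition is the paper's. $(T_n,i)_{j\downarrow}$ is always the descendant subtree $(T_n,1)_{j\downarrow}$, with limit $\text{Beta}(1,j-1)$; this confirms your observation that the statement attaches the labels $X$ and $Y$ to the wrong subtrees. $(T_n,j)_{i\downarrow}$ is either $(T_n,1)_{i\downarrow}$ or the complement of $(T_n,1)_{K\downarrow}$. Almost sure convergence of all three proportions then follows from two-colour P\'olya urn convergence, as you say.

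\textbf{The gap.} The step that does not work is your rigorization of ``$\text{Beta}(K-1,1)$ conditionally on $K$''. Conditioning on $\mathcal{F}_j$ fixes the grouping at time $j$. By your own principle, the conditional limit is then $\text{Beta}(j-s,s)$ with $s=|(T_j,1)_{K\downarrow}|$, and $s\ge 2$ whenever $K<j$.

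A concrete case: take $i=1$, $j=3$, with vertex $3$ attached to vertex $2$. Then $K=2$, and the subtree of $2$ is $\{2,3\}$ at time $3$. The complement's limit is therefore $\text{Beta}(1,2)$, not $\text{Beta}(K-1,1)=\text{Beta}(1,1)$.

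The failure sits in your claim that the urn of $K$'s subtree ``runs from time $K$ onward independently of which vertex it happens to be''. For $k<j$, the event $\{K=k\}$ requires vertex $j$ to land in the subtree of $k$, and this size-biases that urn. (The event that $k$ attaches to $i$ is independent of the urn, so it causes no bias.) The indicators that vertices $m>k$ join the subtree of $k$ are exchangeable with de Finetti measure $\text{Beta}(1,k-1)$. Hence, conditionally on $\{K=k\}$, the subtree's limit is $\text{Beta}(2,k-1)$ and the complement's limit is $\text{Beta}(k-1,2)$.

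\textbf{What is actually proved.} Your argument, like the paper's one-line argument, proves a representation rather than a conditional law. For each fixed $k$, on the event $\{K=k\}$, the limit of $|(T_n,j)_{i\downarrow}|/n$ equals $1-p_k$. Here $p_k=\lim_n |(T_n,1)_{k\downarrow}|/n$ has unconditional law $\text{Beta}(1,k-1)$, so $1-p_k\sim\text{Beta}(k-1,1)$.

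The same caveat applies in your first case if $\text{Beta}(1,i-1)$ is read as a law conditional on that case. Conditionally on $i$ not being an ancestor of $j$, the limit of $|(T_n,1)_{i\downarrow}|/n$ is $\text{Beta}(1,i)$; $\text{Beta}(1,i-1)$ is only its unconditional law.

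The paper's proof glosses over this point in the same way. Lemma \ref{betweenPersistanceTwoVertices} uses only the almost sure convergence, not the exact Beta parameters. So the fix is to drop the conditioning claim and state the distributional part in this representation form, or with the corrected conditional laws above.
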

  
\begin{proof}
Suppose $i$ is not in the path from the root to $j$. Then, $(T_n,i)_{j\downarrow} = (T_n,1)_{j\downarrow}$ and similarly  $(T_n,j)_{i\downarrow} = (T_n,1)_{i\downarrow}$. By standard results for Pólya urn models, the convergence follows. 

If $i$ is in the path from the root to $j$, it still holds that $(T_n,i)_{j\downarrow} = (T_n,1)_{j\downarrow}$, so convergence follows for this subtree by standard arguments. In contrast, $(T_n,j)_{i\downarrow} = T_n \backslash(T_n,1)_{K\downarrow}$, where $K$ is the first vertex in the path from $i$ to $j$. Therefore, $|(T_n,j)_{i\downarrow}| = n- |(T_n,1)_{K\downarrow}|$. This implies that this can be modelled as a Pólya urn with $K-1$ balls of one color and one ball of the other color. The statement follows from standard convergence results of Pólya urns. 

The last convergence follows directly from the above.
\end{proof} 

Lemma \ref{betweenConvergence1} implies the following:

\begin{lemma}\label{betweenConvergence2}
  Let $i,j$ be two distinct vertices. Then, recalling that
  $\sigma_{i,j}$ denotes the path between vertices $i,j$,
    \begin{equation*}
       \sum_{\substack{v,k\in N_n(i) \\v,k \notin \sigma_{i,j} \\  v\ne k}} \frac{|(T_n,j)_{v\downarrow}||(T_n,j)_{k\downarrow}|}{n^2} \xrightarrow[\substack{n \to \infty}]{\text{a.s.}} X_i~,
    \end{equation*}
    where  $X_i$ is a random variable with a continuous distribution. 
\end{lemma}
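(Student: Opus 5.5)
Write $A_n=\sum_{v\in N_n(i)\setminus\sigma_{i,j}}|(T_n,j)_{v\downarrow}|$ and $Q_n=\sum_{v\in N_n(i)\setminus\sigma_{i,j}}|(T_n,j)_{v\downarrow}|^2$. The subtrees $(T_n,j)_{v\downarrow}$ for $v\in N_n(i)$ off the path from $i$ to $j$ are disjoint, so the sum in the statement equals
\[
\frac{A_n^2-Q_n}{n^2}.
\]
The plan is to show that $A_n/n$ and $Q_n/n^2$ both converge almost surely, and that the limit $X_i=\alpha^2-q$ has a continuous law.

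\textbf{Convergence of $A_n/n$.} Note that $A_n=n-|(T_n,i)_{k\downarrow}|-1$, where $k$ is the neighbour of $i$ on the path towards $j$. The vertex sets involved are $(T_n,i)_{k\downarrow}$, the single vertex $\{i\}$, and the union of the remaining branches at $i$. Adding the single vertex $i$ to the remaining branches gives $(T_n,k)_{i\downarrow}$. By Lemma \ref{betweenConvergence1} applied to the pair $(i,k)$, and by Pólya-urn convergence as in Lemma \ref{finitechangeneighbors}, we get $|(T_n,k)_{i\downarrow}|/n\to \alpha$ a.s. with $\alpha$ Beta-distributed. Hence $A_n/n\to\alpha$.

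\textbf{Convergence of $Q_n/n^2$.} Order the branches at $i$ off the path by birth time, and use the embedding of Section \ref{notation} together with the stick-breaking description used in the proof of Theorem \ref{JordanUpperBoundProbability}. The normalized branch sizes then converge a.s. to $\alpha'\,(U_1,(1-U_1)U_2,\dots)$, possibly with one branch (the one containing the root when $i$ is not on the path from $1$ to $j$) handled separately as a single Pólya component. Each term converges a.s. To interchange limit and sum, use the domination
\[
\sum_{v\ge m}|(T_n,j)_{v\downarrow}|^2/n^2\le \Big(\sum_{v\ge m}|(T_n,j)_{v\downarrow}|/n\Big)^2.
\]
The right-hand side tends a.s., as $n\to\infty$, to the limiting mass of the branches born after the $m$-th, which goes to $0$ as $m\to\infty$. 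Thus $Q_n/n^2\to q$ a.s.

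\textbf{Continuity of the limit.} This is the main obstacle: we must show $\PROB(\alpha^2-q=c)=0$ for every $c$. First I would handle the degenerate case where $i$ has no branch off the path. Since $i$ eventually acquires new neighbours a.s., $\alpha>0$ and at least one $U$ appears. Then I would condition on all variables except $U_1$. The map $U_1\mapsto \alpha^2-q$ is a nonconstant polynomial (quadratic) in $U_1$. Its coefficient of $U_1^2$ is $-\alpha'^2(1+\sum_{k\ge2}\prod(1-U)^2\cdots)$, which is nonzero a.s. Hence each level set $\{\alpha^2-q=c\}$ contains at most two values of $U_1$, which has probability zero under the uniform law. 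Fubini then gives an atomless law.

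If a fully continuous (absolutely continuous) distribution is required, the same conditioning argument applies: a nondegenerate quadratic of an absolutely continuous variable has a density off a null set. Integrating over the other variables then shows absolute continuity.
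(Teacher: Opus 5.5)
Your convergence argument is sound and supplies what the paper omits. The paper only asserts that Lemma~\ref{betweenConvergence1} implies the statement, but that lemma controls only the two subtrees at the ends of the path. It says nothing about the unboundedly many branches at $i$ or about continuity of the limit. Your identity $(A_n^2-Q_n)/n^2$, the single P\'olya urn for $A_n$, and the tail bound handle the branches correctly.

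The continuity step, however, does not go through as written. You condition on everything except $U_1$, treat $U_1$ as uniform, and treat $U_1\mapsto\alpha^2-q$ as a fixed quadratic. When $j>i$, which neighbour of $i$ is excluded (the parent, or the $k_c$-th child of $i$) is decided by $T_j$, and that is correlated with the sticks. Take $i=1$, $j=3$. If $3$ attaches to $2$, the first child is the excluded branch and conditionally $U_1\sim\mathrm{Beta}(2,1)$. Otherwise the first child is off the path and $U_1\sim\mathrm{Beta}(1,2)$. So, given the other variables, $U_1$ is not uniform and does not enter through a fixed polynomial.

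Your leading coefficient $-\alpha'^2(1+\cdots)$ is also correct only when the parent is excluded. When a child of $i$ is excluded, the remaining children are in general not of the form $\alpha'(U_1,(1-U_1)U_2,\dots)$. In that case the parent branch also contributes to both $\alpha$ and $q$. That branch is off the path exactly when $i\neq 1$ lies on the path from $1$ to $j$; your parenthetical has this reversed.

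To repair this, first condition on $T_m$ with $m=\max(i,j)$, which takes finitely many values. Given $T_m$ the path is fixed and the limits are independent:
\begin{itemize}
\item $\beta=\lim_n|(T_n,1)_{i\downarrow}|/n$ has a Beta law;
\item the children of $i$ present at time $m$, together with the block formed by $i$ and its later children, receive a Dirichlet vector with positive parameters, which you can write as independent Beta sticks;
\item later children receive uniform sticks.
\end{itemize}
Let $W_1$ be the first stick and $P'_k$ ($k\ge 2$) the remaining children's fractions renormalised by $1-W_1$; these are independent of $W_1$. Set $S'=\sum_{k\ge 2}(P'_k)^2<1$. Then $X_i$ is a quadratic in $W_1$ (take $\beta=1$ if $i=1$) with leading coefficient
\begin{itemize}
\item $-\beta^2(1+S')$ if the parent is excluded;
\item $\beta^2(1-S')$ if the first child is excluded;
\item $\beta^2(2(P'_{k_c})^2-1-S')\le\beta^2((P'_{k_c})^2-1)<0$ if the $k_c$-th child with $k_c\ge 2$ is excluded.
\end{itemize}
These are nonzero almost surely and $W_1$ has a density. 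Your level-set and Fubini argument therefore applies conditionally on each value of $T_m$. Summing over these values gives an atomless, indeed absolutely continuous, law.
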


\begin{lemma}\label{betweenPersistanceTwoVertices}
    For any two distinct vertices, there exists a random time $N$ with $\PROB(N<\infty)=1$  such that for all $n\ge N$ the relative ordering of their betweenness centrality does not change. 
\end{lemma}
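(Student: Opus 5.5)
The plan is to reduce the comparison of the betweenness centralities of two fixed vertices $i<j$ to a comparison of almost surely convergent normalized quantities, and to show that the limits are almost surely distinct. Then the ordering at time $n$ agrees with the ordering of the limits for all large $n$. It is convenient to work with the original form $\mathcal{B}_{T_n}(v)$, the number of pairs separated by $v$. For any vertex $v$ with neighbours $u_1,\dots,u_d$ and branch sizes $s_k=|(T_n,v)_{u_k\downarrow}|$, we have $\mathcal{B}_{T_n}(v)=\sum_{k\neq l} s_k s_l = (n-1)^2-\sum_k s_k^2$. So comparing $\mathcal{B}$ is the same as comparing $\mathcal{B}'$, and after dividing by $n^2$ we need the limit of $\mathcal{B}'_{T_n}(v)/n^2$.

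For $v=i$, split the branches at $i$ into two groups. The first is the branch containing $j$, whose size is $n-|(T_n,j)_{i\downarrow}|$ up to the branch structure. The second is the branches not on $\sigma_{i,j}$, whose total size is $|(T_n,j)_{i\downarrow}|-1$. The cross terms among the latter are exactly the sum in Lemma \ref{betweenConvergence2}. Writing $\sum_k s_k^2=(\sum_k s_k)^2-\sum_{k\ne l}s_ks_l$ for the off-path branches, Lemma \ref{betweenConvergence1} and Lemma \ref{betweenConvergence2} give
\[
\frac{\mathcal{B}'_{T_n}(i)}{n^2}\xrightarrow{\text{a.s.}} (1-Y)^2+Y^2-X_i ,
\]
with $Y$ as in Lemma \ref{betweenConvergence1}. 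Symmetrically,
\[
\frac{\mathcal{B}'_{T_n}(j)}{n^2}\xrightarrow{\text{a.s.}} (1-X)^2+X^2-X_j .
\]
Both limits exist almost surely, so if the two limits $L_i,L_j$ differ, then for all large $n$ the sign of $\mathcal{B}'_{T_n}(i)-\mathcal{B}'_{T_n}(j)$ equals the sign of $L_i-L_j$. This gives the random time $N$.

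The main obstacle is showing $\PROB(L_i=L_j)=0$. I would condition on the $\sigma$-field generated by the tree up to some time and by all limiting proportions except one well-chosen independent piece. For instance, among the subtrees hanging off $i$ but away from $j$, take the first child $c$ of $i$ not on $\sigma_{i,j}$ (if none exists, the off-path part of $i$ is trivial and one uses the symmetric piece at $j$, or the split along the path). Its limiting fraction is, by the stick-breaking representation used in Lemma \ref{maxSubtreeDickman}, a uniform (or Beta) variable times the remaining mass. This uniform variable is independent of the configuration of the branches at $j$ and of the internal splits of the remaining branches. As a function of this single uniform variable $U$, with everything else fixed, $L_i-L_j$ is a nonconstant polynomial of degree two in $U$. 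Indeed, the mass leaving $c$ is redistributed among the other branches at $i$, which changes $\sum s_k^2$ quadratically, while $L_j$ depends on $U$ at most linearly through $X$ and $Z$. A nonconstant polynomial has finitely many zeros, so by absolute continuity the conditional probability of equality is $0$. Integrating gives $\PROB(L_i=L_j)=0$. The delicate part is the case analysis ensuring the polynomial is genuinely nonconstant, in particular when $i$ and $j$ are neighbours or $i$ is a leaf in the limit structure. In that case I would instead randomize the Beta-distributed split $Y$ from Lemma \ref{betweenConvergence1}, which enters $L_i$ through $(1-Y)^2+Y^2$ and cannot cancel identically.
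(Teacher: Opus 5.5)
Your proposal is correct and follows the same route as the paper. Both arguments compare the two vertices through quantities that, divided by $n^2$, converge almost surely by Lemmas~\ref{betweenConvergence1} and~\ref{betweenConvergence2}. The real difference is that you notice convergence alone does not freeze the order unless the two limits differ, and you set out to prove $\PROB(L_i=L_j)=0$. The paper's proof goes from ``each term converges'' straight to the conclusion. The continuity of $X_i$ asserted in Lemma~\ref{betweenConvergence2} gestures at this, but a continuous marginal does not exclude ties between dependent limits. So your version is the more complete one. It helps to write the difference explicitly as $L_i-L_j=2Z(X-Y)+X_j-X_i$.

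Two points in your tie-exclusion step need tightening.

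First, the variable you randomize should be a split \emph{inside} the off-path part $(T_n,j)_{i\downarrow}$. For instance, condition on $T_{n_0}$ with $n_0\ge j$. Let $G$ be the total limiting mass of the children of $i$ born after $n_0$, and let $U$ be the share of $G$ captured by the first such child. Then $X,Y,Z$ and everything at $j$ are independent of $U$, and only $X_i$ moves. So $L_i-L_j$ is a quadratic in $U$ with leading coefficient $G^2(1+S)>0$, where $S\in(0,1)$ is the sum of squares of the remaining GEM weights. Suppose instead $U$ is the stick-breaking variable of $c$ among \emph{all} children of $i$, and the on-path child of $i$ arrives after $c$. Then $X$ carries a factor $1-U$, $L_j$ is quadratic in $U$, and $U$ is not independent of the branches at $j$. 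So your ``at most linearly'' claim is false in that parametrization.

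Second, no case analysis is needed. Vertex $i$ almost surely acquires infinitely many children, so such a $U$ exists even when $i,j$ are neighbours. Note that $\sigma_{i,j}$ excludes its endpoints, so ``not on $\sigma_{i,j}$'' does not by itself exclude $j$ when $j$ is a child of $i$. Avoiding the fallback matters, because it is wrong as stated. For neighbours $Z=0$ and $X=1-Y$, so the $(1-Y)^2+Y^2$ contributions cancel exactly and $L_i-L_j=X_j-X_i$. Randomizing $Y$ could only help through the scalings of $X_i$ and $X_j$, not for the reason you give.
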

\begin{proof}
For any two vertices $i,j\le n$, let $A_n=n-|(T_n,j)_{i\downarrow}|-|(T_n,i)_{j\downarrow}|$. Then $\mathcal{B}_{T_n}(i) \le \mathcal{B}_{T_n}(j)$ if and only if
\begin{align*}
    &|(T_n,j)_{i\downarrow}|A_n + \sum_{\substack{v,k\in N_n(i) \\v,k \notin \sigma_{i,j} \\  v\ne k}}|(T_n,j)_{v\downarrow}||(T_n,j)_{k\downarrow}| \\
    &\le |(T_n,i)_{j\downarrow}|A_n + \sum_{\substack{v,k\in N_n(j) \\v,k \notin \sigma_{i,j} \\  v\ne k}}|(T_n,i)_{v\downarrow}||(T_n,i)_{k\downarrow}|~.
\end{align*}
Lemma \ref{betweenConvergence1} and Lemma \ref{betweenConvergence2} imply the almost sure convergence of each term in the inequality as $n \to\infty$. Therefore, there exists an almost surely finite random time after which the relative ordering of the two does not change.
\end{proof}

The key technical ingredient in the proof of persistence of the rank of the root and the center is the following extension of Lemma \ref{galashinLemma}, which may be of independent interest. 

\begin{lemma}\label{diagonalRW}
    Let $(X_{a,t},Y_{a,t})$ be a two-color Pólya-Eggenberger urn evolving over time $t\ge0$, with $(X_{a,0},Y_{a,0})=(a,1)$, where $a\ge1$. Let $x\in(0,1)$ be fixed, and define the event $E_{a,x}= \left\{\frac{X_{a,t}}{a+1+t}<x \text{ for at least one }t \right\}$. Then
    $$\sum_{a=1}^{\infty} \PROB(E_{a,x})<\infty ~.$$
\end{lemma}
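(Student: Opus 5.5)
Since the Pólya urn started at $(a,1)$ is exchangeable, I would use the de Finetti representation. Conditionally on $\beta\sim\mathrm{Beta}(a,1)$, the successive draws are i.i.d.\ Bernoulli$(\beta)$. Hence $X_{a,t}=a+S_t$, where $S_t\sim\mathrm{Bin}(t,\beta)$ given $\beta$. The density of $\beta$ is $a u^{a-1}$, so $\PROB(\beta\le 1-s)\le e^{-as}$. This means $\beta$ is within $O(1/a)$ of $1$, which is much larger than $x$ once $a$ is large. The plan is to split according to whether $\beta$ is bounded away from $x$.

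Fix $y=(1+x)/2\in(x,1)$ and write
\[
\PROB(E_{a,x})\le \PROB(\beta<y)+\PROB(E_{a,x},\ \beta\ge y).
\]
The first term is $y^{a}$, which is summable in $a$.

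For the second term, condition on $\beta=b\ge y$. The event requires some $t\ge0$ with $a+S_t<x(a+1+t)$. For $a$ large, $a>x(a+1)$, so $t=0$ is excluded automatically. Since $S_t\ge$ a Bin$(t,y)$ variable stochastically, the event implies
\[
S_t-yt< -\bigl((y-x)t+(1-x)a-x\bigr).
\]
Hoeffding's inequality bounds each $t$ by $\exp\bigl(-2((y-x)t+(1-x)a-x)^2/t\bigr)$. Using $(u+v)^2\ge 4uv$, this is at most $\exp\bigl(-c_1 t-c_2 a\bigr)$ with constants depending only on $x$. Summing over $t\ge1$ gives $O(e^{-c_2 a})$, again summable. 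The constraint that $a$ be large affects only finitely many $a$, each of which contributes a probability at most $1$. This proves $\sum_a\PROB(E_{a,x})<\infty$.

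The main point to check is the uniform-in-$t$ control. Hoeffding with the linear drift term $(y-x)t$ makes the sum over $t$ geometric, and the offset $(1-x)a$ supplies the decay in $a$. This union bound over $t$ replaces the reflection-type argument of Lemma \ref{galashinLemma}. The only other care needed is justifying the de Finetti mixture representation of the Pólya urn, which is standard (see \cite{mahmoud}).
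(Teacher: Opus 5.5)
Your proof is correct, and its skeleton matches the paper's. Both arguments split on whether the limiting proportion $\beta$ (the paper's $B_a$) lies below $y=(1+x)/2$, bound that piece by $y^a$, and handle the remaining event by a union bound over $t$ with an exponential tail estimate.

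The difference lies in what you condition on for that estimate:
\begin{itemize}
\item The paper first reduces, via a first-passage observation, to the events $X_{a,t}=\lfloor x(a+t+1)\rfloor$. It then conditions on the urn composition at time $t$ and bounds the upper tail of the posterior law $\mathrm{Beta}(\alpha,\beta)$ of the limit. This uses the Gamma representation and an optimized Chernoff bound, giving $\varphi(x)^{a+t+1}$ with $\varphi(x)=\frac12(1+1/x)^x$, over the range $t\ge \frac{1-x}{x}a-1$.
\item You condition on the limit itself, so the path becomes a binomial walk with success probability at least $y$, and you apply Hoeffding. The decay in $a$ comes directly from the initial offset $(1-x)a$. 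You need neither the exact-level reduction nor the Chernoff optimization.
\end{itemize}
The two routes are Bayes-dual: the posterior Beta fact the paper uses is the same de Finetti structure you invoke. So neither is more general, but yours is somewhat more transparent.

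One algebra step needs fixing. Set $u=(y-x)t$, $v=(1-x)a-x$, and $s_t=u+v$. The inequality $(u+v)^2\ge 4uv$ alone gives $2s_t^2/t\ge 8(y-x)v$. That bound has no decay in $t$, so summing it over $t$ would diverge. Use instead $(u+v)^2\ge u^2+2uv$, which is valid since $u,v\ge 0$ once $a>x/(1-x)$. This yields
\[
\frac{2s_t^2}{t}\ \ge\ 2(y-x)^2\,t+4(y-x)\bigl((1-x)a-x\bigr).
\]
With $y-x=(1-x)/2$, the sum over $t\ge1$ is then $O\bigl(e^{-2(1-x)^2a}\bigr)$, which is summable in $a$ as you claim.
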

\begin{proof}
    Recall that by standard convergence properties of Pólya urns (see, e.g., \cite{janson2004functional} \cite{mahmoud}),
    $$\frac{X_{a,t}}{a+1+t}\xrightarrow[t\to\infty]{a.s.} \text B_a~,$$ 
    where $B_a$ is a Beta$(a,1)$ random variable. Then
    \begin{eqnarray*}
        \PROB(E_{a,x}) &\le & \PROB \left(B_a\le \frac{1+x}{2} \right)+ \PROB \left(E_{a,x},B_a >\frac{1+x}{2}\right)
        \\ &\le & \PROB \left(B_a\le \frac{1+x}{2} \right) + \sum_{t=1}^{\infty} \PROB \left(X_{a,t}= \lfloor  x(a+t+1)\rfloor,B_a >\frac{1+x}{2}\right) ~.
        \end{eqnarray*}
    Note that $\PROB(X_{a,t}= \lfloor  x(a+t+1)\rfloor)=0$ if $a>\lfloor  x(a+t+1)\rfloor$. Thus,
    \begin{align*}
        \PROB \left(X_{a,t}= \lfloor  x(a+t+1)\rfloor, B_a >\frac{1+x}{2}\right) \le \PROB \left(B_a >\frac{1+x}{2} \middle| X_{a,t}= \lfloor  x(a+t+1)\rfloor \right)
        \\ = \PROB \left( \text{Beta}(\lfloor  x(a+t+1)\rfloor,a+t+1-\lfloor  x(a+t+1)\rfloor) > \frac{1+x}{2}\right)~.
    \end{align*}
    We have used the fact that, conditionally on $ X_{a,t}= \lfloor  x(a+t+1)\rfloor$, the limiting proportion of $X$ balls is again beta distributed with parameters given by the composition at time $t$, which are $\lfloor  x(a+t+1)\rfloor$ $X$ balls and $a+t+1-\lfloor  x(a+t+1)\rfloor$ $Y$ balls. Setting $\alpha =\lfloor  x(a+t+1)\rfloor$ and $\beta =a+t+1-\lfloor  x(a+t+1)\rfloor$, we let $G_{\alpha}$ and $G_{\beta}$ be two independent gamma random variables with parameters $(\alpha,1)$ and $(\beta,1)$. Then,
    \begin{eqnarray*}
        \PROB \left(\text{Beta}(\alpha,\beta)> \frac{1+x}{2}\right) &\le & \PROB\left(\frac{G_{\alpha}}{G_{\alpha}+G_{\beta}} > \frac{1+x}{2} \right) 
        \\&= & \PROB(G_{\alpha}(1-x)-G_{\beta}(1+x)>0) 
        \\ &\le & \inf_{\lambda>0} \EXP \left[e^{\lambda(G_{\alpha}(1-x)-G_{\beta}(1+x))}\right]
        \\ &= & \inf_{\lambda>0} \left( \frac{1}{1-\lambda(1-x)}\right)^\alpha\left( \frac{1}{1+\lambda(1+x)}\right)^\beta
        \\ &\le & \left\{\inf_{0<\lambda<\frac{1}{1-x}} \left( \frac{1}{1-\lambda(1-x)}\right)^x\left( \frac{1}{1+\lambda(1+x)}\right)^{1-x}\right\}^{a+t+1}
        \\&= & \left(\frac{1}{2}\left(1+\frac{1}{x}\right)^x \right)^{a+t+1} =:\varphi(x)^{a+t+1} ~,
    \end{eqnarray*}
    where the last equality follows by computing the infimum, which is achieved at $\lambda= \frac{1}{1+x}$. Since $\varphi(1)=1$ and $\varphi'(x)>0$ for $x\in(0,1)$, we have $\varphi(x)<1$ for $x\in(0,1)$.  Therefore, 
    \begin{align*}
        \sum_{a=1}^{\infty}\PROB(E_{a,x}) &\le \sum_{a=1}^{\infty} \PROB \left(B_a\le\frac{1+x}{2} \right) + \sum_{a=1}^{\infty}\sum_{t\ge\frac{1-x}{x}a-1} (\varphi(x))^{a+t+1}
        \\ & \le \sum_{a=1}^{\infty} \left(\frac{1+x}{2}\right)^a + \sum_{a=1}^{\infty} \frac{x}{1-x}(t+1)(\varphi(x))^{t+1}
        \\ &< \infty ~.
    \end{align*}
\end{proof}

\begin{theorem}\label{BetweennessPersistenceCentroid}
 The index  $I_n$ of the betweenness center is persistent.
\end{theorem}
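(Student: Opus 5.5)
Our plan is to reduce persistence of the betweenness center to two facts. First, only finitely many vertices ever become the betweenness center. Second, any fixed pair of vertices changes relative order only finitely often, which is Lemma \ref{betweenPersistanceTwoVertices}. Given both, once the finitely many candidate vertices have all been born and their pairwise orderings have stabilized, the center is constant from some random time on.

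For the first fact we work with the representation $\mathcal{B}'_{T_n}(v)=\sum_{u\in N_n(v)}|(T_n,v)_{u\downarrow}|^2$, where smaller means more central. For any vertex $v\neq 1$ we have $\mathcal{B}'_{T_n}(v)\ge (n-|(T_n,1)_{v\downarrow}|)^2$. Meanwhile $\mathcal{B}'_{T_n}(1)\le n\,D_n^*$, where $D_n^*=\max_{u\in N_n(1)}|(T_n,1)_{u\downarrow}|$. So a vertex $v$ can be at least as central as the root at time $t$ only if
\[
\left(1-\frac{|(T_t,1)_{v\downarrow}|}{t}\right)^2\le \frac{D^*_t}{t}.
\]
By Lemma \ref{maxSubtreeDickman}, $D_t^*/t\to D^\dagger<1$ almost surely. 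Hence there is a random $x\in(0,1)$ and a random finite time $M$ such that for $t\ge M$ this requires $|(T_t,1)_{v\downarrow}|/t\ge x$.

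This condition has the form of the event in Lemma \ref{diagonalRW}, taken with the roles reversed. When vertex $v$ is born at time $v$, the pair $\bigl(v-|(T_t,1)_{v\downarrow}|,\,|(T_t,1)_{v\downarrow}|\bigr)$ evolves as a Pólya urn started from $(v-1,1)$. The complement proportion $1-|(T_t,1)_{v\downarrow}|/t$ dropping below $1-x$ is then exactly the event $E_{v-1,1-x}$. We argue on the events $\{x\ge x_0\}$ for a deterministic $x_0\in(0,1)$, letting $x_0\downarrow 0$ along a sequence; this handles the randomness of $x$, which must be removed to apply the lemma.

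Lemma \ref{diagonalRW} gives $\sum_v\PROB(E_{v-1,1-x_0})<\infty$. By Borel--Cantelli, almost surely only finitely many $v$ ever satisfy the necessary condition at a time $t\ge M$.

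Next we handle the vertices born before $M$ and the times before $M$. Only finitely many vertices exist before time $M$, and every vertex born after $M$ is covered by the argument above. It follows that the set of vertices that are ever at least as central as the root after time $M$ is finite almost surely. Since the root is a competitor, every center at a time $t\ge M$ lies in this set, together with the root itself.

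We expect the main obstacle to be making this Borel--Cantelli step rigorous despite the random threshold $x$ and random time $M$. The intersection-with-$\{x\ge x_0, M\le m\}$ device used in the proof of Theorem \ref{persistencerankrootJordan} should handle it. We must also check that the urn for $v$ starts in the state assumed by Lemma \ref{diagonalRW}; we note that Lemma \ref{diagonalRW} is stated for every start $a$.

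Finally, Lemma \ref{betweenPersistanceTwoVertices} applies to each of the finitely many pairs in this candidate set, giving an almost surely finite time after which the minimiser of $\mathcal{B}'$ among the candidates is fixed. Ties are eventually excluded because the limits in Lemmas \ref{betweenConvergence1} and \ref{betweenConvergence2} have continuous distributions. This proves that $I_n$ is persistent.
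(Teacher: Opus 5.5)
Your proof is correct and follows the paper's argument. You use a necessary lower bound on $|(T_n,1)_{v\downarrow}|/n$ for being the center, then Lemma \ref{diagonalRW} with Borel--Cantelli to get finitely many candidates, then Lemma \ref{betweenPersistanceTwoVertices} for the pairwise ordering. The one difference is that the paper uses the deterministic threshold $n/4$ (Durant--Wagner; it also follows from comparing with the centroid, whose $\mathcal{B}'$ is below $n^2/2$), so it applies Lemma \ref{diagonalRW} directly with $x=3/4$ and needs neither your random threshold $x$ and time $M$ from comparing with the root nor the $x_0\downarrow 0$ device.
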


\begin{proof}
As observed by Durant and Wagner \cite{distributionBetweenness}, for a vertex $v$ to be the betweenness center, it is necessary that $|(T_n,1)_{v\downarrow}| \ge n/4$. Hence, by Lemma \ref{diagonalRW} with $x = 3/4$, almost surely only finitely many vertices have a subtree of size at least $n/4$ at some time $n$, which, together with Theorem \ref{betweenPersistanceTwoVertices}, proves persistence of the center.
\end{proof}

\begin{theorem}\label{BetweennessPersistenceRoot}
 The betweenness rank $R_n$ of the root is persistent.
\end{theorem}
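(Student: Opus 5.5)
Our plan follows the proof of Theorem \ref{persistencerankrootJordan}. We show that almost surely only finitely many vertices ever become more central than the root. Combined with Lemma \ref{betweenPersistanceTwoVertices}, this gives persistence. Indeed, once the set $\mathcal{V}$ of vertices that are ever more central than the root is a.s.\ finite, each pair $\{1,v\}$ with $v\in\mathcal{V}$ changes relative order only finitely often. After the maximum of these finitely many a.s.\ finite times, the set of vertices more central than the root is frozen. Its cardinality is then constant, so $R_n=R$ eventually.

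To control $\mathcal{V}$, we first reduce "more central than the root" to a subtree-size condition, as in Theorem \ref{probabilityBetweenness}. We use the representation $\mathcal{B}'_{T_n}$, where smaller values mean more central. For every $v\neq 1$,
\[
\mathcal{B}'_{T_n}(v)\ge \bigl(n-|(T_n,1)_{v\downarrow}|\bigr)^2 ,
\]
while $\mathcal{B}'_{T_n}(1)\le n D_n^*$, with $D_n^*=\max_{u\in N_n(1)}|(T_n,1)_{u\downarrow}|$. Hence $v$ can be more central than the root at time $n$ only if
\[
\Bigl(1-\tfrac{|(T_n,1)_{v\downarrow}|}{n}\Bigr)^2<\tfrac{D_n^*}{n}\le 1 .
\]
This alone is not a bound. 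We need $D_n^*/n$ to stay below $1-\delta$ for some random $\delta>0$ and all large $n$. This follows from the lemma preceding Theorem \ref{persistencerankrootJordan}, which gives $n-D_n^*\ge \xi' n$ for $n\ge M$. For $n\ge M$, any such $v$ must then satisfy $|(T_n,1)_{v\downarrow}|\ge x n$ with $x=1-\sqrt{1-\xi'}>0$.

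Now fix a deterministic $x\in(0,1)$ and work on the event $\{\xi'\ge 1-(1-x)^2\}\cap\{M\le m\}$. Vertex $a+1$ is born at time $a+1$ with subtree size $1$. The pair (complement size, subtree size) then evolves as a Pólya urn started at $(a,1)$. The event that its subtree ever reaches a fraction $x$ of the tree is exactly $E_{a,1-x}$ from Lemma \ref{diagonalRW}. Since $\sum_a\PROB(E_{a,1-x})<\infty$, Borel--Cantelli shows that only finitely many vertices ever have $|(T_n,1)_{v\downarrow}|\ge xn$. The finitely many times $n<m$ involve only the finitely many vertices present by time $m$. Letting $x\downarrow 0$ along a countable sequence, and $m\to\infty$, the probability of the good event tends to $1$. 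Hence $\mathcal{V}$ is a.s.\ finite.

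The main obstacle is making the uniform bound on $D_n^*/n$ rigorous, that is, the threshold $x$ must not depend on $n$. I would use the a.s.\ convergence $D_n^*/n\to D^\dagger<1$ from Lemma \ref{maxSubtreeDickman}, which makes $\sup_{n\ge M} D_n^*/n<1$ for some finite $M$. A second point needs care: Lemma \ref{diagonalRW} is stated with an initial state $(a,1)$ and normalization $a+1+t$. I would check that the attachment dynamics of a newly born vertex's subtree match this exactly, which they do in a \textsc{urrt}. An alternative is to use Lemma \ref{galashinLemma} as in the Jordan case.
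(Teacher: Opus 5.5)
Your proof is correct, but it follows a different and more direct route than the paper's.

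\textbf{The paper's route.} The paper splits according to the set $\mathcal{S}$ of vertices that are ever at least as Jordan-central as the root, i.e.\ $B_{n,v}\ge A_{n,v}$ at some time. It bounds the probability that $v$ ever beats the root by $\PROB(v\in\mathcal{S})$ plus a probability conditioned on $\{v\notin\mathcal{S}\}$. The sum $\sum_v\PROB(v\in\mathcal{S})$ is controlled through Theorem \ref{persistencerankrootJordan}. On $\{v\notin\mathcal{S}\}$ the paper rearranges the betweenness comparison into $B_{n,v}\ge A_{n,v}\bigl(1-\max_{j\in N_n(1)}N_{n,j}/n\bigr)$ and only then invokes Lemma \ref{diagonalRW}.

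\textbf{Your route.} You use only the crude comparison from Theorem \ref{probabilityBetweenness}:
\[
(n-|(T_n,1)_{v\downarrow}|)^2\le\mathcal{B}'_{T_n}(v)\le\mathcal{B}'_{T_n}(1)\le (n-1)D_n^*.
\]
This reduces everything to the subtree of $v$ ever occupying a positive fraction of the tree. That is the same mechanism as the paper's proof of Theorem \ref{BetweennessPersistenceCentroid}, except that the threshold is random. You handle the randomness correctly: $\sup_{n\ge 2}D_n^*/n<1$ a.s. by Lemma \ref{maxSubtreeDickman}, and you take a countable union over deterministic levels.

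\textbf{What your route buys.}
\begin{itemize}
\item It needs no input from the Jordan case.
\item It applies Lemma \ref{diagonalRW} to the unconditioned Pólya urn of each newborn vertex. In the paper, the urn is conditioned on $\{v\notin\mathcal{S}\}$, which depends on the whole future trajectory, and the threshold is random. Both features would need extra justification before Lemma \ref{diagonalRW} applies.
\item It avoids needing $\sum_v\PROB(v\in\mathcal{S})<\infty$. That summability is strictly stronger than the almost-sure finiteness actually established in the proof of Theorem \ref{persistencerankrootJordan}.
\end{itemize}
The paper's finer inequality retains more information about how $v$ compares to the root. For persistence, however, only the finiteness of the set of challengers matters, and your bound suffices.

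\textbf{Small point.} Your identification of the event with $E_{a,1-x}$ is exact only because your inequality is strict, which comes from $(n-1)D_n^*<nD_n^*$. Without strictness you would replace $x$ by $x/2$.
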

\begin{proof}
For vertex $v$, define $A_{n,v}=(T_n,v)_{1\downarrow}$, $B_{n,v}=(T_n,1)_{v\downarrow}$, 
and let $N_n (v)$ denote the set of neighbors of $v$.
For $j\in N_n(v)$ not in the path from $1$ to $v$, let $M_{n,j} = |(T_n,v)_{j\downarrow}|$, and similarly for $i\in N_n(1)$ not in the path from $1$ to $v$, let $N_{n,i} = |(T_n,1)_{i\downarrow}|$. 
Let $\mathcal{J}_{n,v}=N_n(v)\backslash p(v)$, where $p(v)$ is the parent of vertex $v$, and $\mathcal{I}_{n,v}=N_n(1)\backslash u_1^v$, where $u_1^v$ denotes the first vertex in the path from the root to vertex $v$. 
Then, vertex $v$ is more central than the root if
\begin{align}
    \sum_{i\in\mathcal{I}_{n,v}} N_{n,j}^2 + (n-A_{n,v})^2 \ge \sum_{j\in\mathcal{J}_{n,v}} M_{n,j}^2 + (n-B_{n,v})^2 .
\end{align}
The latter event implies that
\begin{align}
(2n-A_{n,v}-B_{n,v})(B_{n,v}-A_{n,v}) \ge \sum_{j\in \mathcal{J}_{n,v}} M_{n,j}^2 - \sum_{i\in \mathcal{I}_n} N_{n,j}^2~. \label{ineqBetweenPersistence}
\end{align}

Let $\mathcal{S}$ be the set of vertices $v$ that at some time $n$ satisfy the inequality $B_{n,v}\ge A_{n,v}$:
$$\mathcal{S}= \{v\in\cup_{j=1}^{\infty}V_j: \cup_n [B_{n,v}\ge A_{n,v}]\}.$$
Then,
\begin{align}
    \PROB &(\cup_n [\mathcal{B}_n(v) \le \mathcal{B}_n(1)]) \nonumber\\
    & = \PROB (\cup_n [ \mathcal{B}_n(v) \le \mathcal{B}_n(1)]|v \in {\mathcal S})\PROB(v \in {\mathcal S}) \nonumber\\
    & \qquad + \PROB \left(\cup_n [ \mathcal{B}_n(v) \le \mathcal{B}_n(1)]|v \not\in {\mathcal S}\right)\PROB\left(v \not\in {\mathcal S}\right)
   \nonumber\\
   & \le  \PROB(v \in {\mathcal S}) + \PROB \left(\cup_n [\mathcal{B}_n(v) \le \mathcal{B}_n(1)]|v \not\in {\mathcal S}\right)~. \label{probabilityEv}
\end{align}
On the event $v \not\in {\mathcal S}$, inequality (\ref{ineqBetweenPersistence}) yields 
\begin{align}
    \left[ \sum_{j\in \mathcal{I}_n}N_{n,j}^2 \ge (A_{n,v}-B_{n,v})n \right] 
\end{align}
and this in turn implies the event
\begin{align}
    \left[ B_{n,v}n \ge \sum_{j\in \mathcal{I}_{n,v}}nN_{n,j}-\sum_{j\in \mathcal{I}_{n,v}}N_{n,j}^2 \right]
\end{align}
which, in turn, implies
\begin{align}
    \left[ B_{n,v} \ge \sum_{j\in \mathcal{I}_{n,v}} N_{n,j}\left( 1-\frac{N_{n,j}}{n}\right) \ge A_{n,v}\left( 1-\frac{\max_{j\in \mathcal{I}_{n,v}} N_{n,j}}{n}\right)
    \right]~. \label{inequalitysubtreeBetweenPersistent} 
\end{align}
Hence, the second term in (\ref{probabilityEv}) can be bounded as
\begin{align}
      \PROB &\left(\cup_n [\mathcal{B}_n(v) 
      \le \mathcal{B}_n(1)]|v \not\in {\mathcal S}\right) \nonumber \\
      &  \le  \PROB \left(\bigcup_n \left[
      B_{n,v} \ge A_{n,v}\left( 1-\frac{\max_{j\in \mathcal{I}_{n,v}} N_{n,j}}{n}\right)
      \right]
      \Biggm| 
      v \not\in {\mathcal S} \right) \nonumber \\  
      & \le \PROB \left(
      \bigcup_n
      \left[ 
      B_{n,v} \ge A_{n,v}\left( 1-\frac{\max_{j\in N_n(1)} N_{n,j}}{n}\right) 
      \right] 
      \Biggm| 
      v \not\in {\mathcal S} \right)~. \label{probabilityBoundAnBn}
\end{align}
By Lemma \ref{maxSubtreeDickman}, the term $\left( 1-\frac{\max_{j\in N_n(1)} N_{n,j}}{n}\right)$ converges almost surely to a limit. Thus, the probability in (\ref{probabilityBoundAnBn}), for large enough $n,v$ can be cast into the same setting as that of Lemma \ref{diagonalRW}, implying that the sum of the probabilities in (\ref{probabilityBoundAnBn}) among all vertices $v\in\mathbb{N}$ is finite. Therefore, 
\begin{align*} 
\sum_{v=2}^{\infty} &\PROB (\cup_n [\mathcal{B}_n(v) \le \mathcal{B}_n(1) ]   ) \\
&\le \sum_{v=2}^{\infty} \left(\PROB(v \in {\mathcal S}) + \PROB \left( \bigcup_n \left[ B_{n,v} \ge A_{n,v}\left( 1-\frac{\max_{j\in N_n(1)} N_{n,j}}{n}\right) 
\right]
\Biggm| 
v \not\in {\mathcal S} \right) \right) \\
&< \infty ~,
\end{align*}
where the convergence of the sum $\sum_{v=2}^{\infty} \PROB(v \in {\mathcal S})$ follows from Theorem \ref{persistencerankrootJordan}. Therefore, almost surely, only finitely many vertices become more central than the root. Together with Lemma \ref{betweenPersistanceTwoVertices}, this implies persistence of the rank of the root.
\end{proof}

\section{Degree centrality}
\label{sec:degree}

Degree centrality in a random recursive tree has been extensively analyzed by Eslava \cite{eslavadegree}, Eslava, López and Ortiz \cite{eslavanumverticesdegree}, and Banerjee and Bhamidi \cite{BanerjeeBhamidiHubs}. The results in this section follow from their results.

In particular, Eslava et al. \cite{eslavanumverticesdegree} showed that, for every $\varepsilon >0$, the degree-rank $R_n$ of the root is greater than $n^{1-\log(2)-\varepsilon}$ with probability tending to $1$.
In particular, $\PROB\{R_n=1\}=o(1)$ and
   \begin{equation*}
        \lim_{n \to \infty} \frac{\EXP R_n}{n^{1-\log(2)-\varepsilon}}  = \infty.
    \end{equation*} 
This implies that degree centrality is not suitable for root estimation and that $R_n$ is not persistent. 

Banerjee and Bhamidi \cite{BanerjeeBhamidiHubs} studied the index $I_n$ of the maximal degree vertex. They
showed that
    \begin{equation*}
        \frac{\log I_n}{\log n} \xrightarrow[n\to\infty]{P} 1-\frac{1}{2\log(2)}~.
    \end{equation*}
It follows that, for any $\varepsilon>0$, 
    \begin{equation*}
        \lim_{n \to \infty} \frac{\EXP I_n}{n^{1-1/(2\log(2))-\varepsilon}} = \infty.
    \end{equation*} 

    Eslava \cite{eslavadegree} proved that the distance of the degree center from the root satisfies a central limit theorem with mean $\mu \log(n)$, where $\mu=1-1/\log 4$, and variance $\sigma^2 \log(n)$ with $\sigma=1-1/\log 16$.
    This implies that $I_n$ is not tight, and it is easy to see that it is not persistent either.

\section{Rumor centrality}
\label{sec:rumor}

Shah and Zaman \cite{rumor} introduced the concept of rumor centrality in trees. They showed that
in a diffusion process over infinite regular trees, the likelihood of a vertex being the root
is determined by its rumour centrality. Crane and Xu \cite{crane2023} proved that the same holds
in uniform attachment trees as well. As a consequence, for every $\varepsilon>0$, the set of vertices of smallest
size that contains the root with probability at least $1-\varepsilon$ contains the $K(\varepsilon)$ most central 
vertices according to rumor centrality, for some positive integer $K(\epsilon)$.
Addario-Berry et al. \cite{addarioRumour} proved that one can take
$K(\varepsilon) \le C\exp \left(c\sqrt{\log(1/\varepsilon)}\right)$ for some positive constants $c,C$.
As shown by Bubeck et al. \cite{findingadam}, this bound is optimal up to the constant factors.
The upper bound of \cite{addarioRumour} implies that for all $n$ and $t\ge 1$,
\[
   \PROB\{ R_n \ge t\} \le \exp
   \left(-\left(\frac{1}{c}\log \left(\frac{t}{C}\right)\right)^2 \right)~.
 \]
 Thus, we also have that the expected rank of the root is $\EXP R_n = O(1)$. Note that none of the
 other centrality measures studied in this paper have this property.

 Recall from Lemma \ref{propertiesClosenessrumor} that the rumor center coincides with the centroid.
 Hence, $\PROB\{R_n=1\}=\PROB\{I_n=1\} \to 1- \log 2$ as $n\to \infty$ and $I_n$ is persistent.

 In the rest of this section, we show that the rank of the root $R_n$ is persistent.

 \subsection{Persistence of the rank of the root}
 
Let $V^{(k)} = \{v\in \N:\text{dist}_v(1,v) = k \}$ be the set of vertices at distance $k$ to the root. 

\begin{lemma}\label{persistenceDistance}
    Let $k>0$ be a constant. For each $v\in \N$ let $\mathcal{C}_v= \mathds{1}_{\{\exists \;n: \varphi_n(v) \le \varphi_n(1)\}}$. Then, almost surely, for any $k>0$, $ {\sum}_{v\in V^{(k)}} \mathcal{C}_v < \infty$. 
\end{lemma}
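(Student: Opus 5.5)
The plan is to reduce the statement to a comparison of subtree sizes along the path from the root to $v$, and then to use the summability in Lemma \ref{diagonalRW}. Fix $k$ and a vertex $v\in V^{(k)}$, and let $1=w_0,w_1,\ldots,w_k=v$ be the path from the root to $v$.

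\textbf{Reduction to subtree sizes.} Rerooting from $1$ to $v$ changes only the subtrees hanging off this path, so
\[
\frac{\varphi_n(v)}{\varphi_n(1)}=\prod_{i=1}^{k}\frac{|(T_n,v)_{w_{i-1}\downarrow}|}{|(T_n,1)_{w_i\downarrow}|}.
\]
Each denominator satisfies $|(T_n,1)_{w_i\downarrow}|\le |(T_n,1)_{w_1\downarrow}|$. Each numerator satisfies $|(T_n,v)_{w_{i-1}\downarrow}|\ge n-|(T_n,1)_{w_1\downarrow}|$, since it contains every vertex outside $(T_n,1)_{w_1\downarrow}$.

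Write $B_n=|(T_n,1)_{v\downarrow}|$. The event $\varphi_n(v)\le\varphi_n(1)$ forces some factor to be at most $1$. A cruder but more useful consequence comes from the last factor: the numerator is at least $n-B_n-(\text{stuff})$, so it is more convenient to argue from the product directly. Using $|(T_n,1)_{w_i\downarrow}|\le n$ for $i<k$ and the lower bound on the numerators, we get
\[
(n-|(T_n,1)_{w_1\downarrow}|)^k\le n^{k-1}B_n.
\]
Hence, on the event $\varphi_n(v)\le\varphi_n(1)$,
\[
\frac{B_n}{n}\ge \Big(1-\frac{D_n^*}{n}\Big)^k,
\]
where $D_n^*$ is the largest root-subtree size. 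So $v$ can beat the root only if its own subtree occupies a fraction of the tree bounded below by a quantity that depends only on $k$ and on the root's largest branch.

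\textbf{Applying the urn estimate.} By Lemma \ref{maxSubtreeDickman}, $D_n^*/n$ converges almost surely to $D^\dagger<1$, and $D_n^*<n$ for every $n\ge2$. Hence there is a random $\delta>0$ such that $1-D_n^*/n\ge\delta$ for all $n\ge2$. For a fixed deterministic $x\in(0,1)$, work on the event $G_x=\{\inf_n(1-D_n^*/n)^k\ge x\}$. On $G_x$, $\mathcal{C}_v=1$ implies that $B_n/n\ge x$ at some time $n\ge v$.

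The pair $(n-B_n,B_n)$ for $n\ge v$ is a two-colour P\'olya urn started from $(v-1,1)$. In the notation of Lemma \ref{diagonalRW} with $a=v-1$, the event that $B_n/n\ge x$ at some time is contained in $E_{v-1,1-x}$, since the proportion of the majority colour drops below $1-x$. Lemma \ref{diagonalRW} gives
\[
\sum_{v}\PROB(E_{v-1,1-x})<\infty,
\]
and therefore $\sum_{v\in V^{(k)}}\PROB(\mathcal{C}_v=1,\,G_x)<\infty$. Borel--Cantelli then shows that only finitely many $v\in V^{(k)}$ have $\mathcal{C}_v=1$ on $G_x$. Letting $x\downarrow0$ along a countable sequence, $\PROB(G_x)\to1$, which gives the result almost surely. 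Taking a countable union over $k$ yields the statement for all $k$ simultaneously.

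\textbf{Main obstacle.} The delicate step is the inequality relating the rumor ratio to $B_n$, namely making sure the bound uses only $B_n$ and root-level quantities and not subtree sizes of intermediate path vertices, which could be large. I would handle this as above: bound every denominator except the last by $n$, which is valid because $k$ is fixed, so the factor $n^{k-1}$ is harmless. A second subtlety is that the threshold $(1-D_n^*/n)^k$ is random. This is handled by intersecting with the deterministic-threshold events $G_x$, so that Lemma \ref{diagonalRW} applies with a fixed $x$.
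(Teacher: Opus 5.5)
Your proof is correct, and it takes a genuinely different route from the paper's.

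\textbf{The paper's route.} The paper argues by induction on the depth. The base case is Lemma~\ref{diagonalRW} with $x=1/2$. In the inductive step it fixes a vertex $u_k$ at depth $k$ and notes that all children of $u_k$ share the same path multiplier in the comparison $\varphi_n(1)/\varphi_n(w)\ge 1$. It lets that multiplier converge to a random constant $\mu_w$ and applies Lemma~\ref{diagonalRW} with threshold $1/(1+\mu_w)$, so only finitely many children of each $u_k$ ever beat the root. It concludes with the induction hypothesis and the fact that a vertex can beat the root only if its parent does (convexity of $\log\varphi_n$ along a path).

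\textbf{Your route.} You bound every intermediate factor crudely and obtain $B_n/n\ge (1-D_n^*/n)^k$. This threshold depends only on $k$ and the root's largest branch, and it is bounded below by the a.s.\ positive constant $\delta^k$, uniformly in $v$ and $n$. You then localize on the events $G_x$, so Lemma~\ref{diagonalRW} is used exactly as stated, with a fixed deterministic $x$. The paper instead applies it with the random, only asymptotically constant threshold $1/(1+\mu_w)$ and leaves that localization implicit. You also need neither the induction, nor Lemma~\ref{lowerboundsubtreesize}, nor the parent-before-child monotonicity.

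\textbf{What each buys.} Your threshold decays exponentially in $k$, so it gives nothing uniform in depth. That does not matter for fixed $k$; uniformity in depth is what Lemma~\ref{finiteDistance} handles. In exchange, your argument is shorter and fully rigorous with the paper's own lemmas.

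\textbf{Two small corrections.}
\begin{enumerate}
\item $B_n/n\ge x$ only gives $(n-B_n)/n\le 1-x$, not $<1-x$. So use $E_{v-1,1-x/2}$ rather than $E_{v-1,1-x}$.
\item $V^{(k)}$ is a random set, so the Borel--Cantelli sum should run over all $v\ge 2$ for the events $\{\exists\, n\ge v: B_n/n\ge x\}$. That is exactly what Lemma~\ref{diagonalRW} controls. On $G_x$, every $v\in V^{(k)}$ with $\mathcal{C}_v=1$ lies in this a.s.\ finite set.
\end{enumerate}
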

\begin{proof}
    The proof proceeds by induction on the distance from a vertex to the root. The base case is straightforward. Consider vertices $v\in V^{(1)}$, $v\le n$, that is, vertices that attach to the root. Then
    \begin{equation*}
        \varphi_n(v) \le \varphi_n(1) \iff |(T_n,1)_{v\downarrow}| \ge |(T_n,v)_{1\downarrow}|~.
    \end{equation*}
    By Lemma \ref{lowerboundsubtreesize} and Lemma \ref{diagonalRW} with $x=1/2$, it follows that for $E_v = \mathds{1}_{\{\exists n: |(T_n,1)_{v\downarrow}| \ge |(T_n,v)_{1\downarrow}|\}}$, almost surely $\sum_{v=1}^{\infty} E_v< \infty$, and therefore the base case holds. 

    Now assume that this holds for some $k>1$. Let $v\in V^{(k+1)}$, $v\le n$. 
    In the path from the root to vertex $v$, write $u_i$ for the $i$\textsuperscript{th} vertex in the path, $u_0$ for the root and $u_{k+1}=v$,  and write $T_{n,u_i}^v$ for the subtree rooted at vertex $u_i$ obtained by removing all edges in the path from the root to vertex $v$ in $T_n$. 
    Recall that for a vertex $v$, at distance $k+1$ to the root, to be more central than the root, we must have
    \begin{align*}
        \varphi_n(v) \le \varphi_n(1) \iff \prod_{i=1}^{k+1} |(T_n,1)_{u_i}| \ge \prod_{i=0}^{k} |(T_n,v)_{u_i}|~.
    \end{align*}
    Note the equivalence
    \begin{align}
        \frac{\varphi_n(1)}{\varphi_n(v)} \ge 1  
         \iff |(T_n,1)_{v\downarrow}| \ge |(T_n,v)_{1\downarrow}|\prod_{i=1}^{k} \frac{\sum_{\ell=0}^{k+1-i}|T_{n,u_{\ell}}^v|}{\sum_{j=i}^{k+1}|T_{n,u_j}^v|}~. \label{vlowerbound}
    \end{align}
    For any vertex $w \le n$ that is a sibling of $v$ in $T_n$, we have
    \begin{align}
        \frac{\varphi_n(1)}{\varphi_n(w)} 
        \ge 1   
        \iff |(T_n,1)_{w\downarrow}| \ge |(T_n,w)_{1\downarrow}|\prod_{i=1}^{k} \frac{\sum_{\ell=0}^{k+1-i}|T_{n,u_{\ell}}^v|}{\sum_{j=i}^{k+1}|T_{n,u_j}^v|}~.\label{siblinglowerbound}
    \end{align}
    For sibling vertices $v$ and $w$,  we have $(T_n,w)_{1\downarrow} = (T_n,v)_{1\downarrow}$. Thus, the right-hand sides of equations (\ref{vlowerbound}) and (\ref{siblinglowerbound}) are equal.  
\begin{figure}[!htb]
    \centering
\begin{tikzpicture}[
    vertex/.style={circle, draw=black, fill=black, thick, minimum size=5mm},
    dots/.style={},
    ellipsoid/.style={ellipse, draw=black, fill=black!20, thick, minimum width=7.5mm, minimum height=15mm},
    edge/.style={thick}
]

\node[vertex] (v1) at (0,0) {};
\node[vertex] (v2) at (1.3,0) {};
\node[vertex] (v3) at (2.6,0) {};
\node[dots] (v4) at (3.9,0) {$\ldots$};
\node[vertex] (v6) at (5.2,0) {};
\node[vertex] (v7) at (6.5,0) {};

\draw[edge] (v1) -- (v2);
\draw[edge] (v2) -- (v3);
\draw[edge] (v3) -- (v4);

\node[ellipsoid, anchor=north] at (v1.north) {};
\node[ellipsoid, anchor=north] at (v2.north) {};
\node[ellipsoid, anchor=north] at (v3.north) {};
\node[ellipsoid, anchor=north] at (v6.north) {};
\node[ellipsoid, anchor=north] at (v7.north) {};

\node at (0,0.6) {$u_0$};
\node at (1.3,0.6) {$u_1$};
\node at (2.6,0.6) {$u_2$};
\node at (5.2,0.6) {$u_k$};
\node at (6.5,0.6) {$v$};

\node at (0,-0.6) {$\scriptstyle T_{n,u_0}^v$};
\node at (1.3,-0.6) {$\scriptstyle T_{n,u_1}^v$};
\node at (2.6,-0.6) {$\scriptstyle T_{n,u_2}^v$};
\node at (5.2,-0.6) {$\scriptstyle T_{n,u_k}^v$};
\node at (6.5,-0.6) {$\scriptstyle T_{n,v}^v$};

\node[vertex] (v11) at (0,0) {};
\node[vertex] (v12) at (1.3,0) {};
\node[vertex] (v13) at (2.6,0) {};
\node[dots] (v14) at (3.9,0) {$\ldots$};
\node[vertex] (v16) at (5.2,0) {};
\node[vertex] (v17) at (6.5,0) {};

\draw[edge] (v11) -- (v12);
\draw[edge] (v12) -- (v13);
\draw[edge] (v13) -- (v14);
\draw[edge] (v14) -- (v16);
\draw[edge] (v17) -- (v16);

\end{tikzpicture}
\caption{Representation of the subtrees in the path from the root to vertex $v$.}
    \label{subtreesPathFigure}
\end{figure}
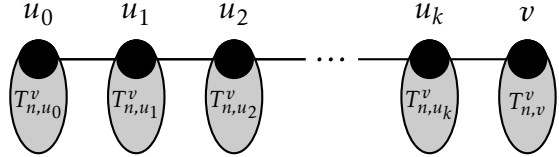

    Consider a Pólya urn with $k+2$ colors representing the subtrees attached to each of the vertices in the path from the root to vertex $v$, $T_{n,u_i}^v$ for $i\in\{0,1,\ldots,k+1\}$ (see figure \ref{subtreesPathFigure}). It is well-known that the vector of the sizes of the subtrees divided by $n$ converges almost surely to a  Dirichlet-distributed vector. Hence, for $n$ large enough, the product on the right-hand side of (\ref{vlowerbound}) converges to some constant, as each term $|T_{n,u_j}^v|$ in the sum converges to some constant, and hence each factor in the product converges. 

    Therefore, for $n$ large, the term $\prod_{i=1}^{k} \frac{\sum_{\ell=0}^{k+1-i}|T_{n,u_{\ell}}^v|}{\sum_{j=i}^{k+1}|T_{n,u_j}^v|}$ converges to some constant $\mu_w$. Consequently, the problem can be cast in the setting of Lemma \ref{diagonalRW}, as by Lemma \ref{lowerboundsubtreesize} there exists a random variable $\xi_w'$ such that $|(T_n,w)_{1\downarrow}| \ge n/\xi_w'$, and thus for each new vertex that attaches to vertex $u_k$, a random walk as in Lemma \ref{diagonalRW} is started. Almost surely---by Lemma \ref{diagonalRW} with $x = 1/(1+\mu_w)$---only finitely many children of $u_k$ can be more central than the root.

    For each vertex $v_k$ at distance $k$ from the root, 
    only finitely many of its descendants can become more central than the root almost surely. 
    By the induction hypothesis, for vertices at distance $k$, almost surely, only finitely many can ever become more central than the root. Moreover, since for a vertex at distance $k+1$ to be more central than the root, its parent must also be more central than the root, there are only finitely many vertices at distance $k+1$ that become more central than the root. Hence, by induction, the result holds almost surely for all $k\ge 1$.
\end{proof}

\begin{lemma}\label{finiteDistance}
    There exists a random variable $K>0$ with $\PROB(K<\infty)=1$ such that no vertex $v\in V^{(j)}$ for $j\ge K$  can ever become as central as the root. 
\end{lemma}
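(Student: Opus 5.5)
The plan is to show that a vertex far from the root can never be as rumor-central as the root, for deterministic combinatorial reasons. The random ingredient will only be a lower bound on the size of the subtree containing the root. I will write the ratio $\varphi_n(1)/\varphi_n(v)$ along the path $u_0=1,u_1,\ldots,u_j=v$, as in the proof of Lemma \ref{persistenceDistance}. Only the subtree sizes of vertices on this path change when the tree is rerooted from $1$ to $v$, so
\[
\frac{\varphi_n(1)}{\varphi_n(v)}=\prod_{i=1}^{j}\frac{|(T_n,1)_{u_i\downarrow}|}{|(T_n,v)_{u_{i-1}\downarrow}|}.
\]
Put $a_i=|(T_n,1)_{u_i\downarrow}|$, which is decreasing in $i$, and $b_i=|(T_n,v)_{u_{i-1}\downarrow}| = n-a_i$. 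The condition $\varphi_n(v)\le\varphi_n(1)$ then reads $\prod_{i=1}^j a_i\ge\prod_{i=1}^j (n-a_i)$.

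The key step is to pair factors from the two ends of the path. For $i\le j/2$, factor $i$ is at most $a_i/(n-a_i)\le (n-1)/1$, which is useless by itself. However, $a_{j+1-i}\le a_i$ always holds, so the product of factors $i$ and $j+1-i$ is at most
\[
\frac{a_i}{n-a_i}\cdot\frac{a_{j+1-i}}{n-a_{j+1-i}}.
\]
This does not immediately give something below one. A cleaner route is to use a uniform random bound. By Lemma \ref{lowerboundsubtreesize}, the subtree $(T_n,u_1)_{1\downarrow}$ containing the root has size at least $n/\xi$ for all $n$, with $\xi<\infty$ almost surely. Hence every $a_i\le n(1-1/\xi)$ for $i\ge1$, uniformly over all vertices and all times (the constant depends on $u_1$; to make it uniform I will use that, by Corollary \ref{alternateformHoppeDominating} and Theorem \ref{Hoppelargest}, the root's subtrees other than the eventually-largest one are handled by Lemma \ref{persistenceDistance} with $k$ bounded, while for the dominant branch $\mathcal I$ there is a single $\xi_{\mathcal I}$). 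I also need a bound on $a_i$ that decays along the path. Since $a_i\ge a_{i+1}+1$, and more usefully $a_{i}\le n-(i-1)-|T^v_{n,u_0}|$, the factors with $a_i\le n/2$ are each $\le 1$. The factors with $a_i>n/2$ form an initial segment of the path. On that segment, the vertices $u_1,\ldots,u_m$ all have subtrees of size more than $n/2$ and lie on a single ray.

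Next I will control the length $m$ of that initial segment uniformly. The number of vertices whose subtree ever exceeds a fixed fraction of $n$ is almost surely finite: this is Lemma \ref{diagonalRW} applied with $x=1/2$ together with Borel--Cantelli, exactly as in the proof of Theorem \ref{BetweennessPersistenceCentroid}. So there is an almost surely finite depth $K_1$ beyond which no vertex ever has $a_i>n/2$, and hence $m\le K_1$. The factors beyond index $m$ are each at most $1$. More quantitatively, for $i>m$ the factor is at most $a_i/(n-a_i)\le a_i/(n/2)$. The product of the first $m$ factors is at most $\prod_{i\le m}a_i/(n-a_i)\le (\xi)^{K_1}$, a finite random constant $C$. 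It then suffices to show that the tail product $\prod_{i>m}2a_i/n$ is eventually less than $1/C$ once $j$ is large.

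The main obstacle is this tail product. The factors are at most $1$, but I need a uniform geometric decay over all paths of length at least $K$ and all times $n$. I would first try a second threshold: with $x=1/(4C)$-type constants, Lemma \ref{diagonalRW} again shows that almost surely only finitely many vertices ever carry a subtree fraction at least $\delta$, so they lie within some finite depth $K_\delta$. Beyond depth $K_\delta$, every factor is at most $2\delta$. Choosing $\delta=1/4$ and $j\ge K_\delta+\log_2 C+1$ forces the product below $1$, so I will take $K=K_\delta+\lceil\log_2 C\rceil+1$. The delicate point is that Lemma \ref{diagonalRW} is stated for a fixed starting urn, while here the urns start at each vertex's birth. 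I expect to handle this by summing over vertices and using that a vertex born at time $a$ starts with relative subtree size $1/a$, precisely the setting $(a,1)$ of that lemma.
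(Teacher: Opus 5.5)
Your argument is correct, but it is organized differently from the paper's. The paper conditions on which vertex is the persistent centroid $v^*$ (Theorem~\ref{JordanPersistence}). It then compares $\varphi_n(1)$ and $\varphi_n(v)$ through a pivot, either an interior vertex $u$ of the root--centroid path or $v^*$ itself. Finally it uses the almost sure limits of finitely many subtree fractions ($\gamma,\beta_u<1/2$, resp.\ $D<1/2$) to get a distance threshold valid after a random time.

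You instead telescope directly from the root, $\varphi_n(1)/\varphi_n(v)=\prod_{i=1}^j a_i/(n-a_i)$. The one probabilistic input is a Borel--Cantelli fact: almost surely only finitely many vertices ever carry a fixed fraction $\delta$ of the tree. This is Lemma~\ref{diagonalRW} summed over birth times, exactly as in Theorem~\ref{BetweennessPersistenceCentroid}. Beyond the finite random depth $K_\delta$ with $\delta=1/4$, every factor is below $1/3$. The factors exceeding one sit at depths at most $K_1$ and are bounded by a random constant, so geometric decay wins.

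This route dispenses with centroid persistence and with the case split according to where $v$ hangs relative to the root--centroid path. It also gives a bound uniform over all times $n$, not only after a random time. The paper's version, in exchange, ties the threshold to explicit limiting quantities along the path to the centroid.

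The one step you should tighten is the uniform bound on the factors exceeding one. Lemma~\ref{lowerboundsubtreesize} as stated lower-bounds $|(T_n,1)_{v\downarrow}|$, not the component containing the root. Your appeal to Lemma~\ref{persistenceDistance} ``with $k$ bounded'' does not produce a constant uniform in $u_1$ and $n$.

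The fix is short. A factor exceeds one only if $a_1>n/2$. In that case $a_1=D_n^*$, and every factor is at most $D_n^*/(n-D_n^*)$. Moreover $\rho=\sup_n D_n^*/n<1$ almost surely: $D_n^*\le n-1$ for each $n$, and $D_n^*/n$ converges to a max-Dickman-Goncharov limit, which is $<1$ almost surely (Lemma~\ref{maxSubtreeDickman}). So you may take $\xi=1/(1-\rho)$. Equivalently, the depth-one vertices that ever carry half the tree form a finite set, and each of them has $\inf_n (n-a_1)/n>0$ almost surely.

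Minor points:
\begin{itemize}
\item A vertex born at time $a$ starts from composition $(a-1,1)$, not $(a,1)$; this is harmless.
\item Your worry about applying Lemma~\ref{diagonalRW} is unfounded, since its conclusion is precisely summability over starting states.
\item The exploratory detours (pairing factors, a threshold depending on $C$) should be removed. Keep $\delta=1/4$ deterministic, as Borel--Cantelli requires.
\end{itemize}
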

\begin{proof}
Define the event 
\begin{equation*}
    \mathcal{A} = 
\left\{  
\bigcap_{d \in \mathbb{N}} \bigcup_{v \in \N}
\bigcup_{n \geq v} 
\left[ 
\left[
\text{dist}(v,1) \ge d 
\right] 
\cap
\left[ 
\psi_{T_n}(v) \le \psi_{T_n}(\text{1})
\right] 
\right]  
\right\}~.
\end{equation*}
Hence, we aim to show that $\PROB(\mathcal{A})=0$. By persistence of the rumor center and the union bound, the probability can be upper-bounded as 
\begin{align*}
    \PROB(\mathcal{A}) \le \sum_{k=1}^{\infty} \PROB(\mathcal{A}\cap \{\text{vertex }k \text{ is the persistent centroid}\})~.
\end{align*}
Let $v^*$ be the persistent centroid. We know that the sizes of the subtrees obtained by removing the edges among the first $k$ vertices divided by $n$ converge almost surely to a Dirichlet-distributed random vector. Let $u$ be a vertex in the path from the root to the persistent centroid, excluding the endpoints. Then, an upper bound of the centrality of the root is
\begin{align*}
    \varphi_n(1) \le \left(\frac{n-|(T_n,u)_{1\downarrow}|}{|(T_n,u)_{1\downarrow}|} \right)^{\text{dist}_n(1,u)} \varphi_n(u)~.
\end{align*}
Similarly, for any vertex $v$ in the subtree of $u$, $T_{n,u}^{v^*}$, a lower bound of its centrality is
\begin{align*}
    \varphi_n(v) \ge \left(\frac{n-|T_{n,u}^{v^*}|}{|T_{n,u}^{v^*}|} \right)^{\text{dist}_n(v,u)} \varphi_n(u)~.
\end{align*}

Thus, if
\begin{align*}
\text{dist}_n(v,u) &> \text{dist}_n(1,u) \phi_n(u)~,
\end{align*}
where
\begin{align*}
\phi_n(u)
=
\frac{
\log \left(\frac{1-|(T_n,u)_{1\downarrow}|/n}{|(T_n,u)_{1\downarrow}|/n} \right)
}
{
\log\left( \frac{1-|T_{n,u}^{v^*}|/n}{|T_{n,u}^{v^*}|/n}\right)
}~,
\end{align*}
then $\varphi_n(v) > \varphi_n(1)$. 
 From standard Pólya urn convergence results, we have   
\begin{align*}
    \frac{|(T_n,u)_{1\downarrow}|}{n}\xrightarrow[\ n \to \infty\ ]{\mathrm{a.s.}} \gamma \quad\text{ and } \quad \frac{|T_{n,u}^{v^*}|}{n}\xrightarrow[\ n \to \infty\ ]{\mathrm{a.s.}} \beta_u
\end{align*}
for some random variables $\gamma$ and $\beta_u$, with $\gamma,\beta_u>0$. By the continuous mapping theorem, $\phi_n(u)$ converges almost surely to some constant $\phi(u)$, provided that $\gamma,\beta_u<1/2$, which is the case as we have assumed that $u$ is not the persistent centroid. Hence, there exists some almost surely finite time $N>0$ such that for all $n\ge N $ all vertices in the subtree $T_{n,u}^{v^*}$ at distance larger than $\phi(u) \text{dist}(1,u)$ to the root are less central than the root.

One can similarly consider a vertex $v$ that attaches to subtrees of vertices whose path to the root passes through the persistent centroid $v^*$. Let $u_v$ be the first vertex from the path of $v^*$ to $v$ that is a neighbor of $v^*$. Define $N_n^{\text{ch}}(v^*)$ as the set of children of $v^*$ in $T_n$. 
A lower bound of the centrality for such vertices is
\begin{align*}
    \varphi_n(v) 
    &\ge \left(\frac{n-|(T_n,1)_{u_v\downarrow}|}{|(T_n,1)_{u_v\downarrow}|} \right)^{\text{dist}_n(v,v^*)} \varphi_n(v^*) \\
    &\ge \left(\frac{n-\max_{k\in N_n^{\text{ch}}(v^*)}|(T_n,1)_{k\downarrow}|}{\max_{k\in N_n^{\text{ch}}(v^*)}|(T_n,1)_{k\downarrow}|} \right)^{\text{dist}_n(v,v^*)} \varphi_n(v^*)~.
\end{align*}
Taking logarithms, we note that $\varphi_n(v) > \varphi_n(1)$ if and only if
\begin{align*}
    \text{dist}_n(v,v^*) &> \text{dist}_n(1,v^*)
    \frac{
    \log \left(\frac{1-|(T_n,v^*)_{1\downarrow}|/n}{|(T_n,v^*)_{1\downarrow}|/n} \right)}
    {\log\left( \frac{1-\max_{k\in N^{\text{ch}}(v^*)}|(T_n,1)_{k\downarrow}|/n}{\max_{k\in N^{\text{ch}}(v^*)}|(T_n,1)_{k\downarrow}|/n}\right)
    }~. 
\end{align*}
The convergence of the subtree sizes implies that $\max_{k\in N_n^{\text{ch}}(v^*)}|(T_n,1)_{k\downarrow}|/n \xrightarrow[n\to\infty]{a.s.} D$, for some random variable $D$. If $D\ge 1/2$, then $v^*$ is not the persistent centroid, and therefore $D<1/2$. Hence, there exists an integer $K>0$ such that all vertices at a distance further than $K$ from the persistent centroid have a lower bound for the rumor centrality that exceeds the upper bound of the root's centrality. Thus, almost surely, there exists a finite time after which no vertex in the subtree $(T,1)_{v^*\downarrow}$ at distance larger than $K>0$ from the root becomes more central than the root. Therefore, for all $v^*$, $\PROB(\mathcal{A}\cap \{\text{vertex $v^*$ is the persistent centroid}\})=0$. 
\end{proof}

\begin{theorem}\label{rumorPersistence}
  The rumor rank  $R_n$  of the root is persistent.
\end{theorem}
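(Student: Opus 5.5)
We will combine Lemma \ref{persistenceDistance} and Lemma \ref{finiteDistance} to show that, almost surely, only finitely many vertices are ever at least as central as the root. Persistence of $R_n$ then follows from the fact that any fixed pair of vertices changes relative order only finitely often.

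\textbf{Step 1: finitely many competitors.} Let $\mathcal{V}=\{v\ge 2: \exists\, n,\ \varphi_n(v)\le\varphi_n(1)\}$ be the set of vertices that are ever at least as central as the root. By Lemma \ref{finiteDistance}, there is an almost surely finite $K$ such that every $v\in\mathcal{V}$ satisfies $\text{dist}(1,v)<K$. Hence
\[
|\mathcal{V}|\le \sum_{k=1}^{K-1}\sum_{v\in V^{(k)}}\mathcal{C}_v .
\]
By Lemma \ref{persistenceDistance}, almost surely every inner sum is finite for all $k$ simultaneously, since there are countably many $k$ and we intersect countably many probability-one events. A finite sum of finitely many finite terms is finite, so $|\mathcal{V}|<\infty$ almost surely.

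\textbf{Step 2: pairwise stabilization.} For each fixed $v\in\mathcal{V}$ we show that the sign of $\varphi_n(v)-\varphi_n(1)$ is eventually constant. Let $u_0=1,u_1,\dots,u_{k+1}=v$ be the path from the root to $v$, with subtrees $T^v_{n,u_i}$ as in the proof of Lemma \ref{persistenceDistance}. The ratio $\varphi_n(1)/\varphi_n(v)$ is a product of ratios of partial sums of the sizes $|T^v_{n,u_i}|$. After division by $n$, these sizes form a $(k+2)$-color Pólya urn vector, which converges almost surely to a Dirichlet vector with an absolutely continuous law. Therefore the ratio converges almost surely to a continuous function of that vector. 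The set where this limit equals exactly $1$ is a proper algebraic hypersurface of the simplex, so it has Dirichlet measure zero. Consequently the limit is almost surely different from $1$, and the sign stabilizes after an almost surely finite time $N_v$. This is the analogue for rumor centrality of Lemma \ref{finitechangeneighbors} and Lemma \ref{betweenPersistanceTwoVertices}, extended to non-neighbors.

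\textbf{Step 3: conclusion.} Set $N=\max_{v\in\mathcal{V}}N_v$, which is almost surely finite because $\mathcal{V}$ is finite. Take any $n\ge N$ and any vertex $w$ (in particular any vertex born after time $N$). If $w\notin\mathcal{V}$, then $w$ is strictly less central than the root at every time. If $w\in\mathcal{V}$, its order relative to the root is already fixed. Under the pessimistic tie-breaking rule, the set of vertices ranked above the root is then constant for $n\ge N$. Hence $R_n$ equals $1$ plus the eventually constant count $|\{v\in\mathcal{V}: \varphi_n(v)<\varphi_n(1)\}|$, which is persistent.

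\textbf{Main obstacle.} The delicate step is the measure-zero claim in Step 2. One must check that the limiting equation $\varphi(1)=\varphi(v)$ does not hold identically on the simplex. Evaluating at a single point where the two sides differ suffices: for example, take $|T^v_{u_{k+1}}|$ close to $1$, where $v$ is strictly more central. Once the equation is not an identity, the zero set of a nontrivial real-analytic function has Lebesgue measure zero, and absolute continuity of the Dirichlet law finishes the argument.
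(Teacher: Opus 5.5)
Your proposal is correct and follows the paper's proof. Like the paper, you first use Lemmas \ref{persistenceDistance} and \ref{finiteDistance} to show that the set of vertices ever at least as central as the root is almost surely finite, and then stabilize each such vertex pairwise against the root; the one difference is that where the paper simply cites Lemma \ref{finitechangeneighbors} (stated only for neighbouring pairs), your Dirichlet-limit argument shows the limiting ratio $\varphi(1)/\varphi(v)$ is a non-constant rational function of an absolutely continuous vector, hence almost surely different from $1$, which covers the non-adjacent pairs that actually arise, so your Step 2 is the more complete justification.
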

\begin{proof}
By Lemmas \ref{persistenceDistance} and \ref{finiteDistance}, there exists an almost surely finite distance beyond which no vertex can ever surpass the root in centrality. Furthermore, for any fixed distance, almost surely only finitely many vertices ever become more central than the root. Consequently, the total set of vertices that are ever more central than the root is almost surely finite. Since Lemma \ref{finitechangeneighbors} establishes that any finite set of vertices possesses a persistent relative ranking, it follows that the rank of the root is persistent almost surely.
\end{proof}

\section{Closeness centrality}
\label{sec:closeness}

In this section, we explore properties of the rank $R_n$ of the root when vertices are ranked according to their
closeness centrality. Recall that the most central vertex coincides with the centroid, and therefore the properties
of $I_n$ are well understood. See Section \ref{sec:jordan}. In contrast, 
closeness centrality has not been explored in the context of root estimation.
Here we show that, perhaps surprisingly, $R_n$ is not tight, that is, closeness centrality is not good for
root estimation. Moreover, we show that $\EXP R_n =  \exp\left(\Omega\left(
(\log(n))^{1/3}\right) \right)$ and that $R_n$ is not persistent.

\subsection{Tightness, expected rank of the root, and root estimation}

\begin{theorem}\label{ClosenessRootEstimation}
Closeness centrality is not good for root estimation.
\end{theorem}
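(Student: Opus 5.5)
The plan is to exhibit an event of probability bounded away from zero, not depending on $n$, on which the closeness rank of the root tends to infinity. This contradicts tightness of $\{R_n\}$.

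The basic identity comes from the representation in the proof of Lemma \ref{propertiesClosenessrumor}: $\mathcal{C}_{T_n}(v)=\sum_{u\neq v}|(T_n,v)_{u\downarrow}|$. Take an edge $\{a,b\}$ with $b$ a child of $a$ in the tree rooted at $1$. Then
\[
\mathcal{C}_{T_n}(b)-\mathcal{C}_{T_n}(a)=n-2|(T_n,1)_{b\downarrow}|.
\]
Summing along the path $1=u_0,u_1,\ldots,u_d=v$ gives
\[
\mathcal{C}_{T_n}(v)-\mathcal{C}_{T_n}(1)=\sum_{i=1}^{d}\bigl(n-2|(T_n,1)_{u_i\downarrow}|\bigr).
\]
Hence every vertex on the descending path where the subtree fraction exceeds $1/2$ lowers the closeness score by a linear amount. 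Once the accumulated gain exceeds $n$, even children with tiny subtrees of the last such vertex beat the root.

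Concretely, I would work in the Ulam--Harris embedding with the limiting values $\text{Value}(v)$ from the proof of Theorem \ref{JordanUpperBoundProbability}. Let $u_1$ be the first child of the root (vertex $2$) and $u_2$ the first child of $u_1$. Consider the event
\[
A=\{\text{Value}(u_1)>0.8,\ \text{Value}(u_2)>0.8\}.
\]
By the stick-breaking description $\text{Value}(u_1)\sim U_1$ and $\text{Value}(u_2)=\text{Value}(u_1)U'_1$ with independent uniforms, so $\PROB(A)>0$. By almost sure convergence of the Pólya urn proportions, on $A$ for all large $n$ both $|(T_n,1)_{u_1\downarrow}|$ and $|(T_n,1)_{u_2\downarrow}|$ exceed $0.8n$. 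Therefore $\mathcal{C}_{T_n}(u_2)\le \mathcal{C}_{T_n}(1)-1.2n$.

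Now take any child $w$ of $u_2$. Then $\mathcal{C}_{T_n}(w)=\mathcal{C}_{T_n}(u_2)+n-2|(T_n,1)_{w\downarrow}|<\mathcal{C}_{T_n}(1)$. So on $A$, for large $n$, $R_n$ is at least the number of children of $u_2$ in $T_n$. That degree tends to infinity almost surely, since vertex $u_2$ receives a new child at step $m$ with probability $1/m$ independently, and $\sum 1/m=\infty$. Consequently, for every fixed $K$, $\liminf_{n}\PROB(R_n>K)\ge \PROB(A)>0$, so no $K$ works for $\epsilon<\PROB(A)$.

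The step needing the most care is turning the almost sure limits into an ``eventually for all large $n$'' statement on $A$, uniformly enough to give the $\liminf$. I would handle it via Fatou/dominated convergence: $\mathbbm{1}_A\mathbbm{1}_{[R_n>K]}\to\mathbbm{1}_A$ almost surely. Ties cause no trouble, since the inequalities above are strict.
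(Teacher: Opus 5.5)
Your proposal is correct and takes essentially the same route as the paper. On a positive-probability event, the first child of the first child of the root (the paper's $v_{2,1}$, your $u_2$) carries a subtree of linear size above $3n/4$ (you use limiting values above $0.8$), which makes every child of that vertex strictly more closeness-central than the root, and its degree diverges. Your version is, if anything, more careful than the paper's: you make the passage to $\liminf_{n}\PROB(R_n>K)\ge\PROB(A)$ explicit via almost sure convergence and Fatou, and you use almost sure divergence of the degree of $u_2$ rather than the paper's high-probability degree bound.
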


\begin{proof}
  We show that on an event of positive probability, the rank of the root diverges.
  Consider the event when the first child of the first child of the root, which we denote by $v_{2,1}$, has a large subtree so that $|(T_n,1)_{v_{2,1}\downarrow}| >\frac{3n}{4}$. This happens with a limiting probability
  $\PROB(U_1U_{1,1} > \frac{3}{4})$, where $U_1$ and $U_{1,1}$ are independent uniform $[0,1]$ random variables.
 On this event, consider the closeness centrality of any leaf $v_{2,1,\ell}$ attached to vertex $v_{2,1}$:
    \begin{equation*}
        \mathcal{C}_n(v_{2,1,\ell}) = \mathcal{C}_n(v_{2,1}) + 2 \left( \frac{n}{2}-1 \right)~,
    \end{equation*}
    whereas the closeness centrality of the root equals
    \begin{equation*}
        \mathcal{C}_n(1) = \mathcal{C}_n(v_{2,1}) + 2 \left(|(T_n,1)_{v_{2,1}\downarrow}| - |(T_n,v_{2,1})_{1 \downarrow}|  \right)~.
    \end{equation*}
    Therefore,
    \begin{equation*}
        \mathcal{C}_n(1) > \mathcal{C}_n(v_{2,1,\ell}) \iff  \left(|(T_n,1)_{v_{2,1}\downarrow}| - |(T_n,v_{2,1})_{1 \downarrow}|  \right) > \frac{n}{2}-1~.
    \end{equation*}
    This occurs when $|(T_n,1)_{v_{2,1}\downarrow}| >\frac{3n}{4}$. Any non-leaf child of vertex $v_{2,1}$ is more central than a leaf child. So all neighbors of vertex $v_{2,1}$ are more central than the root conditional on $|(T_n,1)_{v_{2,1}\downarrow}| >\frac{3n}{4}$. 
    Conditioned on this event, the degree of vertex $v_{2,1}$ is at least $(1-\epsilon)\log(3n/4)$, and with high probability, all of them are more central than the root.
\end{proof}

\begin{theorem}\label{ClosenessExpected}
Let $R_n$ be the closeness rank of the root. Then
    \begin{equation*}
        \EXP R_n = \exp \left(\Omega\left( \log (n)^{1/3} \right)\right)~.
    \end{equation*} 
\end{theorem}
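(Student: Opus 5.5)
We bound $\EXP R_n$ from below by exhibiting, on an event of suitable probability, a large set of vertices that are all more central than the root. The key identity is the one from the proof of Theorem \ref{ClosenessRootEstimation}. For an edge $\{u,w\}$ we have
\[
\mathcal{C}_n(w)-\mathcal{C}_n(u)=|(T_n,w)_{u\downarrow}|-|(T_n,u)_{w\downarrow}|=n-2|(T_n,u)_{w\downarrow}|.
\]
Summing along a path shows that closeness is additive along paths. Let $(1=u_0,u_1,\ldots,u_k)$ be a path away from the root with subtree fractions $s_i=|(T_n,1)_{u_i\downarrow}|/n$. Then
\[
\mathcal{C}_n(1)-\mathcal{C}_n(u_k)=n\sum_{i=1}^k(2s_i-1).
\]
So if the root's first few descendants carry subtree fractions very close to $1$, the root is penalised by almost $n$ per step. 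Meanwhile a vertex $w$ hanging off $u_k$ at depth $j$ in a small subtree pays at most about $n$ per step. Hence every vertex $w$ within distance roughly $\sum_i(2s_i-1)$ of $u_k$ is more central than the root. This holds in particular for every descendant of $u_k$ (or of the $u_i$) at that depth.

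The plan is to choose a depth $k$ and require the event
\[
E_k=\{s_i\ge 1-\delta_i \text{ for } i\le k\}.
\]
By the Ulam-Harris embedding and Lemma \ref{maxSubtreeDickman}, the limiting fractions along the path of first children are products $U_1U_{1,1}\cdots$ of independent uniforms. Along the path of largest children they are products of max-Dickman-Goncharov variables. We choose $\delta_i=\delta$, which makes $\PROB(E_k)\gtrsim \delta^{k}$ in the limit. On $E_k$ we get $\mathcal{C}_n(1)-\mathcal{C}_n(u_k)\ge n k(1-2\delta)$. Then every vertex in the subtree of $u_k$ within distance about $k(1-2\delta)$ of $u_k$ beats the root, after adjusting for the shift $n-2|\cdot|\le n$ per step. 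The number of such vertices is at least the number of vertices at depth $\le ck$ in a \textsc{urrt} of size $\approx n(1-k\delta)$ rooted at $u_k$. For $k\ll\log n$ this is of order $(\log n)^{ck}/(ck)!$, or more crudely at least $\exp(ck\log\log n - ck\log k)$. This is the same mechanism as Theorem \ref{ClosenessRootEstimation}, iterated.

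Combining,
\[
\EXP R_n\ \ge\ \PROB(E_k)\cdot \#\{\text{vertices beating the root on }E_k\}\ \gtrsim\ \exp\bigl(-k\log(1/\delta)+ck\log(\log n/k)\bigr).
\]
The depth-$j$ counts in a \textsc{urrt} concentrate around $(\log n)^j/j!$ only for $j=o(\log n)$, and we also need $k\delta$ small. So we optimise, taking $\delta$ polynomially small in $k$ or $\log n$. Bookkeeping the deviation of depth profiles, the error terms in the path identity and the $k\delta$ loss in subtree size will yield a choice like $k\asymp(\log n)^{1/3}$, which gives $\exp(\Omega((\log n)^{1/3}))$. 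The exponent $1/3$ presumably arises from balancing $\PROB(E_k)=e^{-O(k\log k)}$ against a count that is only reliable when $k^3\lesssim\log n$, through variance control of the profile.

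The main obstacle is this last step. We need a uniform lower bound, holding with probability close to one, on the number of vertices at depth $\le ck$ below $u_k$, conditional on $E_k$, and this conditioning distorts the tree. The first approach is to use the Pólya-urn/Dirichlet description. Conditionally on the path fractions, the subtree of $u_k$ (minus the child $u_{k+1}$) is itself a \textsc{urrt} of a random size $\ge \beta n$. We then apply a second-moment bound for the profile at level $j$, with expectation $\sum_{m}\frac{(\log m)^{j-1}}{(j-1)!\,m}$-type formulas, to conclude. Limits must be interchanged with care, since $k$ grows with $n$. To handle this, we would work with explicit finite-$n$ urn probabilities $\PROB(s_i\ge1-\delta)$ rather than a.s. limits.
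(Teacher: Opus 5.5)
Your mechanism is the paper's. You follow the path of first children $1=u_0,u_1,\dots,u_k$, compare $\mathcal{C}_n(1)-\mathcal{C}_n(u_k)=n\sum_{i\le k}(2s_i-1)$ with $\mathcal{C}_n(w)-\mathcal{C}_n(u_k)\le j(n-2)$ for $w$ at depth $j$ below $u_k$, and multiply the probability that $u_k$ carries almost all of the tree by the expected number of its descendants at depth about $k$. The paper takes $\delta=1/(2k)$, i.e.\ $s_k\ge\frac{2k-1}{2k}$, so that all depths $j\le k-1$ qualify.

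The gap is that the computation which actually constitutes the theorem is never done, and both quantitative claims you make about it are wrong.

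\emph{The probability of $E_k$.} Since $s_1\ge\dots\ge s_k$, your event is $E_k=\{s_k\ge 1-\delta\}$. Its limiting probability is $\PROB(\Gamma(k,1)\le x)$ with $x=\log\frac{1}{1-\delta}$, which lies between $e^{-x}x^k/k!$ and $x^k/k!$. So your bound $\delta^k$ is missing a factor $1/k!$. The same holds along the largest-child path, since $-\log D$ has density $1$ on $[0,\log 2]$.

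\emph{The source of the exponent $1/3$.} It has nothing to do with variance control of the profile. With $\delta=1/(2k)$ the probability is $e^{-2k\log k+O(k)}$, and the expected number of depth-$(k-1)$ descendants is $(\log n)^{k-1}/(k-1)!$. Their product $\exp\bigl((k-1)\log\log n-3k\log k+O(k)\bigr)$ is $\exp(\Omega((\log n)^{1/3}))$ for $k=a(\log n)^{1/3}$ with $a>0$ a small constant. With your $\delta^k$ and $\delta=1/(2k)$, the same bookkeeping would give exponent $1/2$. So your closing claim $k\asymp(\log n)^{1/3}$ is asserted rather than derived.

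The step you single out as the main obstacle is not an obstacle. You are bounding an expectation, and conditionally on $|(T_n,1)_{u_k\downarrow}|=m$ the subtree of $u_k$ is a \textsc{urrt} on $m$ vertices, independent of the sizes along the path. Let $X_j$ count descendants of $u_k$ at depth $j$ and set $m_0=\lceil(1-\delta)n\rceil$. Then $\EXP[X_j\mathbbm{1}_{E_k}]\ge\PROB(E_k)\,e_j(1,\frac12,\dots,\frac{1}{m_0-1})$, where $e_j$ is the elementary symmetric polynomial and $j!\,e_j\ge H_{m_0-1}^j-\binom{j}{2}\frac{\pi^2}{6}H_{m_0-1}^{j-2}$. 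No concentration or second moment is needed.

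The growth of $k$ with $n$ is also easy to treat at finite $n$; the paper simply inserts the limit law there. The first-child subtree sizes are nested discrete uniforms, and coupling with uniforms gives $\PROB(E_k)\ge\PROB\bigl(\prod_{i\le k}U_i\ge 1-\delta+k/n\bigr)$.

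Finally, your own parametrization shows that letting $\delta$ shrink with $k$ is the wasteful choice. Take $\delta$ fixed, $c=1-2\delta$ and $j=\lfloor ck\rfloor$. The product is then $\exp\bigl(ck\log\log n-(1+c)k\log k+O(k)\bigr)$, giving $\exp(\Omega((\log n)^{c/(1+c)}))$. This recovers the theorem at $\delta=1/4$ and pushes the exponent towards $1/2$ as $\delta\to0$.
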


\begin{proof} 
Consider a vertex $v$ at distance $k$ to the root. We consider the event that $|(T_n,1)_{v\downarrow}| >n(2k-1)/(2k)$. When this event occurs, it implies that all vertices in paths of length $k-1$  starting at vertex $v$ and within $(T_n,1)_{v\downarrow}$ are more central than the root.

Let $v_{1^{(k)}}$ be defined recursively as follows:  $v_{1^{(1)}}$ is the first child of the root, and vertex $v_{1^{(k+1)}}$ is the first child of the vertex $v_{1^{(k)}}$. We write $X_{k-1}^{1^{(k)}}$ for the number of paths of length $k-1$ starting at vertex $v_{1^{(k)}}$ and whose vertices are all within $(T,1)_{v_{1^{(k)}}}$. See Figure \ref{fig:closeness} for an illustration of a subtree, $(T,1)_{v_{1^{(3)}}}$.
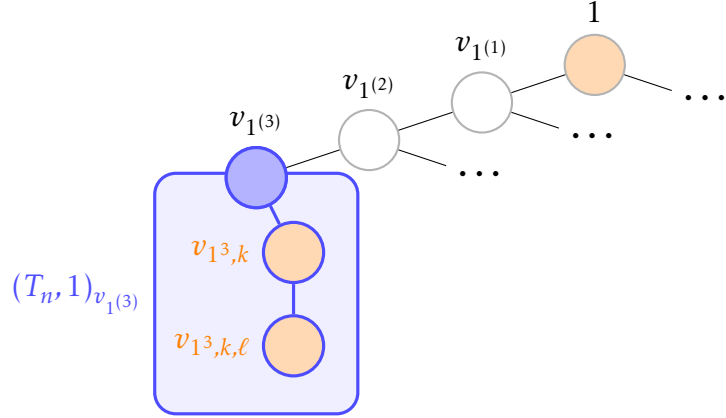
\begin{figure}[ht]
    \centering
    \begin{tikzpicture}[
  level distance=0.5cm,
  level 1/.style={sibling distance=3cm},
  level 2/.style={sibling distance=3cm},
  vertex/.style={circle, draw=gray!60, fill=white, minimum size=8mm, font=\small, thick},
  bigsubtree/.style={rounded corners=8pt, draw=blue!70, very thick, fill=blue!30, fill opacity=0.2, text opacity=1},
  smallsubtree/.style={rounded corners=8pt, draw=blue!40, very thick, fill=blue!10, fill opacity=0.1, text opacity=1},
]

\node[vertex,label={[text=orange!100]above: },fill=orange!30, fill opacity=1,label = 1] (root) {}
    child { node[vertex,label = $v_{1^{(1)}}$] (v1) {}
        child { node[vertex,label = $v_{1^{(2)}}$] (v12) {}
            child { node[vertex, fill= blue!30, fill opacity= 1, draw=blue!70, very thick,label = above : $v_{1^{(3)}}$] (v11) {} }
            child { node[font=\Large] (v13) {$\cdots$} }}
            child { node[font=\Large] (v122) {$\cdots$}}
    }
    child { node[font=\Large] (vdots) {$\cdots$} };
\node[bigsubtree, below =-0.5cm of v11, minimum width=2.7cm, minimum height=3.2cm, label={[text=blue!70]  left:{$(T_n,1)_{v_{1^{(3)}}}$ }}] (cloud) {};

\node[vertex, below right=0.4cm and -0.1cm of v11,,fill=orange!30, fill opacity=1,, draw=blue!70, very thick, label ={[text=orange!100] left: $v_{1^{3},k}$ }] (v111) {};
\draw[thick, draw=blue!70, very thick] (v11) -- (v111);
\node[vertex, below=0.4cm and 0cm of v111,,fill=orange!30, fill opacity=1,, draw=blue!70, very thick, label ={[text=orange!100]left: $v_{1^{3},k,\ell}$ }] (v1111) {};
\draw[thick, draw=blue!70, very thick] (v111) -- (v1111);
\node[vertex, fill=blue!30, fill opacity=1, draw=blue!70, very thick] at (v11) {};
\node[anchor=south east, font=\small] at (v11.north west) {};
\end{tikzpicture}

    \caption{Representation of the subtree of $v_{1^{(3)}}$ and paths of length $2$ within it.}
    \label{fig:closeness}
\end{figure}

We will show that the expected rank of the root is lower-bounded by
\begin{align}
    \EXP R_n &\ge \sum_{k=2}^n\EXP\left[X_{k-1}^{v_{1^{(k)}}} \bigg| |(T_n,1)_{v_{1^{(k)}}\downarrow}| \ge n\frac{2k-1}{2k} \right] \PROB\left(|(T_n,1)_{v_{1^{(k)}}\downarrow}| \ge n\frac{2k-1}{2k}\right) \nonumber\\
    &\ge \sum_{k=2}^{n/2}  \frac{\log ^k (n) - \gamma \log^{k-1} (n)}{k!} \PROB\left(|(T_n,1)_{v_{1^{(k)}}\downarrow}| \ge n\frac{2k-1}{2k}\right)~, \label{lowerboundRnCloseness}
\end{align}
where $\gamma > 0$ is a constant.

If $ | (T_n,1)_{v_{1^{(2)}}\downarrow}|>3n/4$ then, by the argument from the proof of Theorem \ref{ClosenessRootEstimation}, we know that any vertex attached to vertex $v_{1^{(2)}}$ is more central than the root. Therefore, the expected rank of the root is at least of order $\log(n)$. 

Next, we show that 
\[
\EXP\left[X_{k-1}^{v_{1^{(k)}}} \bigg| |(T_n,1)_{v_{1^{(k)}}\downarrow}| \ge n\frac{2k-1}{2k} \right]
\ge \frac{\log ^k (n)}{k!} +O\left(\log^{k-2}(n)\right)\]
holds for $k=3$.
It is straightforward to extend the argument to larger values of $k$. The centrality of vertex $v_{1^{(2)}}$ and that of the root can be written as 
\begin{align*}
    \mathcal{C}_n(v_{1^{(2)}}) &= \mathcal{C}_n(v_{1^{(3)}})+| (T_n,v_{1^{(2)}})_{v_{1^{(3)}}\downarrow}| - | (T_n,v_{1^{(3)}})_{v_{1^{(2)}}\downarrow}| 
\end{align*}
and 
\begin{align*}
     \mathcal{C}_n(1) &= \mathcal{C}_n(v_{1^{(3)}})+| (T_n,v_{1^{(2)}})_{v_{1^{(3)}}\downarrow}| - | (T_n,v_{1^{(3)}})_{v_{1^{(2)}}\downarrow}| + 2 \left(|(T_n,1)_{v_{1^{(2)}}\downarrow}| - |(T_n,v_{1^{(2)}})_{1 \downarrow}|  \right)~.
\end{align*}
If $ | (T_n,1)_{v_{1^{(3)}}\downarrow}|>5n/6$ then we have both 
$$
    |(T_n,v_{1^{(2)}})_{v_{1^{(3)}}\downarrow}| - | (T_n,v_{1^{(3)}})_{v_{1^{(2)}}\downarrow}| \ge \frac{2n}{3}
    $$
    and
    $$
    |(T_n,1)_{v_{1^{(2)}}\downarrow}| - |(T_n,v_{1^{(2)}})_{1 \downarrow}| \ge \frac{2n}{3}~.
$$
Hence, 
\begin{align*}
     \mathcal{C}_n(1) &\ge \mathcal{C}_n(v_{1^{(3)}})+ \frac{2n}{3} + 2 \frac{2n}{3} = \mathcal{C}_n(v_{1^{(3)}})+ 2n~.
\end{align*}
By the argument from the proof of Theorem \ref{ClosenessRootEstimation}, any vertex attached to vertex $v_{1^{(3)}}$  is more central than the root. Let $v_{1^{(3)},k}$ be a child of vertex $v_{1^{(3)}}$. Now consider any leaf $v_{1^{(3)},k,\ell}$ attached to $v_{1^{(3)},k}$ (see Figure \ref{fig:closeness}). The centrality of such vertices can be expressed as
\begin{align*}
    \mathcal{C}_n(v_{1^{(3)},k,\ell}) &= \mathcal{C}_n(v_{1^{(3)},k}) + n-2~.
\end{align*}    
Since $v_{1^{(3)},k,\ell}$ is a child of $v_{1^{(3)},k}$,
\begin{align*}
    \mathcal{C}_n(v_{1^{(3)},k}) &\le \mathcal{C}_n(v_{1^{(3)}}) +n-4 ~.
\end{align*}    
Hence, 
\begin{align*}
 \mathcal{C}_n(v_{1^{(3)},k,\ell}) \le \mathcal{C}_n(v_{1^{(3)}})+ 2n-6~.
\end{align*}
Therefore, if $ | (T_n,1)_{v_{1^{(3)}}\downarrow}|>5n/6$, all vertices in any path of length $2$ started at $v_{1^{(3)}}$ and within $(T_n,1)_{v_{1^{(3)}}\downarrow}$ are more central than the root. Hence, a lower bound on the expected rank of the root $R_n$ is the expected number of paths of length $2$ starting at vertex $v_{1^{(3)}}$ in $(T,1)_{v_{1^{(3)}}\downarrow}$ times the probability that $ | (T_n,1)_{v_{1^{(3)}}\downarrow}|>5n/6$. By definition, $X_2^{1^{(3)}}$ is the number of such paths.
\begin{align*}
    \EXP[X_2^{1^{(3)}}|&| (T_n,1)_{v_{1^{(3)}}\downarrow}|>5n/6] \ge \sum_{v=1}^{5n/6}\frac{1}{v} \sum_{k=v+1}^{5n/6}\frac{1}{k} = \frac{1}{2}\log^2(n) + O(\log n),
\end{align*}
since, conditioned on the subtree's size, the subtree itself evolves as a \textsc{urrt}. 
Similarly, one can show that for vertex $v_{1^{(k)}}$ at distance $k$ of the root, if $|(T_n,1)_{v_{1^{(k)}}\downarrow}|>n\frac{2k-1}{2k}$, then all vertices in any path of length $k-1$ starting at $v_{1^{(k)}}$ and within $(T_n,1)_{v_{1^{(k)}}\downarrow}$ are more central than the root. Hence, 
$$\EXP\left[X_{k-1}^{1^{(k)}} \bigg| |(T_n,1)_{v_{1^{(k)}}}| \ge n\frac{2k-1}{2k} \right] \ge \frac{\log^{k-1}(n)}{(k-1)!} +O\left(\log^{k-2}(n)\right)~.
$$
To lower-bound $\PROB\left(|(T_n,1)_{v_{1^{(k)}}}| \ge n \frac{2k-1}{2k}\right)$ in (\ref{lowerboundRnCloseness}), note that
\begin{align}
    \lim_{n\to\infty}\PROB\left(\frac{|(T_n,1)_{v_{1^{(k)}}}|}{n} \ge \frac{2k-1}{2k}\right)  
    &= \PROB\left(\prod_{i=1}^k U_i \ge \frac{2k-1}{2k}\right) \nonumber\\
     &= \PROB\left(\Gamma(k,1) \le \log\left(\frac{2k}{2k-1}\right)\right) \nonumber\\
     & \ge \frac{ \log(2k/(2k-1))^k /2} { k! } \nonumber\\
     & \ge \frac{ (1/(2k))^k /2} { k! } \nonumber\\
     & \ge \frac{ (e/2)^k} { 2 k^{2k} },\label{lowerPlargesubtree}
\end{align}
where the $U_i$ are independent uniform random variables on $[0,1]$ and
$\Gamma(k,1)$ is a gamma-distributed random variable,
and we used the inequality
$$
\PROB\left(\Gamma(k,1) \le x \right)
\ge \frac{x^k e^{-x}}{k!} .
$$
Combining the lower bound in (\ref{lowerPlargesubtree}) with \eqref{lowerboundRnCloseness} yields
\begin{align*}
   \EXP R_n = \exp \left( \Omega \left((\log n)^{1/3} \right) \right).
\end{align*}
\end{proof}


\subsection{Persistence}

\begin{theorem}\label{ClosenessPersistence}
  The closeness rank of the root $R_n$ is not persistent.
\end{theorem}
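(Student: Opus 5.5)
We will show that, with positive probability, $R_n\to\infty$ along the sequence; since $R_n$ is integer-valued, this rules out persistence. The event is the one used in the proof of Theorem \ref{ClosenessRootEstimation}. Let $v=v_{1^{(2)}}$ be the first child of the first child of the root, and let
\[
E=\Bigl\{\lim_{n\to\infty}|(T_n,1)_{v\downarrow}|/n>3/4\Bigr\}.
\]
By the Pólya urn convergence (Lemma \ref{betweenConvergence1}, or directly the product $U_1U_{1,1}$ of independent uniforms), the limit exists almost surely and $\PROB(E)>0$. On $E$ there is an almost surely finite random time $N$ such that $|(T_n,1)_{v\downarrow}|>3n/4$ for all $n\ge N$.

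Fix $n\ge N$ on $E$. The computation in Theorem \ref{ClosenessRootEstimation} shows that every child $w$ of $v$ satisfies $\mathcal{C}_n(w)<\mathcal{C}_n(1)$. Leaf children satisfy this by the explicit identity $\mathcal{C}_n(w)=\mathcal{C}_n(v)+n-2$. Non-leaf children are at least as central as leaf children, because $\mathcal{C}_n(w)=\mathcal{C}_n(v)+n-2|(T_n,v)_{w\downarrow}|$. Therefore
\[
R_n\ge \deg_{T_n}(v)\quad\text{for all } n\ge N.
\]

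It remains to show $\deg_{T_n}(v)\to\infty$ almost surely on $E$. On $E$ the subtree $(T_n,1)_{v\downarrow}$ has size $s_n\to\infty$, and it receives every new vertex with probability $s_n/n>3/4$. Conditionally on its size process, the subtree evolves as a \textsc{urrt}. A new vertex joining the subtree attaches to $v$ with probability $1/s_n$, so the number of children of $v$ is a sum of conditionally independent Bernoulli variables. Their parameters have divergent sum, since $\sum 1/s=\infty$ over the successive sizes $s$. By the conditional Borel--Cantelli lemma (Lévy's extension), $\deg(v)\to\infty$ almost surely; in fact it grows like $\log n$. Hence $R_n\to\infty$ on $E$, so $R_n$ cannot be eventually constant with a finite limit, and $\PROB(R_n \text{ persistent})\le 1-\PROB(E)<1$.

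The main obstacle is the ordering step $R_n\ge\deg(v)$. We have to check carefully that the strict inequality $\mathcal{C}_n(w)<\mathcal{C}_n(1)$ holds uniformly for all $n\ge N$. This reduces to $|(T_n,1)_{v\downarrow}|-|(T_n,v)_{1\downarrow}|>n/2-1$, which holds once the subtree fraction exceeds $3/4$. Ties under the pessimistic rule do not matter here, because the inequality is strict. A secondary point is that conditioning on $E$ concerns a tail limit. We handle this by working almost surely on $E$ rather than conditioning: the degree divergence holds almost surely on the whole event where $s_n\to\infty$, and this event contains $E$.
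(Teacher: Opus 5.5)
Your proof is correct and rests on the same idea as the paper: on the positive-probability event that the subtree of $v_{1^{(2)}}$ eventually exceeds $3n/4$, every child of $v_{1^{(2)}}$ is strictly more central than the root, and $\deg_{T_n}(v_{1^{(2)}})\to\infty$. The paper gets the degree divergence through a first Borel--Cantelli argument along $n=2^m$, using a moment bound on $\sum_j \mathrm{Ber}(1/j)$ and conditioning on the exact subtree size. Your pathwise bound $R_n\ge \deg_{T_n}(v)$ for all $n\ge N$ on $E$, together with almost-sure divergence of the degree, is cleaner and gives $R_n\to\infty$ along the full sequence.
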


\begin{proof}
  We show that, for any fixed positive integer $k$, with probability one, only finitely many of the events
  $\{R_{n}\le k\} \cap \{|(T_n,1)_{11\downarrow}|>3n/4\}$ occur.
\begin{align*}
\PROB(R_{n}\le k \ \text{and} \  |(T_n,1)_{v_{1^{(2)}}\downarrow}|>3n/4) & \le\PROB\left( R_{n}\le k \Big{|} |(T_n,1)_{v_{1^{(2)}}\downarrow}|>3n/4\right) \\
    &\le \PROB\left(\text{deg}_n(v_{1^{(2)}})\le k \Big{|} |(T_n,1)_{v_{1^{(2)}}\downarrow}|>3n/4\right) \\
    &\le \PROB\left(\text{deg}_n(v_{1^{(2)}})\le k \Big{|} |(T_n,1)_{v_{1^{(2)}}\downarrow}|=3n/4\right) .
\end{align*}
Conditionally on the event $ |(T_n,1)_{v_{1^{(2)}}\downarrow}|=3n/4$, the subtree $(T_n,1)_{v_{1^{(2)}}\downarrow}$ is a random recursive tree rooted at vertex $v_{1^{(2)}}$ with $3n/4$ vertices. Hence, the distribution of the degree of vertex $v_{1^{(2)}}$ is a sum of independent Bernoulli random variables. Therefore, considering the subsequence $\{2^n\}_{n=1}^{\infty}$ and using a fourth central moment inequality, we have
\begin{align*}
    \sum_{n=1}^{\infty }&\PROB\left(R_{2^n}\le k \Big| | (T_{2^n},1)_{v_{1^{(2)}}\downarrow}|>\frac{3}{4}2^n\right)\le  \sum_{n=1}^{\infty } \PROB \left(\sum _{j=1}^{3 \cdot 2^n/4} Ber(1/j) \le k  \right) < \infty.
\end{align*}

Hence, by the Borel-Cantelli lemma, along the subsequence $\{2^n\}_{n=1}^{\infty}$ almost surely only finitely many of the events $\{R_{n}\le k\} \cap \{|(T_n,1)_{v_{1^{(2)}}\downarrow}|>3n/4\}$ occur. 

Since
\begin{equation*}
\inf_{n>0} \PROB\left(
|(T_{n},1)_{v_{1^{(2)}}\downarrow}|>3n/4
\right) > 0~,
\end{equation*}
and $|(T_n,1)_{v_{1^{(2)}}\downarrow}|/n$ is a bounded sub-martingale, we have that 
\begin{equation*}
    \PROB \left(\cup_N \cap_{n\ge N} \left[ (T_n,1)_{v_{1^{(2)}}\downarrow}|\ge 3n/4 
    \right] \right)>0~.
\end{equation*}
So, the rank of the root is not persistent.
\end{proof}

\printbibliography


\end{document}